\documentclass[10pt]{article}

\RequirePackage{amsmath}
\RequirePackage{amsfonts}
\RequirePackage{amssymb}
\RequirePackage{hyperref}
\RequirePackage{multirow}
\RequirePackage{algorithm}
\RequirePackage{algpseudocode}
\RequirePackage{graphicx}
\RequirePackage{xcolor}
\RequirePackage{amsthm}
\usepackage{cleveref}
\usepackage{booktabs}
\allowdisplaybreaks

\usepackage{dpr_fec,ifthen,dsfont}

\newtheorem{definition}{Definition}[section]
\newtheorem{theorem}{Theorem}[section]
\newtheorem{assumption}{Assumption}[section]
\newtheorem{corollary}[theorem]{Corollary}
\newtheorem{lemma}[theorem]{Lemma}

\newtheorem{remark}[definition]{Remark}

\usepackage{isetechreport}
\def\reportyear{26T}
\def\reportno{019}
\def\revisionno{0}
\def\originaldate{\today}
\def\revisiondate{\today}
\coraltrue
\cvcrfalse

\begin{document}

\title{A Stochastic Proximal-Gradient Method for Regularized Generally Constrained Continuous Optimization


}

\author{Frank E.~Curtis\thanks{E-mail: frank.e.curtis@lehigh.edu}}
\author{Xiaoyi Qu\thanks{E-mail: xiq322@lehigh.edu}}
\author{Daniel P.~Robinson\thanks{E-mail: daniel.p.robinson@lehigh.edu}}
\affil{Department of Industrial and Systems Engineering, Lehigh University}
\titlepage

\maketitle

\begin{abstract}
  A stochastic proximal-gradient method is proposed and analyzed for solving regularized optimization problems subject to nonlinear equality constraints and nonnegativity bounds. The objective function's values and gradients are available only through inexact stochastic oracles. The method decomposes each trial step into one component that reduces linearized infeasibility and a second component that seeks optimality while preserving that reduction. Trial steps are accepted or rejected through a relaxed sufficient decrease condition subject to stochastic merit function evaluations. Without assuming unbiasedness of stochastic gradient estimates, it is proved under reasonable assumptions that the method reaches an $\epsilon$-KKT point within $\mathcal{O}(\epsilon^{-2})$ iterations with high probability. This guarantee further implies almost-sure termination in finitely many iterations and an $\mathcal{O}(\epsilon^{-2})$ bound on the expected iteration complexity.
\end{abstract}

\newcommand{\tauktrial}{\tau_{k,\text{trial}}}
\newcommand{\tautrialmin}{\tau_{\text{min,trial}}}
\newcommand{\xtrial}{x^{\text{trial}}}
\newcommand{\finf}{f_{\text{inf}}}
\newcommand{\phibar}{\bar\phi}
\newcommand{\alphamin}{\alpha_{\min}}
\newcommand{\taumin}{\tau_{\min}}
\newcommand{\sigmamin}{\sigma_{\min}}
\newcommand{\sigmamax}{\kappa_J}
\newcommand{\lipg}{L_{g}}
\newcommand{\lipJ}{L_{J}}
\newcommand{\kappag}{\kappa_{\nabla f}}
\newcommand{\kappagr}{\kappa_{\partial r}}
\newcommand{\kappac}{\kappa_{c}}
\newcommand{\kappav}{\kappa_{v}}
\newcommand{\kappaJ}{\kappa_J}
\newcommand{\kappaPhi}{\kappa_{\Phi}}
\newcommand{\sigmabar}{\bar\sigma}
\newcommand{\gammabar}{\bar\gamma}
\newcommand{\epsilonbar}{\bar\epsilon}
\newcommand{\taubar}{\bar\tau}
\newcommand{\taubarmin}{\taubar_{\min}}
\newcommand{\xstar}{x_*}
\newcommand{\ystar}{y_*}

\newcommand{\Itrue}{I_k^{\mathrm{true}}}
\newcommand{\Isucc}{I_k^{\text{succ}}}
\newcommand{\Ia}{I_k^{\Acal}}
\newcommand{\Ibara}{\bar{I}_k^{\Acal}}
\newcommand{\Proj}{\operatorname{Proj}}
\newcommand{\xq}[1]{{\color{magenta} #1}}

\newcommand{\uktrue}{u_k^{\mathrm{true}}}
\newcommand{\Uktrue}{U_k^{\mathrm{true}}}
\newcommand{\sktrue}{s_k^{\mathrm{true}}}
\newcommand{\Sktrue}{S_k^{\mathrm{true}}}
\newcommand{\yktrue}{y_k^{\mathrm{true}}}
\newcommand{\Yktrue}{Y_k^{\mathrm{true}}}
\newcommand{\zktrue}{z_k^{\mathrm{true}}}
\newcommand{\Zktrue}{Z_k^{\mathrm{true}}}
\newcommand{\grktrue}{g_{r,k}^{\text{true}}}
\newcommand{\Grktrue}{G_{r,k}^{\text{true}}}

\newcommand{\alphabar}{\bar{\alpha}}

\section{Introduction}\label{sec.introduction}

In this paper, we consider the constrained continuous optimization problem
\begin{equation}\label{prob:main}
  \min_{x\in\R{n}} \ f(x) + r(x) \ \  \text{subject to} \ \ c(x) = 0,\ \ x \in\Omega := \{x\in\R{n}:x\geq 0\},
\end{equation}
where $f : \R{n} \to \R{}$ defined by $f(x):=\E_{\Xi\sim \Pcal}[f(x,\Xi)]$ for all $x \in \R{n}$ is continuously differentiable, the regularization function $r: \R{n} \to \R{}$ is convex, and the equality constraint function $c : \R{n} \to \R{m}$ has $m\leq n$ and is continuously differentiable.  Here, the random variable~$\Xi$ is defined by a probability space $(\Ecal, \Fcal, \Pcal)$, where $\Ecal$ is a sample space, $\Fcal$ is a $\sigma$-algebra of measurable subsets of the sample space, and $\Pcal: \Fcal \to [0,1]$ is a probability measure. For each $x\in \R{n}$, the function $f(x,\cdot): \Ecal \to \R{}$ is measurable with respect to the $\sigma$-algebra $\Fcal$ on $\Ecal$. We presume that the expectation in the definition of $f$ is approximated using samples drawn from $\Xi$; i.e., we assume the objective $f$ and its derivatives are not known and must be estimated through sampling.  Problems of the form~\eqref{prob:main} arise naturally in a wide range of applications, including stochastic programming~\cite{bertsimas2018robust,shapiro2021lectures}, model compression~\cite{chen2018constraint}, and constrained deep learning~\cite{tomar2014manifold,zhu2019physics}, among others. For a comprehensive list of applications, we refer readers to~\cite{idrees2025constrained,jin2022stochastic,shi2025momentum} and the references therein.

Recently, several algorithms have been proposed for solving problem~\eqref{prob:main}, including augmented Lagrangian methods~\cite{jin2022stochastic,shi2025momentum}, a proximal-point method~\cite{boob2023stochastic}, a proximal-gradient method~\cite{boob2025level}, and a successive convex approximation method~\cite{idrees2025constrained}. Their theoretical guarantees have been established in terms of expected iteration complexity, i.e., the number of iterations required for a stationarity measure to fall below a prescribed accuracy (in expectation). Results in expectation, however, do not control tail events: the iterations required to reach an approximate stationary point can still be arbitrarily large with nonzero probability. To address this limitation, a separate line of work~\cite{berahas2025sequential,cutkosky2021high,gorbunov2024high,jain2019making,li2020highpro,sadiev2023high} has developed high-probability iteration complexity bounds that characterize the number of iterations needed to reach an approximate stationary point with a prescribed failure probability. These guarantees have been established in a variety of settings, including nonconvex stochastic optimization~\cite{cutkosky2021high,jain2019making,li2020highpro}, nonsmooth stochastic optimization~\cite{gorbunov2024high}, equality constrained stochastic optimization~\cite{berahas2025sequential}, and stochastic variational inequality problems~\cite{sadiev2023high}. To the best of our knowledge, however, no previously existing method for problem~\eqref{prob:main} provides high-probability iteration complexity guarantees.



Another limitation of existing high-probability analyses~\cite{gorbunov2024high,jain2019making,liu2024high} is the reliance on a stochastic gradient unbiasedness assumption. This excludes without-replacement schemes such as random reshuffling~\cite{gurbuzbalaban2021random}, which are standard in practice but yield biased mini-batch gradients within an epoch~\cite{mishchenko2020random,nguyen2018sgd}. Together, these limitations motivate the development of an algorithm for solving~\eqref{prob:main} that achieves high-probability iteration complexity guarantees without having to assume that the stochastic gradient estimator is unbiased.

In this paper, we design and analyze a proximal-gradient method for solving problem~\eqref{prob:main}. Our method can be seen as an extension of the proximal-gradient methods in~\cite{curtis2025proximal,dai2025proximal} to the setting where the objective function and its gradient are inaccessible and must be estimated, or as an extension of the step-search method~\cite{jin2024high} to stochastic optimization problems with both nonsmooth regularization and general constraints. The proposed method employs a step decomposition strategy and uses a sufficient decrease condition for a chosen merit function to determine step acceptance or rejection. The algorithm features two key aspects.
\begin{itemize}
    \item We propose a new stochastic first-order oracle (see Assumption~\ref{ass:stochastic_first_order_oracle}) in which the required gradient-estimation accuracy is governed by both a stationarity measure and the constraint violation.
    \item To address the challenges arising from noise in objective function evaluations, we employ a relaxed sufficient decrease condition similar to~\cite{jin2024high}. Under reasonable assumptions, we prove that, with high probability, the proposed algorithm generates an  $\epsilon$-KKT point within $\Ocal(\epsilon^{-2})$ iterations. Furthermore, the high-probability result implies both an almost-sure termination guarantee (Corollary~\ref{cor:convergence_expected_complexity}(i)) and an  expected iteration complexity bound (Corollary~\ref{cor:convergence_expected_complexity}(ii)).
\end{itemize}

Before proceeding, it is prudent that we clarify the contributions of this paper relative to two other papers in the literature.
\begin{itemize}
\item The problem studied in~\cite{curtis2025proximal} is similar to our problem~\eqref{prob:main} but they assume that the function $f$ and its gradient are known exactly. To address inexactness, we use a modified line search and an entirely different analysis (see next bullet).  
\item The problem studied in~\cite{jin2024high} is an \emph{unconstrained} optimization problem where the objective function is assumed to be differentiable, and possibly nonconvex.  Our problem~\eqref{prob:main} is distinct since it allows for the use of a nonsmooth regularization function and the presence of constraints. The overall framework for analyzing their method is the same framework that we adopt. However, since the two papers consider very different optimization problems, and the optimization algorithms analyzed are completely different, we overcome various challenges in our analysis that were not an issue in \cite{jin2024high}, such as needing to define an appropriate first-order oracle that takes constraint infeasibility into account and defining and analyzing an appropriate stopping time that considers both the constraints and nonsmooth regularizer.
\end{itemize}

\subsection{Organization}
In Section~\ref{sec.algorithm}, we provide the description of the proximal-gradient algorithm. In Section~\ref{sec.analysis}, we derive a high-probability iteration complexity result, which implies both an almost-sure termination result and expected iteration complexity result. Finally, we give concluding remarks in Section~\ref{sec.conclusion}.

\subsection{Notation}
Let $\R{}$ denote the set of real numbers, $\R{n}$ the set of $n$-dimensional real vectors, and $\N{}$ the set of natural numbers. For any $a \in \R{}$, let $\R{}_{>a}$ ($\R{}_{\ge a}$) denote the set of real numbers strictly larger than (larger than or equal to) $a$. We use $k \in \N{}$ as the iteration counter of the algorithm. Given a matrix $A$, we let $\sigmamin(A)$ denote the smallest singular value of $A$. Finally, we let $\text{exp}(a) := e^a$. 

\subsection{Standing assumption}

The following assumption is made throughout the paper. 

\bassumption\label{ass:general}
Let $\mathcal{X} \subseteq \R{n}$ be an open convex set containing every realization of the iterate, iterate plus normal step, and iterate plus full step (i.e., trial iterate) sequences of the algorithm. 
The objective function $f : \R{n} \rightarrow \R{}$ is continuously differentiable and bounded below over $\mathcal{X}$. The objective gradient $\nabla f : \R{n} \rightarrow \R{n}$ is Lipschitz continuous over $\mathcal{X}$. The regularization function $r : \R{n} \to \R{}$ is convex, Lipschitz continuous, and bounded below over $\mathcal{X}$. The constraint function $c : \R{n} \rightarrow \R{m}$ is continuously differentiable and bounded over $\mathcal{X}$, and constraint gradient $\nabla c_i : \R{n} \rightarrow \R{n}$ for all $i \in \{1,\ldots,m\}$ is Lipschitz continuous and bounded over $\mathcal{X}$.
\eassumption

Under Assumption~\ref{ass:general}, there exist constants $(f_{\inf}, r_{\inf}, \kappa_{\partial r},\kappa_{c},\kappa_{J}, L_g, L_J) \in \R{} \times \R{} \times \R{}_{> 0} \times \R{}_{>0} \times \R{}_{> 0} \times \R{}_{> 0} \times \R{}_{> 0}$ such that for all $x \in \Xcal$, one has
\begin{equation}\label{eq:consequence-of-basic-assumption}
  f(x) \geq f_{\inf},\ r(x) \geq r_{\inf},\ d_H(\partial r(x),0) \leq \kappa_{\partial r},\ \|c(x)\|_2 \leq \kappa_c,\ \text{and}\ \|\nabla c(x)\|_2 \leq \kappa_J
\end{equation}
where $d_H(\partial r(x),0)$ denotes the $\ell_2$-norm Hausdorff distance between the set $\partial r(x)$ and the origin set $\{0\}$, and for all $(x, \xbar) \in \Xcal \times \Xcal$, one has
\begin{align}\label{def:Lip-constants}
    \|\nabla f(x) - \nabla f(\xbar)\|_2 \leq L_g \|x-\xbar\|_2 \ \ \text{and} \ \ \|\nabla c(x)^T - \nabla c(\xbar)^T\|_2 \leq L_J \|x-\xbar\|_2.
\end{align}
All parameters used in this paper (including those  above) are gathered in Table~\ref{tab:parameters}.

\begin{table}[htbp]
  \centering
  \caption{Parameters and where they are defined.}
  \label{tab:parameters}
  \begin{tabular}{cc|cc|cc}
    \toprule
    Param. & Def. & Param. & Def. & Param. & Def. \\
    \midrule
     $\Xcal$ & Ass.~\ref{ass:general}  &
     $f_{\inf}$, $r_{\inf}$, $\kappa_{\partial r}$, $\kappa_{c}$, $\kappa_{J}$  & \eqref{eq:consequence-of-basic-assumption} &
     $L_g$, $L_J$  & \eqref{def:Lip-constants}    \\
     $\gamma$, $\epsilonbar_f$  & Alg.~\ref{alg:main} &
     $\kappa_G$  & Ass.~\ref{ass:kappaG}   &
     $\omega_1$, $\omega_2$, $\kappa_T$, $\alphabar$ & Def.~\ref{def:stoppingtime} \\
     $\kappav$ & \eqref{subprob:trust-region} &
     $\sigmamin$ & Ass.~\ref{ass:strong-licq} &
     $\kappa_1$ & Lem.~\ref{lem:lin-infeas-reduction} \\
     $\eta$, $\ibar$, $\theta$  & Def.~\ref{def:stoppingtime} &
     $\nu$, $b$, $\epsilon_f$ & Ass.~\ref{ass:zeroth_order_oracle} &
     $\epsilon_g$, $\kappa$, $\delta$ & Ass.~\ref{ass:stochastic_first_order_oracle} \\
     $u_f$, $p$ & Ass.~\ref{ass:p} &
    $\eta_1$ & \eqref{def:tauktrial}  &
    $\Wbar_0$, $\that$ & Thm.~\ref{thm:main} \\
    $\epsilon_\tau$ & \eqref{eq:tau-update} &
    $\tau_{\min}$ & Lem.~\ref{lem:merit-bounded} &
     &  \\
    \bottomrule
  \end{tabular}
\end{table}

\section{Algorithm}\label{sec.algorithm}

Our proposed method is stated as Algorithm~\ref{alg:main} on page~\pageref{alg:main}. For ease of development, we state the algorithm in the context of a realized iterate sequence $\{x_k\}$ and the computations that are performed for each $k$. In the following section, we analyze the algorithm with respect to the stochastic process that it defines.

During the $k$th iteration, a trial step $s_k = v_k + u_k$ is computed, where $v_k$ reduces linearized infeasibility of the equality constraints while maintaining feasibility with respect to $\Omega$, and $u_k$ aims to reduce the objective function while preserving the gains achieved by $v_k$. We then decide whether to accept the new trial iterate $x_k + s_k$ by considering the decrease in a noisy estimate of a merit function.


Given the $k$th iterate $x_k$ and proximal parameter $\alpha_k$, the normal step $v_k$ is computed as an approximate solution to the trust-region subproblem
\begin{equation}\label{subprob:trust-region}
  \min_{v \in \R{n}} m_k(v)\ \ \text{subject to}\ \ \|v\|_2 \leq \kappa_v \alpha_k \delta_k, \ \ x_k + v \in \Omega,
\end{equation}
where $m_k(v) := \tfrac{1}{2}\|c(x_k) + \nabla c(x_k)^T v\|_2^2$, $\kappa_v \in \R{}_{>0}$ is a user-defined constant, and   
\begin{equation}\label{eq:delta}
  \delta_k
  := \|\mathrm{Proj}_{T_\Omega(x_k)}(-\nabla c(x_k) c(x_k))\|_2.
\end{equation}
Here, $T_\Omega(x_k)$ is the tangent cone to $\Omega$ at $x_k$ (see \cite[Definition~2.4]{curtis2025proximal} for details) and $\text{Proj}_{T_\Omega(x_k)}(\cdot)$ is the orthogonal projection operator onto the set $T_\Omega(x_k)$. An approximate solution $v_k$ to subproblem~\eqref{subprob:trust-region} is required to satisfy
\begin{equation}\label{subprob:trust-region_conditions}
  \|v_k\|_2 \leq \kappa_v \alpha_k \delta_k,
  \quad x_k + v_k \in \Omega,
  \quad \text{and} \quad
  m_k(v_k) \leq m_k(v_k^c),
\end{equation}
where $v_k^c$ denotes the Cauchy point of subproblem~\eqref{subprob:trust-region} (see \cite[Section~3]{curtis2025proximal}). Throughout the paper, the computation of $v_k$ is assumed to satisfy the following.
\begin{assumption}~\label{ass:vk}
Given an iterate $x_k$ and proximal parameter $\alpha_k$, a deterministic procedure is used to compute a normal step $v_k$ satisfying~\eqref{subprob:trust-region_conditions}. 
\end{assumption}
Given the $k$th iterate $x_k$, proximal parameter $\alpha_k$, normal step $v_k$, and gradient estimate $g_k \approx \nabla f(x_k)$, the step $u_k$ is computed as the unique solution to
\begin{equation}\label{subprob:tangential}
  \min_{u \in \R{n}}\ \ q_k(v_k + u)\ \ \text{subject to}\
\ \nabla c(x_k)^T u = 0,\ \ x_k + v_k + u \in\Omega,
\end{equation}
with $q_k(v_k + u) := g_k^T(v_k + u) + \tfrac{1}{2\alpha_k} \|v_k + u\|_2^2 + r(x_k + v_k + u)$, where the constraint $\nabla c(x_k)^T u = 0$ ensures that $u_k$ lies in the null space of $\nabla c(x_k)^T$, so any gain in linearized feasibility achieved by $v_k$ is preserved.  From the optimality conditions for~\eqref{subprob:tangential}, there exist $g_{r,k} \in \partial r(x_k + v_k + u_k)$, $y_k \in \R{m}$, and $z_k \in \R{n}$ such that
\begin{subequations}\label{kkt:u}
\begin{align}
g_k + \tfrac{1}{\alpha_k}u_k + \tfrac{1}{\alpha_k}v_k + g_{r,k} + \nabla c(x_k) y_k + z_k &= 0, \label{kkt:stationary} \\
\nabla c(x_k)^T u_k &= 0, \ \ \text{and} \label{kkt:nullspace} \\
\|\min\{x_k + v_k + u_k,\, -z_k\}\|_2 &= 0. \label{kkt:complementarity}
\end{align}
\end{subequations}
We write $s_k = v_k + u_k$ for the trial step. The next assumption complements Assumption~\ref{ass:vk} and concerns the computation of $g_{r,k}$, $y_k$, and $z_k$.

\begin{assumption}\label{ass:grk-deterministic}
  Given an iterate $x_k$, proximal parameter $\alpha_k$, and normal step $v_k$ satisfying Assumption~\ref{ass:vk}, a deterministic procedure is used to compute the vectors $u_k$, $g_{r,k}\in\partial r(x_k+v_k+u_k)$, $y_k$, and $z_k$ satisfying~\eqref{kkt:u}.
\end{assumption}

Upon computation of the aforementioned quantities, an update is considered in the value of a merit parameter. Toward this end, the algorithm first sets, for a user-defined parameter $\eta_1 \in (0,1/3)$, the quantity
\begin{equation}\label{def:tauktrial}
  \tau_{k,\text{trial}} \gets \left \{\begin{array}{ll} \infty & \text{if } q_k(s_k) - q_k(0) \leq 0, \\ \tfrac{(1-2\eta_1)(\|c(x_k)\|_2 - \|c(x_k) + \nabla c(x_k)^T s_k\|_2)}{q_k(s_k) - q_k(0)} & \text{otherwise,} 
\end{array} \right.
\end{equation}
and then computes, for some $\epsilon_\tau\in(0,1)$, the value of the $k$th merit parameter as
\begin{equation}\label{eq:tau-update}
\tau_k \gets
\begin{cases}
   \tau_{k-1} & \text{if } \tau_{k-1} \leq \tau_{k,\text{trial}}, \\
   \min\{(1-\epsilon_{\tau})\tau_{k-1},  \tau_{k,\text{trial}}\} & \text{otherwise.} 
\end{cases}
\end{equation}
Subsequently, the algorithm estimates whether the trial iterate $x_k+s_k$ decreases the merit function
\begin{equation}\label{eq:merit_function-repeat}
  \phi(x,\tau) = \tau\big( f(x) + r(x)\big) + \|c(x)\|_2,
\end{equation}
which balances the objective function with violation of the equality constraints. Since exact evaluation of $f$ is assumed to be impossible, the algorithm computes independent estimates $f_k \approx f(x_k)$ and $f_k^+ \approx f(x_k+s_k)$, which in turn allows it to compute $\phi_k \approx \phi(x_k,\tau_k)$ and $\phi_k^+ \approx \phi(x_k+s_k,\tau_k)$.  The trial step acceptance test in Line~\ref{line:sufficient-decreasing} compares the change in the stochastic merit function values against a certain predicted model decrease, with a tolerance of $2\epsilonbar_f$ added to account for the noise in the estimates of $f$. If the test passes, the trial step is accepted and the value of the proximal parameter is increased by a factor $\gamma^{-1}$ subject to imposing that it never grows larger than $\alpha_0$; otherwise, the iterate remains unchanged and the value of the proximal parameter is reduced by a factor $\gamma$.

\begin{algorithm}[h]
  \caption{Stochastic Proximal-Gradient Method}\label{alg:main}
  \begin{algorithmic}[1]
    \State \textbf{Input:} $x_0 \in \Omega$, $(\alpha_0,\tau_{-1}) \in (0,1]^2$, $\epsilonbar_f \in \R{}_{>0}$, $\eta_1\in (0, 1/3)$, and $(\gamma,\epsilon_\tau) \in (0,1)^2$.
    \For{$k = 0, 1, 2, \ldots$}
      \State Compute $v_k$ satisfying~\eqref{subprob:trust-region_conditions} as an approximate solution to subproblem~\eqref{subprob:trust-region}.
      \State Generate an estimate $g_k\approx \nabla f(x_k)$.
      \State Compute $u_k$ as the unique solution to subproblem~\eqref{subprob:tangential}.
      \State Set $s_k \gets v_k + u_k$.
      \State Set $\tau_k$ using~\eqref{eq:tau-update}.\label{line:merit-parameter}
      \State Set $x_k^+ \gets x_k + s_k$.
      \State Compute estimates $f_k\approx f(x_k)$ and $f_k^+ \approx f(x_k^+)$.
      \State Set $\phi_k \gets \tau_k\bigl[f_k + r(x_k)\bigr] + \|c(x_k)\|_2$ and $\phi_k^+ \gets \tau_k\bigl[f_k^+ + r(x_k^+)\bigr] + \|c(x_k^+)\|_2$. \label{line:phi}
      \If{$\phi_k^+ - \phi_k
            \leq -\eta_1\!\left[
              \tau_k \frac{\|s_k\|_2^2}{2\alpha_k}
              + (\|c(x_k)\|_2 - \|c(x_k) + \nabla c(x_k)^T v_k\|_2)
            \right] + 2\epsilonbar_f$}~\label{line:sufficient-decreasing}
        \State Set $x_{k+1} \gets x_k^+$ and $\alpha_{k+1} \gets \min\{\alpha_0, \gamma^{-1}\alpha_k\}$.
      \Else
        \State Set $x_{k+1} \gets x_k$ and $\alpha_{k+1} \gets \gamma \alpha_k$.
      \EndIf
    \EndFor
  \end{algorithmic}
\end{algorithm}

\section{Analysis}\label{sec.analysis}

We now commence our analysis of Algorithm~\ref{alg:main}. In Section~\ref{sec:stochastic-process}, we describe the stochastic process defined by Algorithm~\ref{alg:main} along with related assumptions and definitions, and then analyze a stopping time for the process in Section~\ref{sec:analyze-the-stochastic-process}.

\subsection{Stochastic Process and Related Assumptions and Definitions}\label{sec:stochastic-process}

Algorithm~\ref{alg:main} uses estimates $g_k \approx \nabla f(x_k)$, $f_k \approx f(x_k)$, and $f_k^+ \approx f(x_k+s_k)$ during each iteration $k$. Therefore, to analyze the algorithm we must first describe how these estimates are computed and the assumptions we make about these estimates, and then introduce notation that describes the overall stochastic process. 

We begin by defining the stochastic process primarily in terms of quantities that are actually computed by the algorithm, and  later introduce additional sequences of random variables that are defined by the process that are useful for analyzing it. For each $k\in\N{}$, let $M_k =(\Xi_{k,g},\Xi_{k,f},\Xi_{k,f}^+)$ be a triple of independent random variables each with the same distribution as $\Xi$ used to define  $f(\cdot)$.  From this triple, we presume that the algorithm makes use of samples $(\xi_{k,g},\xi_{k,f},\xi_{k,f}^+)$ and defines
\begin{equation}\label{def:gk}
    g_k := \nabla f(x_k,\xi_{k,g}), \ \ f_k := f(x_k,\xi_{k,f}), \ \ \text{and} \ \ f_k^+ := f(x_k+s_k,\xi_{k,f}^+). 
\end{equation}
We define the associated estimation errors as
\begin{equation}\label{def:ek}
e_k := |f_k - f(x_k)| \ \ \text{and} \ \  e_k^+ := |f_k^+ - f(x_k^+)|.
\end{equation}
For the analysis only, it will be convenient to define the following quantities.

\begin{definition}[True primal-dual solution]\label{def:true-quantities}
  For each $k\in\N{}$, given $x_k$ and $v_k$, the true primal-dual solution to~\eqref{subprob:tangential}, i.e., the one obtained when $g_k$ is replaced by $\nabla f(x_k)$, is $(\uktrue,\grktrue,\yktrue,\zktrue)$, and in addition $\sktrue := v_k + \uktrue$.
\end{definition}

Associated with $\{M_k\}_{k\geq 0}$,  we have  filtrations $\{\Fcal_k\}_{k \ge 0}$ and $\{\Fcal_{k+1/3}\}_{k \ge 0 }$ where 
\begin{align*}
\Fcal_0 &= \sigma(x_0,\alpha_0), &
\Fcal_k &= \sigma(x_0,\alpha_0,M_0, M_1, \ldots, M_{k-1}) \ \ \text{for $k\geq 1$,}  \\ 
\Fcal_{1/3} &= \sigma(x_0,\alpha_0,\Xi_{0,g}), &  
\Fcal_{k+1/3} &= \sigma(x_0,\alpha_0,M_0, M_1, \ldots, M_{k-1},\Xi_{k,g}) \ \ \text{for all $k\geq 1$,}
\end{align*}
where $x_0$ is the initial iterate (presumed fixed), $\alpha_0$ is the initial proximal parameter (also fixed), and $\sigma(\cdot)$ is the sigma algebra generated by its arguments.
%
%
%
%
Overall, we analyze the stochastic process defined, for all $k \in\N{}$, by the random variables 
\begin{align*}
  \{X_k, \Delta_k, V_k^c, V_k, A_k\}, &\ \text{which are $\Fcal_k$-measurable,} \\
  \{G_k, U_k, \Uktrue, S_k, \Sktrue, Y_k, &\Yktrue, Z_k, \Zktrue, G_{r,k}, \Grktrue, \Tcal_{k,\rm trial}, \Tcal_k, X_k^+\}, \\
  &\ \text{which are $\Fcal_{k+1/3}$-measurable, and}  \\
  \{\Phi_k, \Phi_k^+, F_k, F_k^+,E_k, E_k^+\}, &\ \text{which are $\Fcal_{k+1}$-measurable,}
\end{align*}
where for each $k \in \N{}$ these quantities have the aforementioned realizations
\begin{align*}
&\{x_k, \delta_k, v_k^c, v_k, \alpha_k\}, \\
&\{g_k, u_k, \uktrue, s_k,\sktrue,y_k, \yktrue, z_k, \zktrue, g_{r,k},  \grktrue, \tau_{k,\text{trial}}, \tau_k, x_k^+\}, \ \text{and}\\
&\{\phi_k, \phi_k^+,f_k,f_k^+,e_k,e_k^+\}.
\end{align*} 


We make the following pair of assumptions. For all $k \in \N{}$, we write $\P_\bullet[\cdot | \Fcal_k]$ to denote probability of event $\cdot$ with respect to the distribution of the random variable $\bullet$ conditioned on $\Fcal_k$, and similarly define $\E_\bullet[\cdot | \Fcal_k]$ as the expected value of $\cdot$ with respect to the distribution of the random variable $\bullet$ conditioned on $\Fcal_k$.

\begin{assumption}[Stochastic zeroth-order oracle]\label{ass:zeroth_order_oracle}
There exists $(\nu,b,\epsilon_f) \in \R{3}_{>0}$ such that, for all $k \in \N{}$, the estimation error at $X_k$ satisfies
\begin{equation}\label{eq:subexp-E}
\begin{aligned}
    \E_{\Xi_{k,f}}[E_k | \Fcal_k]  &\leq \epsilon_f
    \quad \text{and} \\ 
    \E_{\Xi_{k,f}}\!\left[
      \exp\!\left\{
        \lambda\bigl(E_k - \E_{\Xi_{k,f}}[E_k|\Fcal_k]\bigr)
      \right\} | \Fcal_k
    \right]
    &\leq \exp\!\(\frac{\lambda^2 \nu^2}{2}\) \ \ \text{for all} \ \ \lambda \in \left[0, \frac{1}{b}\right],
\end{aligned}
\end{equation}
and the estimation error at $X_k^+$ satisfies 
\begin{equation}\label{eq:subexp-E+}
\begin{aligned}
    \E_{(\Xi_{k,g},\Xi_{k,f}^+)}[E_k^+ | \Fcal_k] 
    \leq&\ \epsilon_f
    \quad \text{and} \\ 
    \E_{(\Xi_{k,g},\Xi_{k,f}^+)}\!\left[
      \exp\!\left\{
        \lambda\bigl(E_k^+ - \E_{(\Xi_{k,g},\Xi_{k,f}^+)}[E_k^+|\Fcal_k]\bigr)
      \right\} \Big| \Fcal_k
    \right]
    \leq&\ \exp\!\(\frac{\lambda^2 \nu^2}{2}\) \\ &\ \text{for all} \ \ \lambda \in \left[0, \frac{1}{b}\right].
  \end{aligned}
  \end{equation}  
\end{assumption}


\begin{assumption}[Stochastic first-order oracle]\label{ass:stochastic_first_order_oracle}
  There exists $(\epsilon_g, \kappa) \in \R{2}_{>0}$ and $\delta\in[0,1/2)$ such that,
  for all $k\in\N{}$, the following inequality holds:
    \begin{align}
    \P_{\Xi_{k,g}} \Big[
      & \|G_k - \nabla f(X_k)\|_2 \label{eq:first_order_accuracy} \\
      & \leq \max\left\{
        \epsilon_g,
        \kappa\sqrt{\|S_k\|_2^2 + \|c(X_k)\|_2 - \|c(X_k) + \nabla c(X_k)^T V_k\|_2}
      \right\}
    \Big| \Fcal_k
    \Big]
    \geq 1 - \delta. \nonumber
    \end{align}
\end{assumption}

\begin{remark} The following comments concern Assumption~\ref{ass:zeroth_order_oracle} and Assumption~\ref{ass:stochastic_first_order_oracle}.
  \begin{enumerate}
  \item
  The latter conditions in~\eqref{eq:subexp-E} and~\eqref{eq:subexp-E+} are one-sided subexponential-like assumptions on $E_k$ and $E_k^+$, respectively. For more context, let us first recall the definition of sub-Gaussian and subexponential random variables. A random variable $X$ is called sub-Gaussian if its moment-generating function satisfies $\E[\exp(\lambda (X-\E[X]))] \leq \exp(\nu^2 \lambda^2/2)$ for all $\lambda \in \R{}$ for some $\nu > 0$. Such a random variable has light tails that decay at least as fast as those of a Gaussian. Subexponential random variables relax this by requiring the moment-generating function bound to hold only for $|\lambda| \leq 1/b$ for some $b > 0$, thereby accommodating heavier tails. A one-sided subexponential-like random variable further relaxes this requirement by imposing the bound only for $\lambda \in [0, 1/b]$.
  \item The conditions in~\eqref{eq:subexp-E+} are in some sense stronger than those in~\eqref{eq:subexp-E} since the bounds are assumed to hold for the estimation error $E_k^+$, which is realized after the computation of a trial step $S_k$. Thus, they relate to estimation errors at the trial iterate $X_k^+$, the value of which depends on $\Fcal_k$-measurable $X_k$, the normal step $V_k$, and the tangential step $U_k$ to produce $X_k^+ = X_k + S_k = X_k + V_k + U_k$.
  \item Similarly, with respect to \eqref{eq:first_order_accuracy}, a realization of $S_k$ is only known after a realization of $G_k$ is known.  Therefore, the accuracy condition~\eqref{eq:first_order_accuracy} is a posterior requirement that couples the gradient estimation error with the trial step.
  \item The two quantities inside the max of~\eqref{eq:first_order_accuracy} serve distinct roles. The constant~$\epsilon_g$ bounds irreducible noise that is assumed to be intrinsic to the oracle. The second term in the max couples the required accuracy to the norm of the trial step $\|S_k\|_2$, which reflects progress towards stationarity, and to the reduction in linearized infeasibility $\|c(X_k)\|_2 - \|c(X_k) + \nabla c(X_k)^T V_k\|_2$, which reflects progress towards feasibility. Together, they permit larger gradient estimation errors when significant progress is possible in either stationarity or feasibility, and demand higher accuracy when the potential progress is small.
  \end{enumerate}
\end{remark}

We also make the following assumption, which depends on the parameters defined in Assumptions~\ref{ass:zeroth_order_oracle} and Assumption~\ref{ass:stochastic_first_order_oracle}, where the quantities $u_f$ and $p$ will be used in the stopping-time analysis in the next subsection.

\begin{assumption}\label{ass:p}
With respect to the algorithmic parameter $\epsilonbar_f \in \R{}_{>0}$, the real number
$$
u_f := \epsilonbar_f - \epsilon_f
$$
satisfies, 
with $(\nu,b)$ from Assumption~\ref{ass:zeroth_order_oracle} and $\delta$ from Assumption~\ref{ass:stochastic_first_order_oracle},
\begin{equation}\label{def:p}
u_f \geq 0 \ \ \text{and} \ \
p := 1 - \delta - \exp\(-\min\left\{\frac{u_f^2}{2\nu^2}, \frac{u_f}{2b}\right\}\) \in (1/2,1).
\end{equation}
\end{assumption}
Note that $u_f \geq 0$ whenever $\epsilonbar_f\geq \epsilon_f$, i.e., when the noise level $\epsilonbar_f$ used in the line search of Algorithm~\ref{alg:main} is at least as large as the actual noise bound $\epsilon_f$ in \eqref{eq:subexp-E}. At the same time, the condition $p \in (1/2,1]$ in~\eqref{def:p} will hold if $u_f$ is sufficiently large relative to $\nu$ and $b$ and if $\delta$ is sufficiently close to zero.

Our analysis of Algorithm~\ref{alg:main} uses a classification of the iterations based on the quality of the estimates $g_k$, $f_k$, and $f_k^+$ and whether the Armijo-type inequality (see Line~\ref{line:sufficient-decreasing} in Algorithm~\ref{alg:main}) holds. We call an iteration \emph{true} if the estimates are sufficiently accurate, and we call an iteration \emph{successful} if the trial step satisfies the Armijo-type inequality. We formalize these definitions next.

\begin{definition}[True iteration]\label{def:true}
Iteration $k \in \N{}$ is \emph{true} if and only if
  \begin{align*}
    \|G_k - \nabla f(X_k)\|_2 &\leq \max\left\{\epsilon_g, \kappa \sqrt{\|S_k\|_2^2 + \|c(X_k)\|_2 - \|c(X_k) + \nabla c(X_k)^TV_k\|_2} \right\} \\
    \text{and}\ \ E_k + E^+_k &\leq 2\epsilonbar_f.
  \end{align*}
\end{definition}

\begin{definition}[Successful iteration]\label{def:successful} Iteration $k \in \N{}$ is \emph{successful} if and only if the inequality in Line~\ref{line:sufficient-decreasing} of Algorithm~\ref{alg:main} holds.
\end{definition}

We can now define the $\Fcal_{k+1}$-measurable random variables
\begin{equation*}
I_k := 
\begin{cases}
    1 & \text{if iteration $k$ is true}, \\
    0 & \text{otherwise,}
\end{cases}
\ \ \text{and} \ \
\Theta_k :=
\begin{cases}
    1 & \text{if iteration $k$ is successful}, \\
    0 & \text{otherwise,}
\end{cases}
\end{equation*}
and realizations of these random variables will be denoted as $i_k$ and $\theta_k$, respectively.

The above classifications are used to prove a relaxed decrease condition on the merit function for iterations that are \emph{true} and \emph{successful} (see Lemma~\ref{lem:alg-properties}). The proof uses the $\Fcal_{k+1/3}$-measurable random variable
\bequationNN
  W_k:= \Tcal_k (f(X_k) - f_{\text{inf}} + r(X_k) - r_{\text{inf}}) + \|c(X_k)\|_2,
\eequationNN
where a realization of $W_k$ will be denoted by $w_k$.  The random variable $W_k$ is a shifted version of the merit function in~\eqref{eq:merit_function-repeat}, which combines objective function gaps (to their independent infima) with the violation of the equality constraint.

We now introduce a constraint qualification assumption. The one that we adopt uses the active set at the Cauchy point for the normal step subproblem defined earlier.  Specifically, it uses the $\Fcal_k$-measurable random set
$$
\Acal_k^c := \{i \in [n]: [X_k+V_k^c]_i = 0 \},
$$
which is the index set of variables active at $X_k+V_k^c$ with respect to $x\in\Omega$.

\begin{assumption}\label{ass:strong-licq}
There exists $\sigmamin \in \R{}_{>0}$ such that $\sigmamin([\nabla c(X_k), I_{\Acal_k^c}^T]^T) \geq \sigmamin$ for all $k \in \N{}$, where $I_{\Acal_k^c}$ denotes the subset of rows of the $n\times n$ identity matrix that correspond to the elements in $\Acal_k^c$.
\end{assumption}


Finally, we assume that the stochastic gradient estimates are bounded uniformly.  This assumption holds, for example, if $f$ is defined as a sample average approximation.  It is critical for proving that $T_k$ is bounded away from zero.

\begin{assumption}\label{ass:kappaG}
There exists $\kappa_G \in \R{}_{>0}$ such that $\|G_k\|_2 \leq \kappa_G$ for all $k \in \N{}$.
\end{assumption}

\subsection{Stopping Time Analysis}\label{sec:analyze-the-stochastic-process}

Our aim in this subsection is to analyze a stopping time of our proposed algorithm. The stopping time that we consider is defined in terms of quantities introduced in the previous subsection along with a uniform lower bound for the merit parameter. Due to this latter dependence, before defining our stopping time, we prove a few introductory lemmas to show that such a lower bound exists.

We start by providing a lower bound on the reduction in linearized infeasibility. 

\begin{lemma}\label{lem:lin-infeas-reduction}
  There exists $\kappa_1 \in \R{}_{>0}$ such that, for all $k \in \N{}$, one has
  \begin{align*}
    &\ \|c(X_k)\|_2 - \|c(X_k) + \nabla c(X_k)^T S_k\|_2 \\
    =&\ \|c(X_k)\|_2 - \|c(X_k) + \nabla c(X_k)^T V_k\|_2 \\
    \geq&\ \kappa_1 \sigmamin^2 \|c(X_k)\|_2 \min \left\{ \frac{1}{1 + \|\nabla c(X_k) \nabla c(X_k)^T\|_2}, \kappav A_k \right\} \\
    \geq&\ \frac{\kappa_1}{\kappa_c} \sigmamin^2 \|c(X_k)\|_2^2 \min\left\{\frac{1}{1+\|\nabla c(X_k) \nabla c(X_k)^T\|_2}, \kappav A_k \right\}.
  \end{align*}
\end{lemma}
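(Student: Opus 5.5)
The plan has three parts: the equality, the first inequality, and the second inequality.

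\emph{The equality.} Since $S_k = V_k + U_k$, this follows at once from \eqref{kkt:nullspace}. That condition gives $\nabla c(X_k)^T U_k = 0$, so $\nabla c(X_k)^T S_k = \nabla c(X_k)^T V_k$.

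\emph{The last inequality.} This one is also immediate. By \eqref{eq:consequence-of-basic-assumption} we have $\|c(X_k)\|_2 \le \kappa_c$, hence $\|c(X_k)\|_2 \ge \|c(X_k)\|_2^2/\kappa_c$. Multiplying by the nonnegative factor $\kappa_1\sigmamin^2\min\{\cdot,\cdot\}$ preserves the inequality.

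\emph{The main inequality.} The work is the Cauchy-decrease bound. By \eqref{subprob:trust-region_conditions}, $m_k(V_k) \le m_k(V_k^c)$, so it suffices to bound the reduction $m_k(0) - m_k(V_k^c)$ from below. I will invoke the Cauchy-point analysis of \cite[Section~3]{curtis2025proximal}. The Cauchy point is the projected-gradient arc point along $\Proj_{T_\Omega(X_k)}$ of $-\nabla m_k(0) = -\nabla c(X_k) c(X_k)$, with a backtracking step size and radius $\kappav A_k \Delta_k$. The standard result I expect to use is
\[
m_k(0) - m_k(V_k^c) \ge \kappa_0\, \Delta_k^2 \min\left\{\frac{1}{1+\|\nabla c(X_k)\nabla c(X_k)^T\|_2}, \kappav A_k\right\}
\]
for some constant $\kappa_0>0$. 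This form arises because $\Delta_k$ plays the role of the projected-gradient norm, the Hessian of $m_k$ is $\nabla c\nabla c^T$, and the radius is $\kappav A_k \Delta_k$.

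Next I convert $\Delta_k^2$ into $\sigmamin^2\|c(X_k)\|_2^2$ using Assumption~\ref{ass:strong-licq}. This is the step I expect to be the main obstacle. The projection onto $T_\Omega(X_k)$ zeroes out negative components on the indices active at $X_k$, and these indices must be related to $\Acal_k^c$. Assume, as the Cauchy analysis should provide, that variables active at $X_k + V_k^c$ include those blocking the projected direction. Then one argues
\[
\Delta_k \ge \sigmamin([\nabla c(X_k), I_{\Acal_k^c}^T]^T)\,\|c(X_k)\|_2 \ge \sigmamin \|c(X_k)\|_2,
\]
via a Moreau decomposition: $-\nabla c(X_k)c(X_k) = \Proj_{T}(\cdot) + \Proj_{N}(\cdot)$, with the normal-cone part supported on $\Acal_k^c$. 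Combining this with the full-rank lower singular value bound gives the desired estimate. If the active-set bookkeeping is delicate, I would lean on the corresponding lemma in \cite{curtis2025proximal}.

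Finally, I pass from the reduction in the squared model to the reduction in the norm. Write
\[
\|c\| - \|c+\nabla c^T V\| = \frac{\|c\|^2 - \|c+\nabla c^TV\|^2}{\|c\|+\|c+\nabla c^TV\|} \ge \frac{2(m_k(0)-m_k(V_k))}{2\|c\|},
\]
using that the decrease is nonnegative, so $\|c+\nabla c^T V\| \le \|c\|$. This yields the bound $\kappa_0\sigmamin^2\|c(X_k)\|_2\min\{\cdot,\cdot\}$. Setting $\kappa_1 := \kappa_0$ completes the argument; the case $c(X_k)=0$ is trivial.
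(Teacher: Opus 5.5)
Your equality, your final inequality, and your passage from the decrease in $m_k$ to the decrease in $\|c(X_k)\|_2$ are all fine. Your bound $\Delta_k \ge \sigmamin\|c(X_k)\|_2$ is also correct: indices with $[X_k]_i = 0$ and $[\nabla c(X_k)c(X_k)]_i > 0$ stay at zero along a projected path, so they lie in $\Acal_k^c$.

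The gap is the step you treat as standard, namely the Cauchy decrease $m_k(0)-m_k(V_k^c) \ge \kappa_0\Delta_k^2\min\{1/(1+\|\nabla c(X_k)\nabla c(X_k)^T\|_2),\kappav A_k\}$. This bound is measured by the tangent-cone projection at $X_k$, and it is false for bound constraints. A coordinate that is free at $X_k$ but close to its bound counts fully in $\Delta_k$, yet it contributes at most $[X_k]_i\,|[\nabla c(X_k)c(X_k)]_i|$ to the decrease. Standard bound-constrained Cauchy results use $\|\Proj_\Omega(x-g)-x\|_2$ or a similar measure, which can be much smaller than $\|\Proj_{T_\Omega(x)}(-g)\|_2$.

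For a concrete instance, take $n=2$, $m=1$, $X_k=(\epsilon,1)$, $c(X_k)=1$, and $\nabla c(X_k)=e_1$. Then $\Delta_k=1$ and $\|\nabla c(X_k)\nabla c(X_k)^T\|_2=1$. Yet every $v$ with $X_k+v\in\Omega$ has $v_1\ge-\epsilon$ and hence $m_k(0)-m_k(v)\le\epsilon$, so no fixed $\kappa_0$ works as $\epsilon\to0$. This instance violates Assumption~\ref{ass:strong-licq}: for small $\epsilon$ the Cauchy point lands on $x_1=0$, and $[e_1,e_1]^T$ is rank deficient. That is exactly the point. The constraint qualification at $\Acal_k^c$ is what makes any Cauchy decrease possible, so it must be used inside the decrease estimate, not only afterward to turn $\Delta_k$ into $\|c(X_k)\|_2$. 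The obstacle you flagged is the easy part; the real difficulty is upstream.

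Here is what works for the projected-path Cauchy point you describe. Set $g:=\nabla c(X_k)c(X_k)$ and $V_k^c=\Proj_\Omega(X_k-\beta_k^c g)-X_k$, with $\beta_k^c$ chosen by Armijo-type backtracking. Measure the decrease on the free set $\mathcal{I}_k:=\{1,\dots,n\}\setminus\Acal_k^c$:
\begin{itemize}
\item Every $i\in\mathcal{I}_k$ satisfies $[V_k^c]_i=-\beta_k^c g_i$, and $-g_i[V_k^c]_i\ge0$ for every $i$. Hence $-g^TV_k^c\ge\beta_k^c\|g_{\mathcal{I}_k}\|_2^2$.
\item The vector $g$ with its $\Acal_k^c$-entries zeroed equals $[\nabla c(X_k),I_{\Acal_k^c}^T]$ applied to $(c(X_k),-g_{\Acal_k^c})$. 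So Assumption~\ref{ass:strong-licq} gives $\|g_{\mathcal{I}_k}\|_2\ge\sigmamin\|c(X_k)\|_2$.
\end{itemize}
The factor $\min\{1/(1+\|\nabla c(X_k)\nabla c(X_k)^T\|_2),\kappav A_k\}$ then comes from a lower bound on $\beta_k^c$:
\begin{itemize}
\item Componentwise, $\|\Proj_\Omega(X_k-\beta g)-X_k\|_2\le\beta\Delta_k$. So the trust-region radius $\kappav A_k\Delta_k$ can only force $\beta_k^c\gtrsim\kappav A_k$; the $\Delta_k$ cancels, which is why it appears in the radius.
\item The projection inequality $\|\Proj_\Omega(X_k-\beta g)-X_k\|_2^2\le-\beta g^T(\Proj_\Omega(X_k-\beta g)-X_k)$ shows the curvature term can only force $\beta_k^c\gtrsim1/(1+\|\nabla c(X_k)\nabla c(X_k)^T\|_2)$.
\end{itemize}
Combining these with your last two steps gives the lemma, and the bound $\Delta_k\ge\sigmamin\|c(X_k)\|_2$ is never needed.
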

\begin{proof}
  The proof follows exactly as in~\cite[Lemma~5.10]{curtis2025proximal} and uses Assumption~\ref{ass:strong-licq}.  \qed
\end{proof}

The next lemma bounds $\Delta_k$ in terms of the constraint violation.

\begin{lemma}\label{lem:vk(1)-bound}
  For all $k \in \N{}$, one has 
$\Delta_k 
\leq 2\kappaJ \|c(X_k)\|_2 \leq 2\kappa_J \kappa_c$.
\end{lemma}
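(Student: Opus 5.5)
The plan is to bound $\Delta_k=\|\Proj_{T_\Omega(X_k)}(-\nabla c(X_k)c(X_k))\|_2$ by the norm of the vector being projected, and then use the uniform bounds in~\eqref{eq:consequence-of-basic-assumption}. The key fact is that $T_\Omega(X_k)$ is a nonempty closed convex cone containing the origin. Since $\Omega$ is the nonnegative orthant, this cone is $\{d: d_i\ge 0 \text{ for } i \text{ with } [X_k]_i=0\}$. For any closed convex cone $K$ and any vector $w$, one has $\|\Proj_K(w)\|_2\le\|w\|_2$. This follows because projection onto a closed convex set is nonexpansive and $\Proj_K(0)=0$, so
\[
\|\Proj_K(w)\|_2=\|\Proj_K(w)-\Proj_K(0)\|_2\le\|w-0\|_2 .
\]
For the orthant case one can also argue directly: the projection zeroes out some negative components and leaves the rest unchanged, so it cannot increase the norm.

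Applying this with $w=-\nabla c(X_k)c(X_k)$ gives
\[
\Delta_k\le\|\nabla c(X_k)c(X_k)\|_2\le\|\nabla c(X_k)\|_2\|c(X_k)\|_2\le\kappa_J\|c(X_k)\|_2,
\]
using $\|\nabla c(x)\|_2\le\kappa_J$ on $\Xcal$, which holds since $X_k\in\Xcal$ by Assumption~\ref{ass:general}. This is already stronger than the stated bound by a factor of $2$, so the claimed inequality $\Delta_k\le 2\kappa_J\|c(X_k)\|_2$ follows at once. The factor $2$ presumably matches a convention used in \cite{curtis2025proximal}, for instance a triangle-inequality decomposition $\Proj_K(w)=w-\Proj_{K^\circ}(w)$ with each term bounded by $\|w\|_2$. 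Finally, $\|c(X_k)\|_2\le\kappa_c$ gives the second inequality $2\kappa_J\|c(X_k)\|_2\le 2\kappa_J\kappa_c$.

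There is no real obstacle here. The only point needing care is justifying nonexpansiveness of the projection onto the tangent cone, and that follows from the cone being closed and convex, which is immediate for the orthant. Everything holds realization-wise, so the statement holds surely for the random variables.
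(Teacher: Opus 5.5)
Your proof is correct. It is more self-contained than the paper's, which consists only of a pointer to \cite[Lemma~5.3]{curtis2025proximal} combined with \eqref{eq:consequence-of-basic-assumption} and the definition \eqref{eq:delta}.

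You prove the projection bound directly. Since $T_\Omega(X_k)$ is a closed convex cone containing the origin, nonexpansiveness of the projection together with $\Proj_{T_\Omega(X_k)}(0)=0$ gives
$\Delta_k\le\|\nabla c(X_k)c(X_k)\|_2\le\kappa_J\|c(X_k)\|_2$.
This buys two things. The lemma no longer depends on the external reference, and you get the constant $\kappa_J$ rather than $2\kappa_J$.

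The paper's factor $2$ is inherited from the cited result and is harmless. Your sharper bound, used in step~(iv) of the proof of Lemma~\ref{lem:merit-bounded}, would shrink the denominator in the definition of $\tau_{\min}$ and give a slightly larger lower bound on the merit parameter. Nothing in the analysis needs this.

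One point to make explicit in your write-up: $X_k\in\Omega$ for every realization. This holds because $x_0\in\Omega$ and both subproblem constraints keep $x_k+s_k\in\Omega$. Your description of the orthant's tangent cone as $\{d: d_i\ge 0 \text{ for } [X_k]_i=0\}$ presupposes it.
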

\begin{proof}
  See~\cite[Lemma 5.3]{curtis2025proximal}, \eqref{eq:consequence-of-basic-assumption}, and the definition of $\delta_k$ (so $\Delta_k$) in~\eqref{eq:delta}. \qed
\end{proof}

We next prove that the merit parameters are uniformly bounded from zero.

\begin{lemma}\label{lem:merit-bounded}
  For all $k\in\N{}$, it holds that 
  $$
  \Tcal_{k,\text{trial}} \geq \Tcal_k \geq 
  \tau_{\min} := \min\left\{\tau_{-1},(1-\epsilon_\tau) \frac{(1-2\eta_1)\kappa_1\sigmamin^2 \min \left\{\frac{1}{1+ \kappaJ^2} , \kappa_v\right\}}{2(\kappa_G + \kappa_{\partial r}) \kappav \kappaJ + 2\kappav^2 \kappaJ^2 \kappac}\right\}
  > 0.
  $$
\end{lemma}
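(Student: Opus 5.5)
The plan is to bound $\Tcal_{k,\text{trial}}$ from below by a constant whenever it is finite, and then pass this bound to $\Tcal_k$ through the update rule \eqref{eq:tau-update}. The inequality $\Tcal_{k,\text{trial}} \ge \Tcal_k$ is immediate from \eqref{eq:tau-update}. If $\tau_{k-1}\le\tau_{k,\text{trial}}$, then $\tau_k=\tau_{k-1}\le\tau_{k,\text{trial}}$; otherwise $\tau_k\le \tau_{k,\text{trial}}$ directly. In the decreasing case we also have $\tau_{k-1} > \tau_{k,\text{trial}}$, so
\[
\tau_k=\min\{(1-\epsilon_\tau)\tau_{k-1},\tau_{k,\text{trial}}\}\ge(1-\epsilon_\tau)\tau_{k,\text{trial}} .
\]
An induction on $k$, starting from $\tau_{-1}$, then gives $\tau_k\ge\min\{\tau_{-1},(1-\epsilon_\tau)\inf_j \tau_{j,\text{trial}}\}$. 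It therefore suffices to show that, whenever $q_k(s_k)-q_k(0)>0$,
\[
\tau_{k,\text{trial}} \ \ge\ \frac{(1-2\eta_1)\kappa_1\sigmamin^2 \min\{\frac{1}{1+\kappaJ^2},\kappav\}}{2(\kappa_G+\kappa_{\partial r})\kappav\kappaJ+2\kappav^2\kappaJ^2\kappac}.
\]

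\emph{Upper bound on the denominator.} The key observation is that $u=0$ is feasible for \eqref{subprob:tangential}: $x_k+v_k\in\Omega$ by \eqref{subprob:trust-region_conditions}, and the null-space constraint holds trivially. Optimality of $u_k$ then gives $q_k(s_k)\le q_k(v_k)$, and hence
\[
q_k(s_k)-q_k(0)\le g_k^Tv_k+\tfrac{1}{2\alpha_k}\|v_k\|_2^2+r(x_k+v_k)-r(x_k).
\]
To bound the last difference, take $\bar g\in\partial r(x_k+v_k)$. Convexity gives $r(x_k+v_k)-r(x_k)\le \bar g^Tv_k\le\kappa_{\partial r}\|v_k\|_2$, using \eqref{eq:consequence-of-basic-assumption}. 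Assumption~\ref{ass:kappaG} gives $g_k^Tv_k\le\kappa_G\|v_k\|_2$.

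Next combine $\|v_k\|_2\le\kappav\alpha_k\delta_k$ with Lemma~\ref{lem:vk(1)-bound}, which gives $\delta_k\le 2\kappaJ\|c(x_k)\|_2$, and with $\|c(x_k)\|_2\le\kappac$. The result is
\[
q_k(s_k)-q_k(0)\le \alpha_k\|c(x_k)\|_2\bigl(2(\kappa_G+\kappa_{\partial r})\kappav\kappaJ+2\kappav^2\kappaJ^2\kappac\bigr).
\]
In particular, if $c(x_k)=0$ then $q_k(s_k)-q_k(0)\le 0$, so $\tau_{k,\text{trial}}=\infty$ and there is nothing to prove.

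\emph{Lower bound on the numerator.} By \eqref{kkt:nullspace}, $\nabla c(x_k)^Tu_k=0$, so the numerator equals $(1-2\eta_1)$ times the reduction in linearized infeasibility due to $v_k$. The first inequality of Lemma~\ref{lem:lin-infeas-reduction}, together with $\|\nabla c\nabla c^T\|_2\le\kappaJ^2$, bounds this reduction below by
\[
\kappa_1\sigmamin^2\|c(x_k)\|_2\min\{1/(1+\kappaJ^2),\kappav\alpha_k\}.
\]

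\emph{Combining the bounds.} In the ratio defining $\tau_{k,\text{trial}}$, the factor $\|c(x_k)\|_2$ cancels. What remains is
\[
\min\{1/(1+\kappaJ^2),\kappav\alpha_k\}/\alpha_k \ \ge\ \min\{1/(1+\kappaJ^2),\kappav\},
\]
which holds because $\alpha_k\le\alpha_0\le1$ by the update in Algorithm~\ref{alg:main}. This yields the claimed constant.

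The main point needing care is the denominator bound. Comparing with $u=0$ is what removes any dependence on the unbounded quantity $\|u_k\|_2$. The bound must also come out proportional to $\alpha_k\|c(x_k)\|_2$, so that it matches the numerator's scaling in both $\alpha_k$ and $\|c(x_k)\|_2$; this is exactly why the $\kappac$ factor appears on the quadratic term.
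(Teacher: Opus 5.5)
Your proposal is correct and follows essentially the same route as the paper. You use feasibility of $u=0$ to compare $q_k(s_k)$ with $q_k(v_k)$, bound the resulting terms by a multiple of $\alpha_k\|c(x_k)\|_2$ via Lemma~\ref{lem:vk(1)-bound}, lower-bound the numerator with Lemma~\ref{lem:lin-infeas-reduction}, cancel $\|c(x_k)\|_2$, and finish with $\alpha_k\le 1$ and the update rule \eqref{eq:tau-update}. The only differences are minor: you read off $c(x_k)=0\Rightarrow\tau_{k,\text{trial}}=\infty$ directly from the denominator bound (the paper argues via $\delta_k=0\Rightarrow v_k=0$), and you spell out the induction through \eqref{eq:tau-update}, which the paper leaves implicit.
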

\begin{proof}
  Let us begin by bounding the change in $q_k$ with respect to the step $S_k$.  
  Since~$r$ is convex, it follows from the subgradient inequality at $X_k + V_k$ that
  \begin{equation}\label{eq:subgradient.inequality}
    r(X_k + V_k) - r(X_k) \leq (G_{r,k}^v)^T V_k\ \ \text{for all} \ \ G_{r,k}^v \in \partial r(X_k + V_k).
  \end{equation}
  Since $S_k = V_k+U_k$, $U_k$ is optimal for \eqref{subprob:tangential}, and the zero vector is feasible for~\eqref{subprob:tangential},
  \begin{align*}
    &\ G_k^T S_k + \tfrac{1}{2A_k}\|S_k\|_2^2 + r(X_k+S_k) \\
    =&\ G_k^T(V_k+U_k) + \tfrac{1}{2A_k}\|V_k+U_k\|_2^2 + r(X_k+V_k+U_k) \\
    =&\ q_k(V_k+U_k) \leq q_k(V_k) = G_k^T V_k + \tfrac{1}{2A_k}\|V_k\|_2^2 + r(X_k+V_k).
  \end{align*}
  Subtracting $q_k(0) = r(X_k)$ from both sides and using~\eqref{eq:subgradient.inequality} gives
  \begin{equation}\label{bound-denom-from-zero}
    \begin{aligned}
      & G_k^T S_k + \tfrac{1}{2A_k}\|S_k\|_2^2 + r(X_k+S_k) - r(X_k) \\
      \leq \ & G_k^T V_k + \tfrac{1}{2A_k}\|V_k\|_2^2 + r(X_k+V_k) - r(X_k) \\
      \leq \ & G_k^T V_k + \tfrac{1}{2A_k}\|V_k\|_2^2 +(G_{r,k}^v)^T V_k \\
      \overset{(i)}{\leq} \ & (\|G_k\|_2 + \|G_{r,k}^v\|_2) \|V_k\|_2 + \tfrac{1}{2A_k} \|V_k\|_2^2 \\
      \overset{(ii)}{\leq} \ & (\|G_k\|_2 + \|G_{r,k}^v\|_2) \kappa_v A_k \Delta_k + \tfrac{1}{2A_k} \kappa_v^2 A_k^2 \Delta_k^2 \\
      \overset{(iii)}{=} \ & (\|G_k\|_2 + \|G_{r,k}^v\|_2) \kappa_v A_k \Delta_k + \thalf\kappa_v^2 A_k \Delta_k^2 \\
      \overset{(iv)}{\leq} \ & (\|G_k\|_2 + \|G_{r,k}^v\|_2) 2 \kappav A_k \kappaJ \|c(X_k)\|_2 + 2 \kappav^2 A_k \kappaJ^2 \kappac \|c(X_k)\|_2 \\
      \overset{(v)}{\leq} \ & 2(\kappa_G + \kappa_{\partial r}) \kappav \kappaJ A_k \|c(X_k)\|_2 + 2\kappav^2 \kappaJ^2 \kappac A_k \|c(X_k)\|_2,
    \end{aligned}
  \end{equation}
  where (i) follows from the Cauchy-Schwarz inequality, (ii) follows from the first inequality in~\eqref{subprob:trust-region_conditions}, (iii) follows from canceling an $A_k$ from the second term, (iv)~follows from Lemma~\ref{lem:vk(1)-bound} and~\eqref{eq:consequence-of-basic-assumption}, and (v)~follows from~\eqref{eq:consequence-of-basic-assumption} and Assumption~\ref{ass:kappaG}.

  We now prove that $\{\Tcal_{k,\rm trial}\}_{k\geq 0}$ is uniformly bounded away from zero. Conditioned on $q_k(S_k) - q_k(0) \leq 0$, one finds $\Tcal_{k,\rm trial} = \infty$.  Otherwise, conditioned on $q_k(S_k) - q_k(0) > 0$, we first show that $c(X_k) \neq 0$. To see this, note that if $c(X_k) = 0$ then it would follow from~\eqref{eq:delta} that $\Delta_k = 0$, and then $V_k = 0$ follows from~\eqref{subprob:trust-region_conditions}, and then $q_k(S_k) = q_k(V_k+U_k) \leq q_k(0)$ follows from how $U_k$ is computed in~\eqref{subprob:tangential}, which is a contradiction.  Thus, conditioned on $q_k(S_k) - q_k(0) > 0$, it must hold that $c(X_k) \neq 0$, which can be combined with the definition of $\Tcal_{k,\rm trial}$, $S_k = V_k + U_k$, $\nabla c(X_k)^T U_k = 0$ (see~\eqref{kkt:nullspace}), Lemma~\ref{lem:lin-infeas-reduction}, \eqref{bound-denom-from-zero},  \eqref{eq:consequence-of-basic-assumption}, and the fact that $A_k \leq 1$ to get
  \begin{align} 
    \Tcal_{k,\text{trial}} 
    &= \frac{(1-2\eta_1)(\|c(X_k)\|_2 - \|c(X_k) + \nabla c(X_k)^T S_k\|_2)}{q_k(S_k) - q_k(0)} \nonumber \\
    &\geq \frac{(1-2\eta_1) \kappa_1 \sigmamin^2 \|c(X_k)\|_2 \min \left\{\frac{1}{1 + \|\nabla c(X_k) \nabla c(X_k)^T\|_2}, \kappa_v A_k\right\}}{2(\kappa_G + \kappa_{\partial r}) \kappav \kappaJ A_k \|c(X_k)\|_2 + 2 \kappav^2 \kappaJ^2 \kappac A_k \|c(X_k)\|_2} \nonumber \\
    &\geq \frac{(1-2\eta_1)\kappa_1\sigmamin^2 \min \left\{\frac{1}{1+ \kappaJ^2}, \kappa_v A_k\right\}}{2(\kappa_G + \kappa_{\partial r}) \kappav \kappaJ A_k + 2\kappav^2 \kappaJ^2 \kappac A_k} \nonumber \\
    &\geq \frac{(1-2\eta_1)\kappa_1\sigmamin^2 \min \left\{\frac{1}{1+ \kappaJ^2} , \kappa_v\right\}}{2(\kappa_G + \kappa_{\partial r}) \kappav \kappaJ + 2\kappav^2 \kappaJ^2 \kappac} > 0. \label{eq:tauktrial-bound}
  \end{align}
  Combining the merit parameter update rule~\eqref{eq:tau-update} and~\eqref{eq:tauktrial-bound} completes the proof. \qed
\end{proof}



We now define a stopping time.
\begin{definition}[Stopping time]\label{def:stoppingtime}
Choose any $\eta \in \(0, \ 
  \frac{\sqrt{2\eta_1(1-3\eta_1)}}
       {1 + 2\sqrt{2\eta_1(1-3\eta_1)}}
\) \subset (0,\tfrac{1}{2})$, and then define (see the discussion below showing $\omega_2 > 0$) the quantities 
\begin{align*}
    \omega_1 &:= \frac{1}{1-\eta_1}\(\frac{\tau_{\min} (L_g + 2\eta_1) + L_J}{\tau_{\min}} + \frac{\kappa^2}{2\eta_1}\) > 0, \\
    \omega_2 &:= \frac{\tau_{\min} L_g + L_J}{\(1 - \dfrac{\eta^2}{2\eta_1(1-2\eta)^2} - 3\eta_1\)\tau_{\min}} > 0, \\ 
    \kappa_T &:= \sqrt{\kappa_c}/\sqrt{\kappa_1\sigma_{\min}^2 \min\{1/(1+\kappaJ^2), \kappav\}} > 0, \ \ \text{and} \\
    \alphabar &:= \gamma^{\ibar+1}\alpha_0 \ \ \text{with $\ibar$ defined as the smallest nonnegative integer} \\
    &\qquad\qquad\quad\ \text{such that $\gamma^{\ibar}\alpha_0 \leq \min\{\omega_1^{-1},\omega_2^{-1}\}$},
\end{align*}
where we note that this ensures that $\alphabar \leq \gamma \alpha_0 < \alpha_0$.
Then, for $\theta\in(0,1)$ and any
\begin{equation}\label{eq:epsilon_lower_bound}
\begin{aligned}
\epsilon 
&> \max\{\sqrt{2},\kappa_T\}
\max\left\{
\tfrac{\epsilon_g}{\eta}, 
\max\left\{
1+\kappa,
\tfrac{1}{\sqrt{1 - 2\eta}} \right\} \sqrt{\tfrac{(4 + 4\tau_{-1})\epsilonbar_f}{\theta\tau_{\min}\alphabar\eta_1(p - 1/2)}} \right\}
\end{aligned}
\end{equation}
let the stopping time be the random variable
\bequation\label{eq.T}
  T_\epsilon := \min \left\{ k \in \N{} : \Psi_k \leq \frac{\epsilon}{\max\{\sqrt{2},\kappa_T\}} \right\},
\eequation
where, for each $k \in \N{}$, the optimality measure is defined as
\bequationNN
  \Psi_k := \sqrt{\frac{\|\Sktrue\|_2^2}{2A_k^2} + \frac{\|\min\{X_k, -\Zktrue\}\|_2^2}{2} + \frac{\|c(X_k)\|_2 - \|c(X_k) + \nabla c(X_k)^T V_k\|_2}{A_k}}.
\eequationNN
We allow $T_\epsilon = \infty$ whenever the minimum in \eqref{eq.T} is not attained.
\end{definition}

In Definition~\ref{def:stoppingtime}, it is straightforward to see that $\omega_1 > 0$ and $\kappa_T > 0$, and once it is shown that $\omega_2 > 0$ it is also straightforward to see that $\alphabar > 0$.  To show $\omega_2 > 0$, first let $\abar := \sqrt{2\eta_1(1-3\eta_1)} > 0$ so that we may rewrite the interval containing $\eta$ as $\eta\in(0,\tfrac{\abar}{1+2\abar})$.  To prove that $\omega_2 > 0$, it is sufficient to show that
$$
\frac{\eta^2}{2\eta_1(1-2\eta)^2}< 1 - 3\eta_1 = \frac{\abar^2}{2\eta_1}.
$$
Multiplying both sides by $2\eta_1(1-2\eta)^2 > 0$ yields
$$
\eta^2 < \abar^2(1-2\eta)^2
\ \iff\
\eta < \abar(1-2\eta)
\ \iff \ 
(1+2\abar)\eta < \abar
\ \iff \ 
\eta < \frac{\abar}{1+2\abar},
$$
which holds by the definition of $\eta$, thus confirming that $\omega_2 > 0$.

We now justify the definition of the stopping time $T_\epsilon$.  To do this, the next lemma shows that, for a given run of Algorithm~\ref{alg:main}, an iterate satisfying the stopping time condition is an $\epsilon$-KKT point, which we define first.

\begin{definition}\label{def:epskkt}
The $k$th iterate of Algorithm~\ref{alg:main} is an $\epsilon$-KKT point if and only if
\begin{equation*}
\max\left\{\|\nabla f(x_k) + g_{r,k}^{\text{true}}+ \nabla c(x_k)y_k^{\text{true}} + z_k^{\text{true}}\|_2, \|\min\{x_k, -z_k^{\text{true}}\}\|_2, \|c(x_k)\|_2\right\} \leq \epsilon
\end{equation*}
where we also remind the reader that $g_{r,k}^{\text{true}} \in \partial r(x_k + s_k^{\text{true}})$.
\end{definition}

\begin{lemma}\label{lem:kkt-result} 
If during iteration $\kbar \in \N{}$ the tuple $(x_{\kbar}, \alpha_{\kbar}, v_{\kbar}, u_{\kbar}^{\text{true}}, g_{r,\kbar}^{\text{true}}, y_{\kbar}^{\text{true}}, z_{\kbar}^{\text{true}}, s_{\kbar}^{\text{true}})$ satisfies the inequality defining $T_\epsilon$, then 
$x_{\kbar}$ is an $\epsilon$-KKT point.
\end{lemma}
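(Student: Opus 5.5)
Assume $\Psi_{\kbar} \le \epsilon/\max\{\sqrt{2},\kappa_T\}$. The plan is to bound each of the three quantities in Definition~\ref{def:epskkt} by $\epsilon$, writing $c_{\kbar}:=\|c(x_{\kbar})\|_2$. Since $\Psi_{\kbar}^2$ is a sum of three nonnegative terms (the last is nonnegative by Lemma~\ref{lem:lin-infeas-reduction}), each term is at most $\epsilon^2/\max\{2,\kappa_T^2\}$.

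\emph{Complementarity.} From $\tfrac12\|\min\{x_{\kbar},-z_{\kbar}^{\rm true}\}\|_2^2 \le \epsilon^2/2$ we get $\|\min\{x_{\kbar},-z_{\kbar}^{\rm true}\}\|_2\le\epsilon$ immediately.

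\emph{Stationarity.} Use the KKT conditions \eqref{kkt:stationary} for the true subproblem, with $g_k$ replaced by $\nabla f(x_k)$. They give
\[
\nabla f(x_{\kbar}) + g_{r,\kbar}^{\rm true} + \nabla c(x_{\kbar}) y_{\kbar}^{\rm true} + z_{\kbar}^{\rm true} = -s_{\kbar}^{\rm true}/\alpha_{\kbar}.
\]
The stationarity residual therefore equals $\|s_{\kbar}^{\rm true}\|_2/\alpha_{\kbar}$. The first term of $\Psi_{\kbar}^2$ gives $\|s_{\kbar}^{\rm true}\|_2^2/(2\alpha_{\kbar}^2)\le\epsilon^2/2$, hence $\|s_{\kbar}^{\rm true}\|_2/\alpha_{\kbar}\le\epsilon$.

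\emph{Feasibility.} This is the main step. By Lemma~\ref{lem:lin-infeas-reduction} (last inequality), and using $\|\nabla c\nabla c^T\|_2\le\kappa_J^2$ from \eqref{eq:consequence-of-basic-assumption},
\[
\frac{\|c(x_{\kbar})\|_2-\|c(x_{\kbar})+\nabla c(x_{\kbar})^Tv_{\kbar}\|_2}{\alpha_{\kbar}}
\ \ge\ \frac{\kappa_1\sigmamin^2}{\kappa_c}\, c_{\kbar}^2\, \frac{\min\{1/(1+\kappa_J^2),\ \kappav\alpha_{\kbar}\}}{\alpha_{\kbar}} .
\]
Since $\alpha_{\kbar}\le\alpha_0\le 1$, we have $\min\{a,\kappav\alpha\}/\alpha\ge\min\{a,\kappav\}$ (check the two cases of the min). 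The lower bound is therefore $c_{\kbar}^2/\kappa_T^2$ by the definition of $\kappa_T$. Combining with the third term of $\Psi_{\kbar}^2$ being at most $\epsilon^2/\kappa_T^2$ gives $c_{\kbar}\le\epsilon$.

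Taking the maximum of the three bounds completes the argument. The only delicate point is the monotonicity in $\alpha$ in the feasibility step, which relies on $\alpha_{\kbar}\le 1$. This holds because $\alpha_0\in(0,1]$ and $\alpha_{k+1}\le\alpha_0$ is enforced by the algorithm.
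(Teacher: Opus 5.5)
Your proof is correct and follows the paper's own argument. You bound each of the three terms of $\Psi_{\kbar}^2$ by $\epsilon^2/\max\{2,\kappa_T^2\}$, and you use the true-subproblem stationarity condition to identify the residual with $\|s_{\kbar}^{\rm true}\|_2/\alpha_{\kbar}$. You then combine Lemma~\ref{lem:lin-infeas-reduction} with $\|\nabla c\,\nabla c^T\|_2\le\kappa_J^2$, $\alpha_{\kbar}\le 1$, and the definition of $\kappa_T$ to get $\|c(x_{\kbar})\|_2\le\epsilon$. Your two-case check that $\min\{a,\kappav\alpha\}/\alpha\ge\min\{a,\kappav\}$ fills in a step the paper leaves implicit.
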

\begin{proof}
  Consider such a $\kbar$. It follows from Definition~\ref{def:true-quantities}, the fact that the stopping time condition ensures $\|s_{\kbar}^{\text{true}}/(\sqrt{2}\alpha_{\kbar})\|_2 \leq \epsilon / \max\{\sqrt{2},\kappa_T\}$ (and similarly for the other terms in the left-hand side of the stopping-time condition),  Lemma~\ref{lem:lin-infeas-reduction}, \eqref{eq:consequence-of-basic-assumption}, $A_k \leq 1$, and Definition~\ref{def:stoppingtime} that
  \begin{equation*}
    \begin{aligned}
      \|\nabla f(x_{\kbar}) + g_{r,\kbar}^{\text{true}}+ \nabla c(x_{\kbar})y_{\kbar}^{\text{true}} + z_{\kbar}^{\text{true}}\|_2 = \left\|s_{\kbar}^{\text{true}}/\alpha_{\kbar}\right\|_2
      &\leq \tfrac{\sqrt{2}\epsilon}{\max\{\sqrt{2},\kappa_T\}} \leq \epsilon, \\
      \|\min\{x_{\kbar}, -z_{\kbar}^{\text{true}}\}\|_2 &\leq \tfrac{\sqrt{2}\epsilon}{\max\{\sqrt{2},\kappa_T\}} \leq \epsilon, \ \ \text{and} \\
      \|c(x_{\kbar})\|_2 &\leq \tfrac{\kappa_T\epsilon}{\max\{\sqrt{2},\kappa_T\}} \leq \epsilon. 
    \end{aligned}
  \end{equation*}
  It follows from these inequalities and Definition~\ref{def:epskkt} that $x_{\kbar}$ is an $\epsilon$-KKT point. \qed
\end{proof}

Before commencing the main part of our analysis, we motivate the definition of~$\alphabar$ in Definition~\ref{def:stoppingtime} by proving the following technical lemma and subsequently stating a definition for ``large'' proximal-parameters.

\begin{lemma}\label{lem:A-wd}
For all $k\in\N{}$,
$\min\{A_k,A_{k+1}\} < \alphabar$ if and only if $\max\{A_k,A_{k+1}\} \leq \alphabar$.
\end{lemma}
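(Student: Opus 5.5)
The plan rests on the fact that every realization of $A_k$ lies on the geometric grid $\{\gamma^j\alpha_0 : j\in\N{}\}$. I would prove this first by induction on $k$. It holds for $k=0$. If $A_k=\gamma^j\alpha_0$, the update rules in Algorithm~\ref{alg:main} give either $A_{k+1}=\gamma^{j+1}\alpha_0$ or $A_{k+1}=\min\{\alpha_0,\gamma^{j-1}\alpha_0\}$. The latter equals $\alpha_0$ when $j=0$ and $\gamma^{j-1}\alpha_0$ when $j\geq1$. Since $\gamma\in(0,1)$, both outcomes stay on the grid. The same update rules show that consecutive exponents differ by at most one: $A_{k+1}\in\{\gamma A_k,\gamma^{-1}A_k\}$, or $A_{k+1}=A_k=\alpha_0$ in the capped case.

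Next I use that $\alphabar=\gamma^{\ibar+1}\alpha_0$ is itself a grid point. Consequently, for a grid point $a=\gamma^j\alpha_0$, the condition $a<\alphabar$ is equivalent to $j\geq\ibar+2$, which is in turn equivalent to $a\leq\gamma\alphabar$. Likewise, $a\leq\alphabar$ is equivalent to $j\geq\ibar+1$.

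Both directions of the equivalence now follow from these facts.

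($\Leftarrow$) Suppose $\max\{A_k,A_{k+1}\}\leq\alphabar$. Then both exponents are at least $\ibar+1$. If the two values differ, one of them has exponent at least $\ibar+2$, so the minimum is strictly below $\alphabar$. If they coincide, the capped case would force $A_k=\alpha_0>\alphabar$, which contradicts the assumption. Otherwise $A_{k+1}\neq A_k$ by the update rules, because $\gamma\neq1$. So the equal case cannot occur when $\max\leq\alphabar<\alpha_0$, and hence $\min\{A_k,A_{k+1}\}<\alphabar$.

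($\Rightarrow$) Suppose $\min\{A_k,A_{k+1}\}<\alphabar$. Then the minimum has exponent at least $\ibar+2$. Since consecutive exponents differ by at most one, the other value has exponent at least $\ibar+1$, which gives $\max\{A_k,A_{k+1}\}\leq\alphabar$.

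The main obstacle, though a mild one, is handling the cap at $\alpha_0$. The fix is to note that whenever $A_{k+1}=A_k$, both values equal $\alpha_0$, and $\alpha_0>\gamma\alpha_0\geq\alphabar$ by Definition~\ref{def:stoppingtime}. Both sides of the equivalence are then false, so this case is consistent. The rest is bookkeeping on integer exponents, and I would carry it out explicitly in terms of $j_k$, where $A_k=\gamma^{j_k}\alpha_0$.
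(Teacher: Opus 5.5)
Your proof is correct and takes essentially the same approach as the paper. Both arguments use $\bar\alpha\le\gamma\alpha_0<\alpha_0$ to rule out the capped case $A_k=A_{k+1}=\alpha_0$ under either hypothesis, giving $\max\{A_k,A_{k+1}\}=\gamma^{-1}\min\{A_k,A_{k+1}\}$, and both then use that $\bar\alpha$ lies on the grid $\{\gamma^j\alpha_0\}$ to pass from $\min\{A_k,A_{k+1}\}<\bar\alpha$ to $\gamma^{-1}\min\{A_k,A_{k+1}\}\le\bar\alpha$; your induction on integer exponents makes explicit the grid property that the paper leaves implicit in the phrase ``how $\bar\alpha$ is defined.''
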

\begin{proof}
  Observe that Definition~\ref{def:stoppingtime} ensures that $\bar\alpha \leq \gamma \alpha_0$. This fact, along with the updating strategy for the proximal parameter, ensure that if either $\min\{A_k,A_{k+1}\} < \alphabar$ or $\max\{A_k,A_{k+1}\} \leq \alphabar$, then
  \begin{equation}\label{min-less-than-max}
    \max\{A_k,A_{k+1}\} = \gamma^{-1}\min\{A_k,A_{k+1}\} > \min\{A_k,A_{k+1}\}.
  \end{equation}
  Thus, if $\max\{A_k,A_{k+1}\} \leq \alphabar$, then it follows from~\eqref{min-less-than-max} that $\min\{A_k,A_{k+1}\} < \max\{A_k,A_{k+1}\} \leq \alphabar$. Otherwise, if $\min\{A_k,A_{k+1}\} < \alphabar$, then it follows from how $\alphabar$ is defined that $\gamma^{-1}\min\{A_k,A_{k+1}\} \leq \alphabar$. Combining this inequality with the equation in~\eqref{min-less-than-max} shows that $\max\{A_k,A_{k+1}\} \leq \alphabar$, which completes the proof.
  \qed
\end{proof}

The lemma above can now be used to define the following random variable.

\begin{definition}[Large proximal parameters]\label{def:largestep}
For each $k \in\N{}$, let
\[
L_k :=
\begin{cases}
1, & \text{if } \min\{A_k, A_{k+1}\} \geq \bar{\alpha}, \\
0, & \text{if } \max\{A_k, A_{k+1}\} \leq \bar{\alpha},
\end{cases}
\]
which is well defined because of Lemma~\ref{lem:A-wd}.  We also note that $L_k$ is $\Fcal_{k+1}$-measurable since $A_k$ is $\Fcal_k$-measurable and $\Theta_k$ is $\Fcal_{k+1}$-measurable.
\end{definition}

We now start the main part of our stopping-time analysis. The next result bounds the difference between $S_k$ and $S_k^{\mathrm{true}}$, and between $U_k$ and $U_k^{\mathrm{true}}$.
\begin{lemma}\label{lem:uk-gk-relation}
For $k\in\N{}$, $\tfrac{1}{A_k} \|S_k - S_k^{\mathrm{true}}\|_2 = \tfrac{1}{A_k} \|U_k - U_k^{\mathrm{true}}\|_2 \leq \|G_k - \nabla f(X_k)\|_2$.
\end{lemma}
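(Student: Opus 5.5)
The equality $\|S_k - \Sktrue\|_2 = \|U_k - \Uktrue\|_2$ is immediate, since $S_k = V_k + U_k$ and $\Sktrue = V_k + \Uktrue$ share the same deterministic normal step $V_k$ (Assumption~\ref{ass:vk}). The real content is the inequality. I will prove it as a nonexpansiveness property of the solution map of the strongly convex subproblem~\eqref{subprob:tangential} with respect to the linear term $g_k$.

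First, fix $x_k$, $\alpha_k$ and $v_k$, and change variables to $w = v_k + u$. Subproblem~\eqref{subprob:tangential} then becomes
\[
\min_{w \in C_k} \ g_k^T w + \tfrac{1}{2\alpha_k}\|w\|_2^2 + r(x_k+w),
\]
where
\[
C_k := \{w : \nabla c(x_k)^T(w - v_k) = 0,\ x_k + w \in \Omega\}
\]
is a closed convex set. This set does not depend on the gradient estimate.

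Writing $h(w) := r(x_k+w) + \iota_{C_k}(w)$, which is convex, the solution is
\[
s_k = \operatorname{prox}_{\alpha_k h}(-\alpha_k g_k).
\]
The true step is likewise $\sktrue = \operatorname{prox}_{\alpha_k h}(-\alpha_k \nabla f(x_k))$.

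Second, I will show nonexpansiveness directly from optimality, so as to avoid subdifferential sum-rule issues. By $\tfrac{1}{\alpha_k}$-strong convexity of each objective, evaluating the $g_k$-problem at $\sktrue$ and the $\nabla f(x_k)$-problem at $s_k$ gives two inequalities. The first is
\[
q(\sktrue) - q(s_k) \ge \tfrac{1}{2\alpha_k}\|\sktrue - s_k\|_2^2,
\]
and the second is the analogous inequality for the true problem. Adding them, the quadratic and $r$ terms cancel, leaving
\[
(g_k - \nabla f(x_k))^T(\sktrue - s_k) \ge \tfrac{1}{\alpha_k}\|\sktrue - s_k\|_2^2 .
\]
Cauchy--Schwarz then gives $\|s_k - \sktrue\|_2 \le \alpha_k \|g_k - \nabla f(x_k)\|_2$. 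Dividing by $\alpha_k$ yields the claim for every realization, hence for the random variables.

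Alternatively, the KKT system~\eqref{kkt:u} could be used: subtract the two stationarity equations and take the inner product with $u_k - \uktrue$. This uses $\nabla c(x_k)^T(u_k - \uktrue) = 0$ to kill the $y$ terms, monotonicity of $\partial r$ for the $g_r$ terms, and complementarity~\eqref{kkt:complementarity} together with $z \le 0$ for the $z$ terms.

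The only mild obstacle is making the strong-convexity inequality rigorous, since the objective is nonsmooth and constrained. I will handle it through the standard fact that a $\mu$-strongly convex function $F$ with minimizer $w^*$ over a convex set satisfies $F(w) - F(w^*) \ge \tfrac{\mu}{2}\|w - w^*\|_2^2$ for all feasible $w$. This requires only convexity, and no constraint qualification.
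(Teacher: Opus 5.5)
Your proof is correct, but your main argument takes a different route from the paper; your ``alternative'' KKT sketch is essentially what the paper does.

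\textbf{The paper's route.} It sets $h_k(u) = r(X_k+V_k+u) + I_\Omega(X_k+V_k+u)$ and invokes the subdifferential sum rule. It then writes KKT conditions for both tangential subproblems, with multipliers $Y_k$ and $\Yktrue$ for $\nabla c(X_k)^T u = 0$, and subtracts them. Taking the inner product with $U_k - \Uktrue$ kills the multiplier terms, and monotonicity of $\partial h_k$ plus Cauchy--Schwarz finishes the proof. Your two separate monotonicity steps (for $\partial r$, and for the $z$-terms via complementarity and $z\le 0$, i.e., monotonicity of $N_\Omega$) are bundled there into the single statement about $\partial h_k$.

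\textbf{Your primary route.} You compare objective values instead. You apply the quadratic growth of the $\tfrac{1}{A_k}$-strongly convex objectives crosswise at their minimizers over the common, $G_k$-independent feasible set $C_k$, and sum. The quadratic and $r$ terms cancel, leaving $(G_k-\nabla f(X_k))^T(\Sktrue - S_k) \ge \tfrac{1}{A_k}\|\Sktrue - S_k\|_2^2$.

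\textbf{What each buys.} Your argument is more elementary and more robust. It needs no subgradients, no multipliers, and no constraint qualification. The paper's argument relies on the sum rule and on existence of the multiplier for the null-space constraint. These hold here only because that constraint and $\Omega$ are polyhedral and $r$ is finite-valued. Your argument also shows that the lemma is just nonexpansiveness of the proximal map of $r(X_k+\cdot)+\iota_{C_k}$, which you could cite directly. The paper's route stays inside the KKT framework~\eqref{kkt:u} that the rest of its analysis uses anyway (e.g., $\Zktrue$ in $\Psi_k$), so it costs nothing extra there. Both routes give the same constant.
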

\begin{proof}
  The claimed equality follows from $\|S_k - S_k^{\mathrm{true}}\|_2 = \|V_k + U_k - (V_k + U_k^{\mathrm{true}})\|_2 = \|U_k - U_k^{\mathrm{true}}\|_2$.  The claimed inequality follows trivially if $\|U_k - U_k^{\mathrm{true}}\|_2 = 0$, so for the remainder of the proof we assume that $\|U_k - U_k^{\mathrm{true}}\|_2 \neq 0$. Consider arbitrary $k \in \N{}$ and define $h_k : \R{n} \to \R{}$ by
  \begin{align*}
    h_k(u) = r(X_k + V_k + u) + I_\Omega(X_k + V_k + u),
  \end{align*}
  where $I_\Omega$ denotes the indicator function associated with the set $\Omega$. Note that $h_k$ is convex since both $r$ and $I_\Omega$ are convex.  One can now write $U_k$ and $U_k^{\mathrm{true}}$, respectively, as the unique solutions to the following subproblems:
  \begin{align*}
    U_k^{\mathrm{true}} = \arg\min_{u \in \R{n}} &\ \nabla f(X_k)^T u + \tfrac{1}{2A_k}\|u\|_2^2 + \tfrac{1}{A_k}V_k^T u + h_k(u) \\ \text{subject to} &\ \nabla c(X_k)^T u = 0 \ \ \text{and} \\
    U_k = \arg\min_{u \in \R{n}} &\ G_k^T u + \tfrac{1}{2A_k}\|u\|_2^2 + \tfrac{1}{A_k} V_k^T u + h_k(u) \\ \text{subject to} &\ \nabla c(X_k)^T u = 0.
  \end{align*}
  Using the subdifferential sum rule \cite{rockafellar1970convex}, which holds since the constraint qualification $\mathrm{dom}(r) \cap \mathrm{int}\bigl(\mathrm{dom}(I_\Omega)\bigr) \neq \emptyset$ holds trivially since $\mathrm{dom}(r) = \R{n}$, it follows that for all $u \in \R{n}$ one has
  \begin{align*}
    \partial h_k(u) = \partial r(X_k+V_k+u) + N_\Omega(X_k+V_k+u).
  \end{align*} 
  Here, $N_\Omega(X_k+V_k+u)$ is the normal cone to $\Omega$ at $X_k+V_k+u$. 
  From the KKT conditions of the optimization problems above, there exist $(G_{h,k}^{\text{true}}, G_{h,k}) \in \partial h_k(U_k^{\mathrm{true}}) \times \partial h_k(U_k)$ and $(Y_k^{\mathrm{true}}, Y_k) \in \R{m} \times \R{m}$ such that
  \begin{align*}
    \nabla f(X_k) + \tfrac{1}{A_k}(U_k^{\mathrm{true}} + V_k) + G_{h,k}^{\text{true}} + \nabla c(X_k) Y_k^{\mathrm{true}} &= 0 \ \ \text{and} \\  
    G_k + \tfrac{1}{A_k}(U_k + V_k) + G_{h,k} + \nabla c(X_k) Y_k &= 0. 
  \end{align*}
  Subtracting these two equations gives
  \begin{equation*}
    (G_k - \nabla f(X_k)) + \tfrac{1}{A_k}(U_k - U_k^{\mathrm{true}}) + (G_{h,k} - G_{h,k}^{\text{true}}) + \nabla c(X_k)(Y_k - Y_k^{\mathrm{true}}) = 0.
  \end{equation*}
  Multiplying by $(U_k - U_k^{\mathrm{true}})^T$ and using $\nabla c(X_k)^T U_k^{\mathrm{true}} = \nabla c(X_k)^T U_k = 0$ gives
  \begin{equation*}
    (G_k - \nabla f(X_k))^T (U_k - U_k^{\mathrm{true}}) + \tfrac{1}{A_k}\|U_k - U_k^{\mathrm{true}}\|_2^2 + (G_{h,k} - G_{h,k}^{\text{true}})^T(U_k - U_k^{\mathrm{true}}) = 0.
  \end{equation*}
  Since $h_k$ is convex, its subdifferential $\partial h_k$ is a maximal monotone operator, from which it follows that $(G_{h,k} - G_{h,k}^{\text{true}})^T(U_k - U_k^{\mathrm{true}}) \geq 0$. Combining this with the equation above and using the Cauchy-Schwarz inequality gives
  \begin{align*}
    \tfrac{1}{A_k}\|U_k - U_k^{\mathrm{true}}\|_2^2 &\leq -(G_k - \nabla f(X_k))^T (U_k - U_k^{\mathrm{true}}) \\
    &\leq \|G_k-\nabla f(X_k)\|_2 \|U_k - U_k^{\mathrm{true}}\|_2.
  \end{align*}
  Dividing both sides by $\|U_k - U_k^{\mathrm{true}}\|_2 \neq 0$ completes the proof. \qed
\end{proof}

We now prove an upper bound on the change in the merit function attained by the trial step $S_k$ in all iterations. 
\begin{lemma}\label{lem:merit-reduction}
  For all $k \in \N{}$, it holds that
  \begin{align*}
    &\ \phi(X_k + S_k,\Tcal_k) - \phi(X_k,\Tcal_k) \\
    \leq&\ -\tfrac{\Tcal_k}{2A_k}\|S_k\|_2^2 + \tfrac{\Tcal_k L_g + L_J}{2} \|S_k\|_2^2 + \Tcal_k(\nabla f(X_k) - G_k)^TS_k + \Tcal_kG_k^TS_k \\
    &\ + \Tcal_k(r(X_k+S_k) - r(X_k)) + \tfrac{\Tcal_k}{2A_k} \|S_k\|_2^2 + \|c(X_k) + \nabla c(X_k)^TV_k\|_2 - \|c(X_k)\|_2.
  \end{align*}
\end{lemma}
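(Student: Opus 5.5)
The plan is to expand $\phi(X_k+S_k,\Tcal_k)-\phi(X_k,\Tcal_k)=\Tcal_k\bigl(f(X_k+S_k)-f(X_k)\bigr)+\Tcal_k\bigl(r(X_k+S_k)-r(X_k)\bigr)+\|c(X_k+S_k)\|_2-\|c(X_k)\|_2$ and bound the smooth objective piece and the constraint piece separately. The $r$-difference is kept exactly, and the two terms $\mp\tfrac{\Tcal_k}{2A_k}\|S_k\|_2^2$ in the statement sum to zero. They are inserted only so that $\Tcal_k G_k^TS_k+\Tcal_k(r(X_k+S_k)-r(X_k))+\tfrac{\Tcal_k}{2A_k}\|S_k\|_2^2=\Tcal_k(q_k(S_k)-q_k(0))$ is visible for later use. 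Likewise, $\Tcal_k\nabla f(X_k)^TS_k$ is rewritten as $\Tcal_k(\nabla f(X_k)-G_k)^TS_k+\Tcal_kG_k^TS_k$. So the content of the claim reduces to two inequalities.

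For the objective, I will use the descent lemma from the Lipschitz continuity of $\nabla f$ on the convex set $\Xcal$ given in \eqref{def:Lip-constants}. Assumption~\ref{ass:general} guarantees that $X_k$ and $X_k+S_k$ lie in $\Xcal$, so the segment between them does too. This gives $f(X_k+S_k)-f(X_k)\le \nabla f(X_k)^TS_k+\tfrac{L_g}{2}\|S_k\|_2^2$. Multiplying by $\Tcal_k>0$ (Lemma~\ref{lem:merit-bounded}) and splitting $\nabla f(X_k)^TS_k$ as above yields the objective terms of the claim.

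For the constraints, I will write $c(X_k+S_k)=c(X_k)+\nabla c(X_k)^TS_k+\int_0^1(\nabla c(X_k+tS_k)-\nabla c(X_k))^TS_k\,dt$. Then \eqref{def:Lip-constants} bounds the norm of the remainder by $\int_0^1 L_J t\|S_k\|_2^2\,dt=\tfrac{L_J}{2}\|S_k\|_2^2$. The triangle inequality gives $\|c(X_k+S_k)\|_2\le\|c(X_k)+\nabla c(X_k)^TS_k\|_2+\tfrac{L_J}{2}\|S_k\|_2^2$. Finally, $\nabla c(X_k)^TU_k=0$ by \eqref{kkt:nullspace}, so $\nabla c(X_k)^TS_k=\nabla c(X_k)^TV_k$, which is also the equality in Lemma~\ref{lem:lin-infeas-reduction}. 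This replaces $S_k$ by $V_k$ inside the norm, and subtracting $\|c(X_k)\|_2$ gives the constraint terms.

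Summing the two bounds and adding the zero pair $\mp\tfrac{\Tcal_k}{2A_k}\|S_k\|_2^2$ produces exactly the stated inequality, with $\tfrac{\Tcal_kL_g+L_J}{2}\|S_k\|_2^2$ collecting both quadratic remainders. I expect no real obstacle. The only points needing care are that the segment lies in $\Xcal$, which follows from convexity of $\Xcal$ in Assumption~\ref{ass:general}, and that the constraint Taylor bound uses the spectral-norm Lipschitz constant for $\nabla c^T$ rather than componentwise constants.
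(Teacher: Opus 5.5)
Your proposal is correct and follows the paper's proof: the descent lemma for $f$, the Lipschitz-Jacobian Taylor bound for $c$, and the replacement of $\nabla c(X_k)^T S_k$ by $\nabla c(X_k)^T V_k$ via \eqref{kkt:nullspace}. The remaining steps also match: you split $\Tcal_k\nabla f(X_k)^T S_k$ and insert the canceling pair $\mp\tfrac{\Tcal_k}{2A_k}\|S_k\|_2^2$, exactly as the paper does.
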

\begin{proof}
  Consider arbitrary $k \in \N{}$. Lipschitz continuity of $\nabla f(\cdot)$ and $\nabla c(\cdot)^T$ (see Assumption~\ref{ass:general}), $S_k = V_k + U_k$, and $\nabla c(X_k)^TU_k = 0$ show that 
  \begin{align*}
    &\ \phi(X_k + S_k,\Tcal_k) - \phi(X_k,\Tcal_k) \\
    =&\ \Tcal_k(f(X_k + S_k) + r(X_k + S_k))+ \|c(X_k + S_k)\|_2 \\
    &\ - (\Tcal_k(f(X_k)+r(X_k))+ \|c(X_k)\|_2) \\
    \leq&\ \Tcal_k(\nabla f(X_k)^TS_k + \tfrac{L_g}{2}\|S_k\|_2^2) + \Tcal_k(r(X_k+S_k) - r(X_k)) \\
    &\ + \|c(X_k)+\nabla c(X_k)^TS_k\|_2 - \|c(X_k)\|_2 + \tfrac{L_J}{2} \|S_k\|_2^2 \\
    =&\ -\tfrac{\Tcal_k}{2A_k}\|S_k\|_2^2 + \tfrac{\Tcal_k L_g + L_J}{2} \|S_k\|_2^2 + \Tcal_k(\nabla f(X_k) - G_k)^TS_k + \Tcal_kG_k^TS_k \\
    &\ + \Tcal_k(r(X_k+S_k) - r(X_k)) + \tfrac{\Tcal_k}{2A_k} \|S_k\|_2^2 + \|c(X_k)+\nabla c(X_k)^TV_k\|_2 - \|c(X_k)\|_2,
  \end{align*} 
  which completes the proof.
  \qed
\end{proof}

Lemma~\ref{lem:merit-reduction} provides an upper bound on the change in the merit 
function. The next lemma bounds the term $\Tcal_k(\nabla f(X_k) - G_k)^TS_k$ appearing in this upper bound in terms of measures related to stationarity.

\begin{lemma}\label{lem:gradient_error_bound}
  Conditioned on $k < T_\epsilon$, the following hold.
  \begin{itemize}
    \item[(i)] If $\|G_k - \nabla f(X_k)\|_2 \leq \kappa \sqrt{ \|S_k\|_2^2 + \|c(X_k)\|_2 - \|c(X_k) + \nabla c(X_k)^T V_k\|_2}$, then
  \end{itemize}
  \begin{align*}
    \Tcal_k(\nabla f(X_k) - G_k)^TS_k \leq&\  \(\tfrac{\kappa^2}{4\eta_1} + \eta_1\) \Tcal_k A_k \tfrac{\|S_k\|_2^2}{A_k} \\
    &\ + \eta_1 (\|c(X_k)\|_2 - \|c(X_k) + \nabla c(X_k)^T V_k\|_2).
  \end{align*}
  \begin{itemize}
    \item[(ii)] If $\|G_k - \nabla f(X_k)\|_2 \leq \epsilon_g$, then
  \end{itemize}
  \begin{align*}
    \Tcal_k(\nabla f(X_k) - G_k)^TS_k \leq&\  \(\tfrac{\eta^2}{4\eta_1(1-2\eta)^2} + \eta_1\)\Tcal_k\tfrac{\|S_k\|_2^2}{A_k} \\
    &\ + \eta_1(\|c(X_k)\|_2 - \|c(X_k) + \nabla c(X_k)^T V_k\|_2).
  \end{align*}
\end{lemma}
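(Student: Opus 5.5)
We prove the two parts separately. Throughout we write $D_k := \|c(X_k)\|_2 - \|c(X_k) + \nabla c(X_k)^T V_k\|_2$. Lemma~\ref{lem:lin-infeas-reduction} gives $D_k \geq 0$. The update rule~\eqref{eq:tau-update} together with $\tau_{-1} \leq 1$ gives $\Tcal_k \leq 1$, and we will also use $A_k \leq \alpha_0 \leq 1$.

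\emph{Part (i).} This is Cauchy--Schwarz followed by Young's inequality. Put $Q_k := \|S_k\|_2^2 + D_k$. By hypothesis,
$$
\Tcal_k(\nabla f(X_k) - G_k)^T S_k \leq \Tcal_k \kappa \sqrt{Q_k}\,\|S_k\|_2 .
$$
Young's inequality in the form $ab \leq \tfrac{a^2}{4\eta_1} + \eta_1 b^2$, with $a = \kappa\|S_k\|_2$ and $b = \sqrt{Q_k}$, turns the right-hand side into
$$
\Tcal_k\Bigl(\tfrac{\kappa^2}{4\eta_1} + \eta_1\Bigr)\|S_k\|_2^2 + \Tcal_k \eta_1 D_k .
$$
We then use $\Tcal_k \leq 1$ and $D_k \geq 0$ to drop $\Tcal_k$ in front of the last term. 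Finally, writing $\|S_k\|_2^2 = A_k \|S_k\|_2^2 / A_k$ puts the bound in the stated form. Part (i) does not need $k < T_\epsilon$.

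\emph{Part (ii).} Here the noise floor $\epsilon_g$ must be converted into stationarity quantities, and this is where $k < T_\epsilon$ enters. Since $k < T_\epsilon$, we have $\Psi_k > \epsilon/\max\{\sqrt{2},\kappa_T\}$, and~\eqref{eq:epsilon_lower_bound} gives $\epsilon/\max\{\sqrt{2},\kappa_T\} > \epsilon_g/\eta$. Hence $\epsilon_g < \eta \Psi_k$. The argument then has three steps.
\begin{enumerate}
\item Remove the complementarity term from $\Psi_k$. By~\eqref{kkt:complementarity} for the true subproblem, $\min\{X_k + \Sktrue, -\Zktrue\} = 0$. Since $\min\{\cdot,\cdot\}$ is componentwise $1$-Lipschitz in its first argument, $\|\min\{X_k, -\Zktrue\}\|_2 \leq \|\Sktrue\|_2 \leq \|\Sktrue\|_2 / A_k$. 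Therefore $\Psi_k^2 \leq \|\Sktrue\|_2^2/A_k^2 + D_k/A_k$.
\item Pass from $\Sktrue$ to $S_k$. Lemma~\ref{lem:uk-gk-relation} gives $\|\Sktrue\|_2/A_k \leq \|S_k\|_2/A_k + \epsilon_g$. Applying the triangle inequality in $\R{2}$ to the vector $(\|\Sktrue\|_2/A_k, \sqrt{D_k/A_k})$ yields $\Psi_k \leq R_k + \epsilon_g$, where $R_k := \sqrt{\|S_k\|_2^2/A_k^2 + D_k/A_k}$. Combined with $\epsilon_g < \eta\Psi_k$, this gives $(1-\eta)\epsilon_g < \eta R_k$, and hence
$$
\epsilon_g \leq \tfrac{\eta}{1-2\eta} R_k .
$$
\item Apply Young's inequality. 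We have
$$
\Tcal_k(\nabla f(X_k) - G_k)^T S_k \leq \Tcal_k \epsilon_g \|S_k\|_2 \leq \Tcal_k\Bigl(\tfrac{A_k \epsilon_g^2}{4\eta_1} + \eta_1 \tfrac{\|S_k\|_2^2}{A_k}\Bigr).
$$
Substituting the bound from step 2 into the first term gives
$$
\tfrac{A_k \epsilon_g^2}{4\eta_1} \leq \tfrac{\eta^2}{4\eta_1(1-2\eta)^2}\Bigl(\tfrac{\|S_k\|_2^2}{A_k} + D_k\Bigr).
$$
\end{enumerate}

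It remains to absorb the resulting term $\Tcal_k \tfrac{\eta^2}{4\eta_1(1-2\eta)^2} D_k$ into $\eta_1 D_k$. This needs $\tfrac{\eta^2}{(1-2\eta)^2} \leq 4\eta_1^2/\Tcal_k$. From the computation following Definition~\ref{def:stoppingtime}, the choice of $\eta$ gives $\tfrac{\eta}{1-2\eta} < \abar$, so $\tfrac{\eta^2}{(1-2\eta)^2} < \abar^2 = 2\eta_1(1-3\eta_1)$. The required inequality then reduces to $2\eta_1(1-3\eta_1) \leq 4\eta_1^2/\Tcal_k$, i.e., $\Tcal_k(1-3\eta_1) \leq 2\eta_1$. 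This holds when $\eta_1 \geq 1/5$ (using $\Tcal_k \leq 1$) or when $\Tcal_k$ is small, but not in general, because $\eta_1 \in (0,1/3)$ may be small while $\Tcal_k$ is close to one.

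This absorption step is the main obstacle. I expect it to need a sharper handling of the $D_k$ contribution than the one above. One option is to apply Young's inequality with a weight that also involves $\Tcal_k$. Another is to split into the cases $\|S_k\|_2^2/A_k^2 \geq D_k/A_k$ and $\|S_k\|_2^2/A_k^2 < D_k/A_k$, bounding $\epsilon_g$ by $\tfrac{\sqrt2\,\eta}{1-2\eta}\|S_k\|_2/A_k$ in the first case and by $\tfrac{\sqrt2\,\eta}{1-2\eta}\sqrt{D_k/A_k}$ in the second, and applying Young's inequality separately in each. The other steps are routine.
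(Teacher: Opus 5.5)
Your part (i) and steps 1--2 of part (ii) are correct. In fact, your triangle-inequality argument gives the slightly sharper $\epsilon_g < \tfrac{\eta}{1-\eta}R_k$, whereas the paper compares squared quantities to get a factor $1/\sqrt{1-2\eta}$. Your Lipschitz argument for $\|\min\{X_k,-Z_k^{\mathrm{true}}\}\|_2 \le \|\Sktrue\|_2$ also replaces the citation the paper uses.

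The gap is the one you flag yourself, and as written part (ii) is not proved. In step 3 you apply Young's inequality to $\epsilon_g\cdot\|S_k\|_2$, putting the weight $\eta_1/A_k$ on $\|S_k\|_2^2$ and $A_k/(4\eta_1)$ on $\epsilon_g^2$. Since $\epsilon_g^2$ is the only term that brings in $D_k$, this leaves $D_k$ with coefficient $\Tcal_k\eta^2/(4\eta_1(1-2\eta)^2)$, which is not bounded by $\eta_1$ in general.

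The missing idea is simply to swap the weights. Set $\beta := \eta/(1-2\eta)$ and first use $\epsilon_g \le \beta R_k$ to get $\Tcal_k(\nabla f(X_k)-G_k)^TS_k \le \beta\Tcal_k\|S_k\|_2 R_k$. Then apply Young's inequality to $\|S_k\|_2\cdot R_k$ so that $R_k^2$, the factor containing $D_k$, receives the small coefficient $\eta_1 A_k$:
\[
\beta\|S_k\|_2 R_k \le \frac{\beta^2}{4\eta_1}\frac{\|S_k\|_2^2}{A_k} + \eta_1 A_k R_k^2 = \Bigl(\frac{\beta^2}{4\eta_1} + \eta_1\Bigr)\frac{\|S_k\|_2^2}{A_k} + \eta_1 D_k .
\]
Multiply by $\Tcal_k$ and use $\Tcal_k \le 1$ only on the $\eta_1 D_k$ term. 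This gives exactly the claimed bound, with no condition linking $\eta$, $\eta_1$ and $\Tcal_k$. It is the paper's choice $\delta_Y = 2A_k\eta_1(1-2\eta)/\eta$ with $a=\|S_k\|_2$ and $b=R_k$. In your own parametrization, it amounts to using $\epsilon_g\|S_k\|_2 \le \tfrac{\eta_1 A_k}{\beta^2}\epsilon_g^2 + \tfrac{\beta^2}{4\eta_1}\tfrac{\|S_k\|_2^2}{A_k}$.

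A $\Tcal_k$-dependent weight is not needed. The case split you describe would actually lose the stated constants. When $\|S_k\|_2^2/A_k^2 \ge D_k/A_k$, it gives only $\sqrt{2}\beta\,\Tcal_k\|S_k\|_2^2/A_k$. Now $\sqrt{2}\beta$ can exceed $\beta^2/(4\eta_1)+\eta_1$: for example $\beta = 2\sqrt{2}\eta_1$, which is admissible when $\eta_1<1/7$, gives $4\eta_1$ versus $3\eta_1$. Meanwhile $D_k$ may be near zero, so the right-hand side gets no help from the $\eta_1 D_k$ term.
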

\begin{proof}
  Condition on the event that $k < T_\epsilon$. First, suppose that the condition in~(i) holds. Then, along with the Cauchy-Schwarz inequality and the fact that $\nabla c(X_k)^T U_k = 0$ (see~\eqref{kkt:nullspace}) shows that
  \begin{align}
      &\ \Tcal_k(\nabla f(X_k) - G_k)^T S_k \nonumber \\ 
      \leq&\ \Tcal_k\|\nabla f(X_k) - G_k\|_2 \|S_k\|_2 \nonumber \\
      \leq&\ \kappa\Tcal_k \sqrt{\|S_k\|_2^2 + \|c(X_k)\|_2 - \|c(X_k) + \nabla c(X_k)^T V_k\|_2} \|S_k\|_2. \label{eq:part-i-cs}
  \end{align}
  Now $\Tcal_k \leq 1$ and Young's inequality with $\delta_Y \in \R{}_{>0}$ (also known as the Peter-Paul inequality), which is that $ab \leq \frac{a^2}{2\delta_Y} + \frac{\delta_Y b^2}{2}$ for any $(a,b) \in \R{}_{\geq0} \times \R{}_{\geq0}$, shows with $\delta_Y = \tfrac{2\eta_1}{\kappa}$, $a = \|S_k\|_2$, and $b = \sqrt{\|S_k\|_2^2 + \|c(X_k)\|_2 - \|c(X_k) + \nabla c(X_k)^T V_k\|_2}$ that
  \begin{equation*}
    \begin{aligned}
      & \ \kappa\Tcal_k \|S_k\|_2 \sqrt{\|S_k\|_2^2 + \|c(X_k)\|_2 - \|c(X_k) + \nabla c(X_k)^T V_k\|_2} \\
      \leq&\ \kappa \Tcal_k\( \tfrac{\|S_k\|_2^2}{2\delta_Y} + \tfrac{\delta_Y}{2}\bigl(\|S_k\|_2^2 + \|c(X_k)\|_2 - \|c(X_k) + \nabla c(X_k)^T V_k\|_2\bigr) \) \\
      =&\ \kappa \Tcal_k \(\tfrac{1}{2\delta_Y} + \tfrac{\delta_Y}{2}\) \|S_k\|_2^2 + \tfrac{\kappa\Tcal_k\,\delta_Y}{2}(\|c(X_k)\|_2 - \|c(X_k) + \nabla c(X_k)^T V_k\|_2) \\
      =&\ \(\tfrac{\kappa^2}{4\eta_1} + \eta_1\) \Tcal_k \|S_k\|_2^2 + \eta_1 \Tcal_k (\|c(X_k)\|_2 - \|c(X_k) + \nabla c(X_k)^T V_k\|_2) \\[4pt]
      \leq&\ \(\tfrac{\kappa^2}{4\eta_1} + \eta_1 \) \Tcal_k A_k \tfrac{\|S_k\|_2^2}{A_k} + \eta_1 \bigl (\|c(X_k)\|_2 - \|c(X_k) + \nabla c(X_k)^T V_k\|_2\bigr).
    \end{aligned}
  \end{equation*}
  Combining this inequality with~\eqref{eq:part-i-cs} completes the proof of part~(i).

  On the other hand, suppose the condition in (ii) holds. From $A_k \leq 1$, $\|S_k^{\mathrm{true}}\|_2 \geq \|\min\{X_k,-Z_k^{\mathrm{true}}\}\|_2$ (see~\cite[Lemma~4.6]{curtis2025proximal}), and~\eqref{eq:epsilon_lower_bound}, one obtains
  \begin{equation}\label{eq:epsilon-inequality-v2}
    \sqrt{\frac{\|S_k^{\mathrm{true}}\|_2^2}{A_k^2} + \frac{\|c(X_k)\|_2 - \|c(X_k) + \nabla c(X_k)^T V_k\|_2}{A_k}} > \frac{\epsilon}{\max\{\sqrt{2}, \kappa_T\}} > \frac{\epsilon_g}{\eta},
  \end{equation}
  which implies along with the condition in (ii) that
  \begin{equation}\label{eq:grad-diff-first-v2}
    \|G_k - \nabla f(X_k)\|_2 \leq \epsilon_g < \eta\sqrt{\frac{\|S_k^{\mathrm{true}}\|_2^2}{A_k^2} + \frac{\|c(X_k)\|_2 - \|c(X_k) + \nabla c(X_k)^T V_k\|_2}{A_k}}.
  \end{equation}
  Thus, with the Cauchy-Schwarz inequality, Lemma~\ref{lem:uk-gk-relation}, and $\nabla c(X_k)^T U_k = 0$,
  \begin{align*}
    &\ \( \frac{\|S_k^{\mathrm{true}}\|_2^2}{A_k} + \|c(X_k)\|_2 - \|c(X_k) + \nabla c(X_k)^TV_k\|_2 \) \\
    &\ - \( \frac{\|S_k\|_2^2}{A_k} + \|c(X_k)\|_2 - \|c(X_k) + \nabla c(X_k)^TV_k\|_2 \) \\
    = & \ \frac{\|S_k^{\mathrm{true}}\|_2^2 - \|S_k\|_2^2}{A_k} \\
    = & \ \frac{1}{A_k} \( 2(S_k^{\mathrm{true}})^T(S_k^{\mathrm{true}} - S_k) - \|S_k^{\mathrm{true}} - S_k\|_2^2 \) \\
    \leq & \ \frac{2\|S_k^{\mathrm{true}}\|_2 \|S_k^{\mathrm{true}} - S_k\|_2}{A_k} \\
    \leq&\ 2\|S_k^{\mathrm{true}}\|_2 \|G_k - \nabla f(X_k)\|_2 \\
    \leq&\ 2\eta \|S_k^{\mathrm{true}}\|_2 \sqrt{\frac{\|S_k^{\mathrm{true}}\|_2^2}{A_k^2} + \frac{\|c(X_k)\|_2 - \|c(X_k) + \nabla c(X_k)^TV_k\|_2}{A_k}} \\
    \leq&\ 2\eta \cdot A_k \sqrt{\frac{\|S_k^{\mathrm{true}}\|_2^2}{A_k^2} + \frac{\|c(X_k)\|_2 - \|c(X_k) + \nabla c(X_k)^TV_k\|_2}{A_k}} \\
    &\quad\ \cdot \sqrt{\frac{\|S_k^{\mathrm{true}}\|_2^2}{A_k^2} + \frac{\|c(X_k)\|_2 - \|c(X_k) + \nabla c(X_k)^TV_k\|_2}{A_k}} \\
    =&\ 2\eta A_k \( \frac{\|S_k^{\mathrm{true}}\|_2^2}{A_k^2} + \frac{\|c(X_k)\|_2 - \|c(X_k) + \nabla c(X_k)^TV_k\|_2}{A_k} \) \\
    =&\ 2\eta \( \frac{\|S_k^{\mathrm{true}}\|_2^2}{A_k} + \|c(X_k)\|_2 - \|c(X_k) + \nabla c(X_k)^TV_k\|_2 \),
  \end{align*}
  which, after rearrangement, gives
  \begin{align*}
    &\ \frac{\|S_k^{\mathrm{true}}\|_2^2}{A_k} + \|c(X_k)\|_2 - \|c(X_k) + \nabla c(X_k)^T V_k\|_2 \\
    \leq&\ \frac{1}{1-2\eta} \(\frac{\|S_k\|_2^2}{A_k} + \|c(X_k)\|_2 - \|c(X_k) + \nabla c(X_k)^T V_k\|_2\).
  \end{align*}
  Dividing both sides by $A_k$ and taking square roots yields
  \begin{align}
    &\ \sqrt{\frac{\|S_k^{\mathrm{true}}\|_2^2}{A_k^2} + \frac{\|c(X_k)\|_2 - \|c(X_k) + \nabla c(X_k)^T V_k\|_2}{A_k}} \nonumber \\
    \leq&\ \frac{1}{\sqrt{1-2\eta}} \sqrt{\frac{\|S_k\|_2^2}{A_k^2} + \frac{\|c(X_k)\|_2 - \|c(X_k) + \nabla c(X_k)^T V_k\|_2}{A_k}}. \label{eq:true-to-computed-sqrt}
  \end{align}
  Combining the Cauchy--Schwarz inequality, \eqref{eq:grad-diff-first-v2}, \eqref{eq:true-to-computed-sqrt}, and the fact that $1-2\eta\in(0,1)$ (since $\eta \in (0,\tfrac12)$ under Definition~\ref{def:stoppingtime}), one finds that
  \begin{align}
    &\ \Tcal_k(\nabla f(X_k) - G_k)^T S_k \nonumber \\
    \leq&\ \Tcal_k\|\nabla f(X_k) - G_k\|_2 \|S_k\|_2 \nonumber  \\
    \leq&\ \eta\Tcal_k \|S_k\|_2 \sqrt{\frac{\|S_k^{\mathrm{true}}\|_2^2}{A_k^2} + \frac{\|c(X_k)\|_2 - \|c(X_k) + \nabla c(X_k)^T V_k\|_2}{A_k}} \nonumber \\
    \leq&\ \frac{\eta \Tcal_k}{\sqrt{1-2\eta}} \|S_k\|_2 \sqrt{\frac{\|S_k\|_2^2}{A_k^2} + \frac{\|c(X_k)\|_2 - \|c(X_k) + \nabla c(X_k)^T V_k\|_2}{A_k}} \nonumber \\
    \leq&\ \frac{\eta\Tcal_k}{1-2\eta} \|S_k\|_2 \sqrt{\frac{\|S_k\|_2^2}{A_k^2} + \frac{\|c(X_k)\|_2 - \|c(X_k) + \nabla c(X_k)^T V_k\|_2}{A_k}}. \label{eq:part-ii-pre-young}
  \end{align}
  To bound the right-hand side of~\eqref{eq:part-ii-pre-young}, one can apply Young's inequality with $\delta_Y = \tfrac{2A_k\eta_1(1-2\eta)}{\eta} > 0$, $a = \|S_k\|_2$, and $b = \sqrt{\frac{\|S_k\|_2^2}{A_k^2} + \frac{\|c(X_k)\|_2 - \|c(X_k) + \nabla c(X_k)^T V_k\|_2}{A_k}}$ and use the fact that $\Tcal_k \leq 1$ to obtain
  \begin{align*}
    & \ \frac{\eta \Tcal_k}{1-2\eta}\|S_k\|_2 \sqrt{\frac{\|S_k\|_2^2}{A_k^2} + \frac{\|c(X_k)\|_2 - \|c(X_k) + \nabla c(X_k)^T V_k\|_2}{A_k}} \\
    \leq& \ \frac{\eta \Tcal_k}{1-2\eta}\(
    \frac{\|S_k\|_2^2}{2\delta_Y} + \frac{\delta_Y}{2}\( \frac{\|S_k\|_2^2}{A_k^2} + \frac{\|c(X_k)\|_2 - \|c(X_k) + \nabla c(X_k)^T V_k\|_2}{A_k} \) \) \\
    =& \ \frac{\eta \Tcal_k}{1-2\eta}\(\frac{1}{2\delta_Y} + \frac{\delta_Y}{2A_k^2}\) \|S_k\|_2^2 + \frac{\tfrac{\eta \Tcal_k}{1-2\eta} \delta_Y}{2A_k} \(\|c(X_k)\|_2 - \|c(X_k) + \nabla c(X_k)^T V_k\|_2\) \\
    =& \ \dfrac{\eta\Tcal_k}{1 - 2\eta} \(\frac{\frac{\eta}{1-2\eta}}{4A_k\eta_1} + \frac{A_k\eta_1}{\frac{\eta}{1-2\eta} A_k^2}\) \|S_k\|_2^2 + \eta_1 \Tcal_k(\|c(X_k)\|_2 - \|c(X_k) + \nabla c(X_k)^T V_k\|_2) \\
    \leq& \ \(\frac{\eta^2 \Tcal_k}{4\eta_1(1-2\eta)^2} + \eta_1\Tcal_k\) \frac{\|S_k\|_2^2}{A_k} + \eta_1 \bigl(\|c(X_k)\|_2 - \|c(X_k) + \nabla c(X_k)^T V_k\|_2\bigr) \\
    =& \ \(\frac{\eta^2}{4\eta_1(1-2\eta)^2} + \eta_1\) \Tcal_k\frac{\|S_k\|_2^2}{A_k} + \eta_1\bigl(\|c(X_k)\|_2 - \|c(X_k) + \nabla c(X_k)^T V_k\|_2\bigr).
  \end{align*}
  Combining this inequality with~\eqref{eq:part-ii-pre-young} completes the proof of part~(ii).
  \qed
\end{proof}

We now use Lemma~\ref{lem:merit-reduction} and 
Lemma~\ref{lem:gradient_error_bound} to obtain a refined upper bound on 
the merit function decrease in iterations before the stopping time condition holds.
\begin{lemma}~\label{lem:merit-reduction-new}
  Conditioned on $k < T_\epsilon$, the following hold.
  \begin{itemize}
    \item[(i)] If $\|G_k - \nabla f(X_k)\|_2 \leq \kappa \sqrt{ \|S_k\|_2^2 + \|c(X_k)\|_2 - \|c(X_k) + \nabla c(X_k)^TV_k\|_2}$, then 
    \begin{align*}
      &\ \phi(X_k + S_k,\Tcal_k) - \phi(X_k,\Tcal_k) \\
      \leq &\ -\frac{\Tcal_k}{2A_k}\|S_k\|_2^2 + \frac{\Tcal_k L_g + L_J}{2} \|S_k\|_2^2 + \(\frac{\kappa^2}{4\eta_1} + \eta_1\) A_k \Tcal_k\frac{\|S_k\|_2^2}{A_k} \\
      &\ + \eta_1 (\|c(X_k) + \nabla c(X_k)^TV_k\|_2 - \|c(X_k)\|_2).
    \end{align*}
    \item[(ii)] If $\|G_k - \nabla f(X_k)\|_2 \leq \epsilon_g$, then 
    \begin{align*}
      &\ \phi(X_k + S_k,\Tcal_k) - \phi(X_k,\Tcal_k) \\
      \leq&\ -\frac{\Tcal_k}{2A_k}\|S_k\|_2^2 + \frac{\Tcal_k L_g + L_J}{2} \|S_k\|_2^2 + \(\frac{\eta^2}{4\eta_1(1-2\eta)^2} + \eta_1\) \Tcal_k \frac{\|S_k\|_2^2}{A_k} \\
      &\ + \eta_1 (\|c(X_k) + \nabla c(X_k)^TV_k\|_2 - \|c(X_k)\|_2).
    \end{align*}
  \end{itemize}
\end{lemma}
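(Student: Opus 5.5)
The plan is to start from the bound of Lemma~\ref{lem:merit-reduction} and show that its three middle terms,
\[
\Tcal_k G_k^T S_k + \Tcal_k\bigl(r(X_k+S_k) - r(X_k)\bigr) + \tfrac{\Tcal_k}{2A_k}\|S_k\|_2^2,
\]
together with the linearized-infeasibility term, collapse to something that is at most
\[
(1-\eta_1)\bigl(\|c(X_k)+\nabla c(X_k)^TV_k\|_2 - \|c(X_k)\|_2\bigr).
\]
After that, the remaining gradient-error term $\Tcal_k(\nabla f(X_k)-G_k)^TS_k$ is handled by Lemma~\ref{lem:gradient_error_bound}. Part~(i) of that lemma gives case~(i) of the statement, and part~(ii) gives case~(ii). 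In each case the $+\eta_1(\|c(X_k)\|_2-\|c(X_k)+\nabla c(X_k)^TV_k\|_2)$ contribution cancels against part of the $(1-\eta_1)$ coefficient, leaving exactly $\eta_1(\|c(X_k)+\nabla c(X_k)^TV_k\|_2-\|c(X_k)\|_2)$. The quadratic terms $-\tfrac{\Tcal_k}{2A_k}\|S_k\|_2^2+\tfrac{\Tcal_kL_g+L_J}{2}\|S_k\|_2^2$ are carried over unchanged.

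The key step is the merit-parameter argument. By the definition of $q_k$, the three middle terms equal exactly $\Tcal_k(q_k(S_k)-q_k(0))$. I will split into two cases.
\begin{itemize}
\item If $q_k(S_k)-q_k(0)\le 0$, this quantity is nonpositive. The linearized decrease $\|c(X_k)\|_2-\|c(X_k)+\nabla c(X_k)^TV_k\|_2$ is nonnegative by Lemma~\ref{lem:lin-infeas-reduction}, so
\[
\|c(X_k)+\nabla c(X_k)^TV_k\|_2-\|c(X_k)\|_2 \le (1-2\eta_1)\bigl(\|c(X_k)+\nabla c(X_k)^TV_k\|_2-\|c(X_k)\|_2\bigr).
\]
\item Otherwise, the update rule~\eqref{eq:tau-update} guarantees $\Tcal_k\le\Tcal_{k,\rm trial}$. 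Then~\eqref{def:tauktrial}, together with $\nabla c(X_k)^TU_k=0$ (so that $S_k$ may be replaced by $V_k$ inside the constraint linearization), gives
\[
\Tcal_k\bigl(q_k(S_k)-q_k(0)\bigr)\le(1-2\eta_1)\bigl(\|c(X_k)\|_2-\|c(X_k)+\nabla c(X_k)^TV_k\|_2\bigr).
\]
\end{itemize}
In both cases the sum of the $q$-change and the linearized term is at most $-2\eta_1(\|c(X_k)\|_2-\|c(X_k)+\nabla c(X_k)^TV_k\|_2)$. This is even stronger than the $(1-\eta_1)$ version needed above: after adding the $\eta_1$ term from Lemma~\ref{lem:gradient_error_bound}, one gets $-\eta_1$ times the decrease, matching the claim.

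The main obstacle is only bookkeeping. One must confirm that $\Tcal_k\le\Tcal_{k,\rm trial}$ holds after the update in every branch of~\eqref{eq:tau-update}, which it does since $\Tcal_k$ is either $\Tcal_{k-1}\le\Tcal_{k,\rm trial}$ or a minimum involving $\Tcal_{k,\rm trial}$. One must also confirm that the condition $k<T_\epsilon$ is used only through Lemma~\ref{lem:gradient_error_bound}. Finally, in case~(i) the coefficient should be written as $(\tfrac{\kappa^2}{4\eta_1}+\eta_1)A_k\Tcal_k\tfrac{\|S_k\|_2^2}{A_k}$, exactly as that lemma provides.
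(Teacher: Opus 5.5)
Your proposal is correct and is essentially the paper's proof: splitting on the sign of $q_k(S_k)-q_k(0)$, and using $\Tcal_k\le\Tcal_{k,\rm trial}$ and $\nabla c(X_k)^TU_k=0$ in the second case, yields the intermediate bound $-2\eta_1 D$ with $D:=\|c(X_k)\|_2-\|c(X_k)+\nabla c(X_k)^TV_k\|_2\ge 0$, which is then combined with Lemma~\ref{lem:merit-reduction} and Lemma~\ref{lem:gradient_error_bound} exactly as you do. Drop the $(1-\eta_1)$ framing of your first paragraph: since $\eta_1<1/3$, the bound $-2\eta_1 D$ is weaker, not stronger, than $-(1-\eta_1)D$, the latter can fail in your second case, and it is not needed; likewise, in your first case the inequality you want is simply $-D\le -2\eta_1 D$, not $-D\le-(1-2\eta_1)D$.
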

\begin{proof}
%
  Condition on the event that $k < T_\epsilon$. Let us first establish a bound that is common to both part (i) and part (ii). In particular, let us show that
  \begin{align}
    &\ \Tcal_k(G_k^T S_k + \tfrac{1}{2A_k}\|S_k\|_2^2 + r(X_k+S_k) - r(X_k)) \nonumber \\
    &\ + \|c(X_k) + \nabla c(X_k)^T V_k\|_2 - \|c(X_k)\|_2 \nonumber \\
    \leq&\ -2\eta_1(\|c(X_k)\|_2 - \|c(X_k) + \nabla c(X_k)^T V_k\|_2). \label{eq:vk-terms-bound}
  \end{align}
  Note that the term $\|c(X_k)\|_2 - \|c(X_k) + \nabla c(X_k)^T V_k\|_2$ in the right-hand side of~\eqref{eq:vk-terms-bound} is nonnegative due to Lemma~\ref{lem:lin-infeas-reduction}. Let us now consider the two cases in~\eqref{def:tauktrial}.
 
  \medskip
  \noindent\textit{Case~1:} Suppose that $G_k^T S_k + \tfrac{1}{2A_k}\|S_k\|_2^2 + r(X_k+S_k) - r(X_k) \leq 0$. Since $\eta_1 \in (0,1/3)$, one has $0 < 2\eta_1 < 1$, and therefore
  \begin{align*}
    &\ \Tcal_k(G_k^T S_k + \tfrac{1}{2A_k}\|S_k\|_2^2 + r(X_k+S_k) - r(X_k)) \\
    &\ + \|c(X_k) + \nabla c(X_k)^T V_k\|_2 - \|c(X_k)\|_2 \\
    \leq&\ -(\|c(X_k)\|_2 - \|c(X_k) + \nabla c(X_k)^T V_k\|_2) \\
    \leq&\ -2\eta_1(\|c(X_k)\|_2 - \|c(X_k) + \nabla c(X_k)^T V_k\|_2),
  \end{align*}
  thus verifying that~\eqref{eq:vk-terms-bound} holds in this case, as claimed. 

  \medskip
  \noindent\textit{Case~2:} Suppose that $G_k^T S_k + \tfrac{1}{2A_k}\|S_k\|_2^2 + r(X_k+S_k) - r(X_k) > 0$. It follows from~\eqref{def:tauktrial} and Lemma~\ref{lem:merit-bounded} that
  \begin{equation*}
    \Tcal_{k,\mathrm{trial}} = \frac{(1-2\eta_1)(\|c(X_k)\|_2 - \|c(X_k) + \nabla c(X_k)^T S_k\|_2)}{G_k^T S_k + \tfrac{1}{2A_k}\|S_k\|_2^2 + r(X_k+S_k) - r(X_k)}
  \end{equation*}
  is finite and strictly positive with $\Tcal_{k,\mathrm{trial}} \geq \Tcal_k > 0$. Combining this fact with the value of $\Tcal_{k,\mathrm{trial}}$ above, $G_k^T S_k + \tfrac{1}{2A_k}\|S_k\|_2^2 + r(X_k+S_k) - r(X_k) > 0$, and the fact that $\|c(X_k) + \nabla c(X_k)^T S_k\|_2 = \|c(X_k) + \nabla c(X_k)^T V_k\|_2$ (see~\eqref{kkt:nullspace}) gives
  \begin{align*}
    &\ \Tcal_k\bigl(G_k^T S_k + \tfrac{1}{2A_k}\|S_k\|_2^2 + r(X_k+S_k) - r(X_k)\bigr) \\
    \leq&\ (1-2\eta_1)(\|c(X_k)\|_2 - \|c(X_k) + \nabla c(X_k)^T V_k\|_2).
  \end{align*}
  Therefore, one finds that
  \begin{align*}
    &\ \Tcal_k (G_k^T S_k + \tfrac{1}{2A_k}\|S_k\|_2^2 + r(X_k+S_k) - r(X_k)) \\
    &\ + \|c(X_k) + \nabla c(X_k)^T V_k\|_2 - \|c(X_k)\|_2 \\
    \leq&\ (1-2\eta_1)(\|c(X_k)\|_2 - \|c(X_k) + \nabla c(X_k)^T V_k\|_2) \\
    &\ - (\|c(X_k)\|_2 - \|c(X_k) + \nabla c(X_k)^T V_k\|_2) \\
    =&\ -2\eta_1(\|c(X_k)\|_2 - \|c(X_k) + \nabla c(X_k)^T V_k\|_2),
  \end{align*}
  again verifying that~\eqref{eq:vk-terms-bound} holds. Overall, ~\eqref{eq:vk-terms-bound} holds in all cases, as claimed.

  Now suppose the condition in (i) holds. Lemma~\ref{lem:merit-reduction} gives
  \begin{align*}
    &\ \phi(X_k + S_k, \Tcal_k) - \phi(X_k, \Tcal_k) \\
    \leq&\ -\tfrac{\Tcal_k}{2A_k} \|S_k\|_2^2 + \tfrac{\Tcal_k L_g + L_J}{2}\|S_k\|_2^2 + \Tcal_k(\nabla f(X_k) - G_k)^T S_k \\
    &\ + \Tcal_k (G_k^T S_k + \tfrac{1}{2A_k} \|S_k\|_2^2 + r(X_k+S_k) - r(X_k)) \\
    &\ + \|c(X_k) + \nabla c(X_k)^T V_k\|_2 - \|c(X_k)\|_2.
  \end{align*}
  Using Lemma~\ref{lem:gradient_error_bound}(i) and~\eqref{eq:vk-terms-bound} gives
  \begin{align*}
    &\ \phi(X_k + S_k, \Tcal_k) - \phi(X_k, \Tcal_k) \\
    \leq&\ -\tfrac{\Tcal_k}{2A_k} \|S_k\|_2^2 + \tfrac{\Tcal_k L_g + L_J}{2} \|S_k\|_2^2 + \(\tfrac{\kappa^2}{4\eta_1} + \eta_1\) \Tcal_k A_k \tfrac{\|S_k\|_2^2}{A_k} \\
    &\ + \eta_1(\|c(X_k)\|_2 - \|c(X_k) + \nabla c(X_k)^T V_k\|_2) \\
    &\ - 2\eta_1(\|c(X_k)\|_2 - \|c(X_k) + \nabla c(X_k)^T V_k\|_2) \\
    =& -\tfrac{\Tcal_k}{2A_k} \|S_k\|_2^2 + \tfrac{\Tcal_k L_g + L_J}{2} \|S_k\|_2^2 + \(\tfrac{\kappa^2}{4\eta_1} + \eta_1\) A_k \Tcal_k \tfrac{\|S_k\|_2^2}{A_k} \\
    &\ + \eta_1(\|c(X_k) + \nabla c(X_k)^T V_k\|_2 - \|c(X_k)\|_2),
  \end{align*}
  which is the bound claimed in part~(i).

  Now suppose the condition in (ii) holds. Lemma~\ref{lem:merit-reduction}, Lemma~\ref{lem:gradient_error_bound}(ii), and~\eqref{eq:vk-terms-bound} give
  \begin{align*}
    &\ \phi(X_k + S_k, \Tcal_k) - \phi(X_k, \Tcal_k) \\
    \leq&\ -\tfrac{\Tcal_k}{2A_k} \|S_k\|_2^2 + \tfrac{\Tcal_k L_g + L_J}{2} \|S_k\|_2^2 + \(\tfrac{\eta^2}{4\eta_1(1-2\eta)^2} + \eta_1\) \Tcal_k \tfrac{\|S_k\|_2^2}{A_k} \\
    &\ + \eta_1(\|c(X_k)\|_2 - \|c(X_k) + \nabla c(X_k)^T V_k\|_2) \\
    &\ - 2\eta_1(\|c(X_k)\|_2 - \|c(X_k) + \nabla c(X_k)^T V_k\|_2) \\
    =&\ -\tfrac{\Tcal_k}{2A_k} \|S_k\|_2^2 + \tfrac{\Tcal_k L_g + L_J}{2} \|S_k\|_2^2 + \(\tfrac{\eta^2}{4\eta_1(1-2\eta)^2} + \eta_1\) \Tcal_k \tfrac{\|S_k\|_2^2}{A_k} \\
    &\ + \eta_1(\|c(X_k) + \nabla c(X_k)^T V_k\|_2 - \|c(X_k)\|_2),
  \end{align*}
  which is the bound claimed in part (ii).
  \qed
\end{proof}

Our next result is needed because the stopping time is defined using $S_k^{\mathrm{true}}$, whereas the algorithm only has access to $S_k$. 

\begin{lemma}\label{lem:sk-difference}
  Conditioned on $k < T_\epsilon$, the following hold.
  \begin{itemize}
    \item[(i)] If $\|G_k - \nabla f(X_k)\|_2 \leq \kappa \sqrt{ \|S_k\|_2^2 + \|c(X_k)\|_2 - \|c(X_k) + \nabla c(X_k)^TV_k\|_2}$, then
    \begin{align*}
      &\ \(\Tcal_k \frac{\|S_k\|_2^2}{2A_k} + \|c(X_k)\|_2 - \|c(X_k) + \nabla c(X_k)^T V_k\|_2\) \\
      \geq&\ \frac{1}{(1+\kappa)^2} \(\Tcal_k \frac{\|S_k^{\mathrm{true}}\|_2^2}{2A_k} + \|c(X_k)\|_2 - \|c(X_k) + \nabla c(X_k)^T V_k\|_2\).
    \end{align*}
    \item[(ii)] If $\|G_k - \nabla f(X_k)\|_2 \leq \epsilon_g$, then
    \begin{align*}
      &\ \(\Tcal_k \frac{\|S_k\|_2^2}{2A_k} + \|c(X_k)\|_2 - \|c(X_k) + \nabla c(X_k)^T V_k\|_2\) \\
      \geq&\ (1 - 2\eta) \(\Tcal_k \frac{\|S_k^{\mathrm{true}}\|_2^2}{2A_k} + \|c(X_k)\|_2 - \|c(X_k) + \nabla c(X_k)^T V_k\|_2\).
    \end{align*}
  \end{itemize}
\end{lemma}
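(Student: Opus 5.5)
Write $D_k := \|c(X_k)\|_2 - \|c(X_k) + \nabla c(X_k)^T V_k\|_2$, which is nonnegative by Lemma~\ref{lem:lin-infeas-reduction}. Both parts compare $\|S_k\|_2$ with $\|S_k^{\mathrm{true}}\|_2$ using Lemma~\ref{lem:uk-gk-relation}, namely $\|S_k^{\mathrm{true}}\|_2 \le \|S_k\|_2 + A_k\|G_k - \nabla f(X_k)\|_2$. The additive term $D_k$ is handled by the elementary fact that if $a' \ge c\,a$ with $c\in(0,1]$, then $a' + D \ge c(a + D)$ for $D\ge 0$.

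\emph{Part (ii).} This part essentially repeats the computation already done in the proof of Lemma~\ref{lem:gradient_error_bound}(ii), which established \eqref{eq:true-to-computed-sqrt}. Squaring \eqref{eq:true-to-computed-sqrt} and multiplying by $A_k$ gives
\[
\tfrac{\|S_k^{\mathrm{true}}\|_2^2}{A_k} + D_k \le \tfrac{1}{1-2\eta}\Bigl(\tfrac{\|S_k\|_2^2}{A_k} + D_k\Bigr).
\]
That derivation used only the hypothesis $\|G_k-\nabla f(X_k)\|_2\le\epsilon_g$ and $k<T_\epsilon$, through \eqref{eq:epsilon-inequality-v2}. I would rerun it with the weights $\Tcal_k/(2A_k)$ in place of $1/A_k$. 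The key step is
\[
\|S_k^{\mathrm{true}}\|_2^2-\|S_k\|_2^2 \le 2\|S_k^{\mathrm{true}}\|_2\,A_k\|G_k-\nabla f(X_k)\|_2 \le 2\eta A_k\|S_k^{\mathrm{true}}\|_2\,\Psi',
\]
where $\Psi'$ is the square root appearing in \eqref{eq:grad-diff-first-v2}. Bounding $\|S_k^{\mathrm{true}}\|_2\le A_k\Psi'$ gives $\|S_k^{\mathrm{true}}\|_2^2-\|S_k\|_2^2\le 2\eta A_k(\|S_k^{\mathrm{true}}\|_2^2/A_k + D_k)$.

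The main obstacle is the mismatch of weights. After multiplying by $\Tcal_k/(2A_k)$, the right side is $\eta\Tcal_k(\|S_k^{\mathrm{true}}\|_2^2/A_k + D_k)$, and we need it bounded by $2\eta(\Tcal_k\|S_k^{\mathrm{true}}\|_2^2/(2A_k) + D_k)$. This holds because $\Tcal_k\le 1$: the $D_k$ term has coefficient $\eta\Tcal_k\le 2\eta$, and the $\|S_k^{\mathrm{true}}\|_2^2$ term matches exactly. Rearranging then yields $(1-2\eta)(\Tcal_k\|S_k^{\mathrm{true}}\|_2^2/(2A_k)+D_k)\le \Tcal_k\|S_k\|_2^2/(2A_k)+D_k$. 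If the weight bookkeeping fails, the fallback is to use \eqref{eq:epsilon-inequality-v2} directly to absorb the cross term.

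\emph{Part (i).} Here the hypothesis gives $\|G_k-\nabla f(X_k)\|_2\le\kappa\sqrt{\|S_k\|_2^2+D_k}$, so
\[
\|S_k^{\mathrm{true}}\|_2\le \|S_k\|_2 + \kappa A_k\sqrt{\|S_k\|_2^2+D_k}.
\]
I would set $a:=\sqrt{\Tcal_k\|S_k\|_2^2/(2A_k)+D_k}$ and aim to show $\sqrt{\Tcal_k/(2A_k)}\,\|S_k^{\mathrm{true}}\|_2\le (1+\kappa)a - \sqrt{D_k}$, or some variant of this. One then uses the triangle inequality in $\mathbb{R}^2$ on the vectors $(\sqrt{\Tcal_k/(2A_k)}\|S_k^{\mathrm{true}}\|_2,\sqrt{D_k})$.

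The delicate step is bounding $\sqrt{\Tcal_k/(2A_k)}\,\kappa A_k\sqrt{\|S_k\|_2^2+D_k}$ by $\kappa a$. This requires $\Tcal_k A_k\|S_k\|_2^2/2\le \Tcal_k\|S_k\|_2^2/(2A_k)$, which holds since $A_k\le1$, together with $\Tcal_k A_k D_k/2\le D_k$, which holds since $\Tcal_k,A_k\le 1$. With these, the norm of $(\sqrt{\Tcal_k/(2A_k)}\|S_k^{\mathrm{true}}\|_2,\sqrt{D_k})$ is at most $a+\kappa a$. Squaring gives the claim with factor $1/(1+\kappa)^2$.
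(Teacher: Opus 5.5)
Your proof is correct and is essentially the paper's argument: part (ii) is the same computation (expand $\|S_k^{\mathrm{true}}\|_2^2-\|S_k\|_2^2$, apply Lemma~\ref{lem:uk-gk-relation} and \eqref{eq:grad-diff-first-v2}, bound $\|S_k^{\mathrm{true}}\|_2\le A_k\Psi'$, and absorb the $D_k$ term via $\Tcal_k\le 1$). In part (i), your planar triangle-inequality bound, once squared, is just the paper's estimate of $\|S_k^{\mathrm{true}}-S_k\|_2^2+2S_k^T(S_k^{\mathrm{true}}-S_k)$, with the same uses of $A_k\le 1$ and $\Tcal_k\le 1$. One thing to delete: the tentative target $\sqrt{\Tcal_k/(2A_k)}\,\|S_k^{\mathrm{true}}\|_2\le(1+\kappa)a-\sqrt{D_k}$ is false in general (take $G_k=\nabla f(X_k)$, $\Tcal_k\|S_k\|_2^2/(2A_k)=D_k>0$, and $\kappa<\sqrt{2}-1$); you do not need it, since the Euclidean-norm bound $\bigl\|\bigl(\sqrt{\Tcal_k/(2A_k)}\,\|S_k^{\mathrm{true}}\|_2,\sqrt{D_k}\bigr)\bigr\|_2\le(1+\kappa)a$ that you actually establish is exactly what is required.
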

\begin{proof}
  Condition on the event that $k < T_\epsilon$.  Suppose that the condition in (i) holds.  It follows from the condition in (i), the Cauchy-Schwarz inequality, $A_k \leq 1$, $\Tcal_k \leq 1$, and Lemma~\ref{lem:uk-gk-relation} that
  \begin{align*}
      &\ \(\Tcal_k \frac{\|S_k^{\mathrm{true}}\|_2^2}{2A_k} + \|c(X_k)\|_2 - \|c(X_k) + \nabla c(X_k)^TV_k\|_2 \) \nonumber \\
      &\ - \(\Tcal_k \frac{\|S_k\|_2^2}{2A_k} + \|c(X_k)\|_2 - \|c(X_k) + \nabla c(X_k)^TV_k\|_2 \) \nonumber \\
      =&\ \Tcal_k \frac{\|S_k^{\mathrm{true}}\|_2^2 - \|S_k\|_2^2}{2A_k} \nonumber \\
      =&\ \frac{\Tcal_k}{2A_k} \( (S_k^{\mathrm{true}} -S_k)^T (S_k^{\mathrm{true}} - S_k) + 2S_k^T (S_k^{\mathrm{true}} - S_k)\) \nonumber \\
      \leq&\ \frac{\Tcal_k}{2} \(\frac{\|S_k^{\mathrm{true}} - S_k\|_2^2}{A_k^2} + 2 \|S_k\|_2 \frac{\|S_k^{\mathrm{true}} - S_k\|_2}{A_k} \) \nonumber \\
      \leq&\ \frac{\Tcal_k}{2} \bigg(\kappa^2 \(\|S_k\|_2^2 + \|c(X_k)\|_2 - \|c(X_k) + \nabla c(X_k)^TV_k\|_2\) \nonumber \\
      &\qquad\ + 2\|S_k\|_2 \kappa \sqrt{\|S_k\|_2^2 + \|c(X_k)\|_2 - \|c(X_k) + \nabla c(X_k)^TV_k\|_2} \bigg) \nonumber \\
      \leq&\ \frac{\Tcal_k(2\kappa + \kappa^2)}{2}\( \frac{\|S_k\|_2^2}{A_k} + \|c(X_k)\|_2 - \|c(X_k) + \nabla c(X_k)^T V_k\|_2 \) \nonumber \\
      \leq&\ (2\kappa + \kappa^2) \( \Tcal_k \frac{ \|S_k\|_2^2}{2A_k} + \|c(X_k)\|_2 - \|c(X_k) + \nabla c(X_k)^T V_k\|_2 \),
  \end{align*}
  which after rearrangement completes the proof of part (i).

  Now suppose the condition in (ii) holds. Using the Cauchy-Schwarz inequality, inequality~\eqref{eq:grad-diff-first-v2} in the proof of Lemma~\ref{lem:gradient_error_bound}(ii), Lemma~\ref{lem:uk-gk-relation}, and $\Tcal_k \leq 1$ it follows that 
  \begin{align*}
    &\ \( \frac{\Tcal_k\|S_k^{\mathrm{true}}\|_2^2}{2A_k} + \|c(X_k)\|_2 - \|c(X_k)+\nabla c(X_k)^TV_k\|_2 \) \\
    &\ - \( \frac{\Tcal_k\|S_k\|_2^2}{2A_k} + \|c(X_k)\|_2 - \|c(X_k) + \nabla c(X_k)^TV_k\|_2 \) \\
    =&\ \Tcal_k\frac{\(\|S_k^{\mathrm{true}}\|_2^2 - \|S_k\|_2^2\)}{2A_k} \\
    =&\ \frac{\Tcal_k}{2A_k} \( 2(S_k^{\mathrm{true}})^T(S_k^{\mathrm{true}} - S_k) - \|S_k^{\mathrm{true}} - S_k\|_2^2 \) \\
    \leq&\ \frac{\Tcal_k\|S_k^{\mathrm{true}}\|_2 \|S_k^{\mathrm{true}} - S_k\|_2}{A_k} \\
    \leq&\ \Tcal_k\|S_k^{\mathrm{true}}\|_2 \|G_k - \nabla f(X_k)\|_2 \\
    \leq&\ \eta \Tcal_k\|S_k^{\mathrm{true}}\|_2 \sqrt{\frac{\|S_k^{\mathrm{true}}\|_2^2}{A_k^2} + \frac{\|c(X_k)\|_2 - \|c(X_k) + \nabla c(X_k)^TV_k\|_2}{A_k}} \\
    \leq&\ \eta \Tcal_k A_k \sqrt{\frac{\|S_k^{\mathrm{true}}\|_2^2}{A_k^2} + \frac{2(\|c(X_k)\|_2 - \|c(X_k) + \nabla c(X_k)^TV_k\|_2)}{A_k}} \\
    &\quad\ \cdot \sqrt{\frac{\|S_k^{\mathrm{true}}\|_2^2}{A_k^2} + \frac{2(\|c(X_k)\|_2 - \|c(X_k) + \nabla c(X_k)^TV_k\|_2)}{A_k}} \\
    =&\ \eta \Tcal_k A_k \( \frac{\|S_k^{\mathrm{true}}\|_2^2}{A_k^2} + \frac{2(\|c(X_k)\|_2 - \|c(X_k) + \nabla c(X_k)^TV_k\|_2)}{A_k} \) \\
    =&\ 2\eta \Tcal_k \( \frac{\|S_k^{\mathrm{true}}\|_2^2}{2A_k} + \|c(X_k)\|_2 - \|c(X_k) + \nabla c(X_k)^TV_k\|_2 \) \\
    \leq&\ 2\eta \( \Tcal_k \frac{\|S_k^{\mathrm{true}}\|_2^2}{2A_k} + \|c(X_k)\|_2 - \|c(X_k) + \nabla c(X_k)^TV_k\|_2 \),
  \end{align*}
  which after rearrangement completes the proof for part (ii).
  \qed
\end{proof}

The following lemma bounds some key differences between iterations.

\begin{lemma}\label{lem.first_lemma}
For all $k\in\N{}$, it holds that
\bsubequations
\begin{align}
| (\Phi_k^+ - \Phi_k) - (\phi(X_k^+,\Tcal_k) - \phi(X_k,\Tcal_k))|
&\leq \Tcal_k(E_k+E_k^+) \ \ \text{and} \label{phi-diff-abs} \\
W_{k+1} - W_k &\leq \phi(X_{k+1},\Tcal_k) - \phi(X_k,\Tcal_k). \label{w-diff}
\end{align}
\esubequations
\end{lemma}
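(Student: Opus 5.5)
For \eqref{phi-diff-abs}, I would expand the definitions directly. By Line~\ref{line:phi} of Algorithm~\ref{alg:main},
$\Phi_k^+ - \Phi_k = \Tcal_k[F_k^+ - F_k + r(X_k^+) - r(X_k)] + \|c(X_k^+)\|_2 - \|c(X_k)\|_2$. By \eqref{eq:merit_function-repeat},
$\phi(X_k^+,\Tcal_k) - \phi(X_k,\Tcal_k) = \Tcal_k[f(X_k^+) - f(X_k) + r(X_k^+) - r(X_k)] + \|c(X_k^+)\|_2 - \|c(X_k)\|_2$.
Subtracting cancels the $r$ and $c$ terms and leaves $\Tcal_k[(F_k^+ - f(X_k^+)) - (F_k - f(X_k))]$. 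The triangle inequality, $\Tcal_k > 0$ (Lemma~\ref{lem:merit-bounded}), and \eqref{def:ek} then give the bound $\Tcal_k(E_k + E_k^+)$.

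For \eqref{w-diff}, the key fact is that the merit parameter sequence is monotonically nonincreasing, $\Tcal_{k+1} \leq \Tcal_k$, by \eqref{eq:tau-update}. I would write
$$W_{k+1} = \Tcal_{k+1}\big(f(X_{k+1}) - f_{\inf} + r(X_{k+1}) - r_{\inf}\big) + \|c(X_{k+1})\|_2 .$$
Since $X_{k+1}\in\Xcal$, the bracket is nonnegative by \eqref{eq:consequence-of-basic-assumption}. Therefore $\Tcal_{k+1}\leq\Tcal_k$ gives
$$W_{k+1} \leq \Tcal_k\big(f(X_{k+1}) - f_{\inf} + r(X_{k+1}) - r_{\inf}\big) + \|c(X_{k+1})\|_2 .$$
Subtracting $W_k = \Tcal_k(f(X_k) - f_{\inf} + r(X_k) - r_{\inf}) + \|c(X_k)\|_2$ cancels the constants $\Tcal_k(f_{\inf}+r_{\inf})$. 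What remains is exactly $\phi(X_{k+1},\Tcal_k) - \phi(X_k,\Tcal_k)$.

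No real obstacle is expected. The only care point is the monotonicity argument. Both branches of \eqref{eq:tau-update} yield $\Tcal_k \leq \Tcal_{k-1}$: the first sets $\Tcal_k=\Tcal_{k-1}$, and the second takes a minimum involving $(1-\epsilon_\tau)\Tcal_{k-1}$. Beyond that, one must invoke nonnegativity of the shifted objective gap, which holds on $\Xcal$ by \eqref{eq:consequence-of-basic-assumption}.
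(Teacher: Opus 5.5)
Your proposal is correct and follows essentially the same route as the paper: you expand the definitions, cancel the $r$ and $c$ terms and apply the triangle inequality for \eqref{phi-diff-abs}, and you use $\Tcal_{k+1}\leq\Tcal_k$ for \eqref{w-diff}. You also state explicitly that $f(X_{k+1})-f_{\inf}+r(X_{k+1})-r_{\inf}\geq 0$ on $\Xcal$, which the paper's chain of inequalities relies on without saying so.
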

\begin{proof}
  It follows from the definitions of $\Phi_k^+$ and $\Phi_k$ that
  \bequationNN
    \Phi_k^+ - \Phi_k = \phi(X_k^+,\Tcal_k) - \phi(X_k,\Tcal_k) - \Tcal_k(f(X_k^+) - F_k^+) + \Tcal_k(f(X_k) - F_k).
  \eequationNN
  After rearrangement and using the triangle inequality, one obtains~\eqref{phi-diff-abs}.

  The inequality in~\eqref{w-diff} follows using $T_{k+1} \leq \Tcal_k$ since
  \begin{align*}
    &W_{k+1} - W_k \\
    &= \Tcal_{k+1}(f(X_{k+1}) - f_{\inf} + r(X_{k+1}) - r_{\inf}) + \|c(X_{k+1})\|_2 \\
    &\phantom{bla} -(\Tcal_k(f(X_k) - f_{\inf} + r(X_k) - r_{\inf}) + \|c(X_k)\|_2) \\
    &\leq  \Tcal_k(f(X_{k+1}) - f_{\inf} + r(X_{k+1}) - r_{\inf}) + \|c(X_{k+1})\|_2 \\
    &\phantom{bla} - (\Tcal_k(f(X_k) - f_{\inf} + r(X_k) - r_{\inf}) + \|c(X_k)\|_2) \\
    &= \Tcal_k(f(X_{k+1}) + r(X_{k+1})) + \|c(X_{k+1})\|_2 - (\Tcal_k(f(X_k) + r(X_k))  + \|c(X_k)\|_2) \\
    &= \phi(X_{k+1},\Tcal_k) - \phi(X_k,\Tcal_k),
  \end{align*}
  which completes the proof of the lemma.
  \qed
\end{proof}

We now summarize key properties of the stochastic process.


\begin{lemma}
\label{lem:alg-properties}
  Suppose that $\epsilon$ satisfies~\eqref{eq:epsilon_lower_bound}.  Moreover, define $\omega:\R{2}_{\geq 0}\to \R{}_{\geq 0}$ and $h:\R{}_{>0} \to \R{}_{>0}$ for all $(a,b) \in \R{2}_{\geq 0}$ and $\alpha \in \R{}_{>0}$, respectively, by
\begin{align*}
\omega(a,b) 
&:= 2a + \tau_{-1}b \ \ \text{and} \\
h(\alpha) 
&:= \frac{\eta_1}{(\max\{\sqrt{2},\kappa_T\})^2} \min\left\{\frac{1}{2(1+\kappa)^2}, \frac{1 - 2\eta}{2}\right\} \tau_{\min}\alpha\epsilon^2.
\end{align*}
Then, the following hold.
\begin{itemize}
\item[(i)] One has that
\begin{equation}\label{bound-h-and-p}
h(\bar{\alpha}) > \frac{\omega(\epsilonbar_f, 2\epsilonbar_f)}{p-\frac{1}{2}}
\ \ \text{and} \ \ 
p \in \(\frac{1}{2}+\frac{\omega(\epsilonbar_f,2\epsilonbar_f)}{h(\bar{\alpha})}, 1\right].
\end{equation}
\item[(ii)] Conditioned on $k < T_\epsilon$, the following hold.
\begin{itemize}
\item[(a)] Conditioned on $I_k \Theta_k = 1$, one has
\begin{equation*}
  W_{k+1} \leq W_k - h(A_k) + \omega(\epsilonbar_f, E_k+E_k^+) \leq W_k - h(A_k) + \omega(\epsilonbar_f, 2\epsilonbar_f).
\end{equation*}
\item[(b)] Conditioned on $A_k \leq \bar{\alpha}$ and $I_k = 1$, one has $\Theta_k = 1$. 
\item[(c)] $W_{k+1} \leq W_k + \omega(\epsilonbar_f, E_k + E_k^+)$.  
\end{itemize}
\item[(iii)] For all $k\in\N{}$, it holds that $\P_{M_k}[I_k = 1 | \Fcal_k] \geq p$.
\end{itemize}
\end{lemma}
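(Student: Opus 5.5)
We prove the four claims separately, in the order (i), (iii), (ii)(c), (ii)(a), (ii)(b). The substantive work is in (ii)(a) and (ii)(b), which use Lemmas~\ref{lem:merit-reduction-new} and~\ref{lem:sk-difference}.

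\emph{Part (i).} This is arithmetic with~\eqref{eq:epsilon_lower_bound}. Writing $\epsilon' := \epsilon/\max\{\sqrt{2},\kappa_T\}$, the bound~\eqref{eq:epsilon_lower_bound} gives
\[
\epsilon'^2 > \max\Bigl\{(1+\kappa)^2,\tfrac{1}{1-2\eta}\Bigr\}\,\frac{(4+4\tau_{-1})\epsilonbar_f}{\theta\tau_{\min}\alphabar\eta_1(p-1/2)} .
\]
Multiplying by $\eta_1\min\{\tfrac{1}{2(1+\kappa)^2},\tfrac{1-2\eta}{2}\}\tau_{\min}\alphabar$ gives
\[
h(\alphabar) > \frac{(2+2\tau_{-1})\epsilonbar_f}{\theta(p-1/2)} \geq \frac{\omega(\epsilonbar_f,2\epsilonbar_f)}{p-1/2},
\]
using $\theta<1$ and $\omega(\epsilonbar_f,2\epsilonbar_f)=(2+2\tau_{-1})\epsilonbar_f$. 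Rearranging, with $p\le1$ from Assumption~\ref{ass:p}, yields the interval statement.

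\emph{Part (iii).} The event $\{I_k=1\}$ is the intersection of the gradient event~\eqref{eq:first_order_accuracy} and the event $\{E_k+E_k^+\le 2\epsilonbar_f\}$. By a union bound it suffices to show
\[
\P[E_k+E_k^+>2\epsilonbar_f\mid\Fcal_k] \le \exp\Bigl(-\min\Bigl\{\tfrac{u_f^2}{2\nu^2},\tfrac{u_f}{2b}\Bigr\}\Bigr).
\]
Since $E_k+E_k^+>2\epsilonbar_f$ forces $E_k>\epsilonbar_f$ or $E_k^+>\epsilonbar_f$, I would bound each by a Chernoff argument. Using~\eqref{eq:subexp-E} and $\E[E_k|\Fcal_k]\le\epsilon_f$ gives $\P[E_k-\E E_k\ge u_f]\le \exp(\lambda^2\nu^2/2-\lambda u_f)$ for $\lambda\in[0,1/b]$. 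Optimizing at $\lambda=\min\{u_f/\nu^2,1/b\}$ gives $\exp(-\min\{u_f^2/(2\nu^2),u_f/(2b)\})$. The same holds for $E_k^+$ via~\eqref{eq:subexp-E+}. Summing the two tails costs a factor of 2. To avoid it, I would instead apply a Chernoff bound to the sum $E_k+E_k^+$, using conditional independence of $\Xi_{k,f}$ and $(\Xi_{k,g},\Xi_{k,f}^+)$ to factor the moment generating function. That gives deviation $2u_f$ against variance proxy $2\nu^2$, which yields exactly the stated exponent. I expect matching the exact constant in $p$ to be the fiddly part.

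\emph{Part (ii)(c).} On an unsuccessful iteration $X_{k+1}=X_k$, so~\eqref{w-diff} gives $W_{k+1}\le W_k$. On a successful one, combine~\eqref{w-diff},~\eqref{phi-diff-abs} and the line-search test: since the predicted decrease is nonnegative (Lemma~\ref{lem:lin-infeas-reduction}),
\[
W_{k+1}-W_k \le \Phi_k^+-\Phi_k+\Tcal_k(E_k+E_k^+) \le 2\epsilonbar_f+\tau_{-1}(E_k+E_k^+) = \omega(\epsilonbar_f,E_k+E_k^+).
\]

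\emph{Part (ii)(a).} Repeat the chain in (ii)(c), but keep the decrease term $-\eta_1[\Tcal_k\|S_k\|^2/(2A_k)+\text{lin.\ reduction}]$. Since the iteration is true, one of the two gradient conditions holds. Lemma~\ref{lem:sk-difference}(i) or (ii) then lower-bounds this term by $\min\{(1+\kappa)^{-2},1-2\eta\}$ times $\Tcal_k\|S_k^{\rm true}\|^2/(2A_k)+\text{lin.\ reduction}$. That quantity is at least
\[
\tau_{\min}A_k\Bigl(\tfrac{\|S_k^{\rm true}\|^2}{2A_k^2}+\tfrac{\text{lin.\ reduction}}{A_k}\Bigr).
\]
Using $\|\min\{X_k,-Z_k^{\rm true}\}\|\le\|S_k^{\rm true}\|$ and $A_k\le1$, this is at least $\tfrac12\tau_{\min}A_k\Psi_k^2$, up to factors of 2 that I will reconcile with the $\tfrac12$'s in $h$. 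Finally $k<T_\epsilon$ gives $\Psi_k^2>\epsilon'^2$, so the whole term is at least $h(A_k)$.

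\emph{Part (ii)(b).} This is the main obstacle. On a true iteration, Lemma~\ref{lem:merit-reduction-new} with $A_k\le\alphabar\le\omega_1^{-1},\omega_2^{-1}$ should show
\[
\phi(X_k^+,\Tcal_k)-\phi(X_k,\Tcal_k) \le -\eta_1\Bigl[\Tcal_k\tfrac{\|S_k\|^2}{2A_k}+\text{lin.\ reduction}\Bigr].
\]
In case (i) the coefficient of $\Tcal_k\|S_k\|^2/A_k$ is $-\tfrac12+\tfrac{A_k(\Tcal_kL_g+L_J)}{2\Tcal_k}+(\tfrac{\kappa^2}{4\eta_1}+\eta_1)A_k$. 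I would check this is at most $-\eta_1/2$ using $\Tcal_k\ge\tau_{\min}$ and $A_k\omega_1\le1$. Case (ii) is analogous with $\omega_2$ and the $3\eta_1$ margin. Then~\eqref{phi-diff-abs} with $\Tcal_k(E_k+E_k^+)\le2\epsilonbar_f$ shows that the test in Line~\ref{line:sufficient-decreasing} holds. I expect verifying the constants in $\omega_1,\omega_2$ to be the delicate step.
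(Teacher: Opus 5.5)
Your arguments for (i), (ii)(a), (ii)(b) and (ii)(c) are the paper's. They use the same arithmetic with \eqref{eq:epsilon_lower_bound}, noting that $\min\{\tfrac{1}{2(1+\kappa)^2},\tfrac{1-2\eta}{2}\}=1/(2\max\{(1+\kappa)^2,\tfrac{1}{1-2\eta}\})$. They use the same chain $W_{k+1}-W_k\le\phi(X_k^+,\Tcal_k)-\phi(X_k,\Tcal_k)\le\Phi_k^+-\Phi_k+\Tcal_k(E_k+E_k^+)$. In (ii)(a) you apply Lemma~\ref{lem:sk-difference} and then $\tau_{\min}\le\Tcal_k\le1$, $\|\min\{X_k,-Z_k^{\mathrm{true}}\}\|_2\le\|S_k^{\mathrm{true}}\|_2$ and $A_k\le1$. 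Your factors of $2$ do reconcile, giving $\eta_1\chi\tau_{\min}A_k\Psi_k^2>h(A_k)$ with $\chi$ the minimum above. Finally, your coefficient checks against $\omega_1$ and $\omega_2$ match the paper's.

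The one real divergence is in (iii), where your step rests on a hypothesis the paper does not supply. You rightly see that bounding $E_k$ and $E_k^+$ separately leaves a factor $2$ in front of the exponential. You fix this by factoring the conditional moment generating function of $E_k+E_k^+$, which requires $E_k$ and $E_k^+$ to be conditionally independent given $\Fcal_k$. The paper never provides this. It only says the three components of $M_k$ are mutually independent, not that $M_k$ is independent of $\Fcal_k$, and it states its oracle assumptions separately for $E_k$ and $E_k^+$.

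The paper avoids the issue with Cauchy--Schwarz. Write $\bar E_k:=E_k-\E[E_k\mid\Fcal_k]$ and $\bar E_k^+:=E_k^+-\E[E_k^+\mid\Fcal_k]$. Then $\E[e^{\lambda(\bar E_k+\bar E_k^+)}\mid\Fcal_k]\le(\E[e^{2\lambda\bar E_k}\mid\Fcal_k]\,\E[e^{2\lambda\bar E_k^+}\mid\Fcal_k])^{1/2}\le e^{\lambda^2(2\nu)^2/2}$ for $\lambda\in[0,\tfrac{1}{2b}]$. So the sum is one-sided subexponential with parameters $(2\nu,2b)$ with no independence at all. The one-sided Bernstein bound at deviation $\bar t\ge2u_f$ then gives exactly $\exp(-\min\{u_f^2/(2\nu^2),u_f/(2b)\})$.

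Under your independence hypothesis the parameters would be $(\sqrt2\nu,b)$, and the exponent would be $\min\{u_f^2/\nu^2,u_f/b\}$. That is twice the stated exponent, not ``exactly'' it. This is harmless, since it is a stronger bound, but the Cauchy--Schwarz route is what makes (iii) hold under the assumptions as written.
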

\begin{proof}
  Consider part (i).  It follows along with \eqref{eq:epsilon_lower_bound} and $\theta\in(0,1)$ that
\begin{equation*}
\begin{aligned}
h(\bar\alpha) 
= & \ \frac{\eta_1}{(\max\{\sqrt{2},\kappa_T\})^2} \min\left\{\frac{1}{2(1+\kappa)^2}, \frac{1 - 2\eta}{2}\right\} \tau_{\min}{\bar\alpha}\epsilon^2 \\ 
> &\ \frac{\eta_1}{2(\max\{\sqrt{2},\kappa_T\})^2} \min\left\{\frac{1}{(1+\kappa)^2}, 1 - 2\eta\right\} \tau_{\min}\alphabar \\
&\ \cdot (\max\{\sqrt{2},\kappa_T\})^2 \max\left\{(1+\kappa)^2, \frac{1}{1 - 2\eta} \right\}  \frac{(4 + 4\tau_{-1})\epsilonbar_f}{\theta\tau_{\min}\alphabar\eta_1(p - 1/2)} \\
= & \ \frac{1}{2\theta} \frac{(4 + 4\tau_{-1})\epsilonbar_f}{p - 1/2}
> \frac{(2 + 2\tau_{-1})\epsilonbar_f}{p - 1/2}
= \frac{\omega(\epsilonbar_f, 2\epsilonbar_f)}{p-1/2},
\end{aligned}
\end{equation*}
as claimed.  After rearrangement, this inequality along with the bounds in \eqref{def:p} together imply that $p$ is on the stated interval in part~(i).

Now consider part (ii) and condition on the event that $k < T_\epsilon$.  Let us begin by proving part (ii)(a), for which we condition on the event that $I_k\Theta_k = 1$. Thus, iteration $k$ is true, so the inequalities in Definition~\ref{def:true} hold. We consider two cases.  
\begin{itemize}
  \item Suppose that $\|G_k - \nabla f(X_k)\|_2 \leq \kappa \sqrt{\|S_k\|_2^2 + \|c(X_k)\|_2 - \|c(X_k) + \nabla c(X_k)^T V_k\|_2}$. Thus, by Lemma~\ref{lem:sk-difference}, the conclusion of part (i) of that lemma holds. Combining this conclusion with the fact that iteration $k$ is also successful and $\Tcal_k \leq 1$, 
  \begin{align*}
    &\ \Phi_k^+ - \Phi_k - 2\epsilonbar_f \\
    \leq & \ -\eta_1 \(\Tcal_k\frac{\|S_k\|_2^2}{2A_k} + (\|c(X_k)\|_2 - \|c(X_k) + \nabla c(X_k)^TV_k\|_2)\) \\
    \leq & \ -\frac{\eta_1}{(1+\kappa)^2} \(\Tcal_k\frac{\|S_k^{\mathrm{true}}\|_2^2}{2A_k} + (\|c(X_k)\|_2 - \|c(X_k) + \nabla c(X_k)^TV_k\|_2)\) \\
    \leq & \ -\frac{\eta_1}{(1+\kappa)^2} \Tcal_k\(\frac{\|S_k^{\mathrm{true}}\|_2^2}{2A_k} + (\|c(X_k)\|_2 - \|c(X_k) + \nabla c(X_k)^TV_k\|_2)\) \\
    = & \ -\frac{\eta_1}{(1+\kappa)^2} \Tcal_k A_k\(\frac{\|S_k^{\mathrm{true}}\|_2^2}{2A_k^2} + \frac{\|c(X_k)\|_2 - \|c(X_k) + \nabla c(X_k)^TV_k\|_2}{A_k}\) \\
    \leq & \ -\frac{\eta_1}{2(1+\kappa)^2} \Tcal_k A_k \(\frac{\|S_k^{\mathrm{true}}\|_2^2}{A_k^2} + \frac{\|c(X_k)\|_2 - \|c(X_k) + \nabla c(X_k)^TV_k\|_2}{A_k}\).
  \end{align*}
  \item Suppose that $\|G_k - \nabla f(X_k)\|_2 \leq \epsilon_g$. Thus, by Lemma~\ref{lem:sk-difference}, the conclusion of part (ii) of that lemma holds. Combining this conclusion with the fact that iteration $k$ is also successful and $\Tcal_k \leq 1$,
  \begin{align*}
    &\ \Phi_k^+ - \Phi_k - 2\epsilonbar_f \\ 
    \leq & \ -\eta_1 \(\Tcal_k\frac{\|S_k\|_2^2}{2A_k} + (\|c(X_k)\|_2 - \|c(X_k) + \nabla c(X_k)^TV_k\|_2)\) \\
    \leq & \ - \eta_1(1 - 2\eta)\(\Tcal_k \frac{\|S_k^{\mathrm{true}}\|_2^2}{2A_k} + (\|c(X_k)\|_2 - \|c(X_k) + \nabla c(X_k)^TV_k\|_2)\) \\
    \leq & \ - \eta_1(1 - 2\eta)\Tcal_k \(\frac{\|S_k^{\mathrm{true}}\|_2^2}{2A_k} + (\|c(X_k)\|_2 - \|c(X_k) + \nabla c(X_k)^TV_k\|_2)\) \\
    = & \ -\eta_1(1 - 2\eta)\Tcal_k A_k \(\frac{\|S_k^{\mathrm{true}}\|_2^2}{2A_k^2} + \frac{\|c(X_k)\|_2 - \|c(X_k) + \nabla c(X_k)^TV_k\|_2}{A_k}\) \\
    \leq & \ -\eta_1\frac{1 - 2\eta}{2} \Tcal_k A_k \(\frac{\|S_k^{\mathrm{true}}\|_2^2}{A_k^2} + \frac{\|c(X_k)\|_2 - \|c(X_k) + \nabla c(X_k)^TV_k\|_2}{A_k}\).
  \end{align*}
  \end{itemize}
  Combining these inequalities, Lemma~\ref{lem:merit-bounded}, $A_k \leq 1$, and $\|S_k^{\mathrm{true}}\|_2 \geq \|\min\{X_k, -Z_k^{\mathrm{true}}\}\|_2$ (see \cite[Lemma 4.6]{curtis2025proximal}) one finds with $\chi := \min\left\{\frac{1}{2(1+\kappa)^2}, \frac{1 - 2\eta}{2}\right\}$ that
  \begin{align*}
    &\ \Phi_k^+ - \Phi_k - 2\epsilonbar_f \\ 
    \leq&\ -\eta_1 \chi \Tcal_k A_k \(\frac{\|S_k^{\mathrm{true}}\|_2^2}{A_k^2} + \frac{\|c(X_k)\|_2 - \|c(X_k) + \nabla c(X_k)^TV_k\|_2}{A_k}\) \\
    \leq&\ -\eta_1 \chi \Tcal_k A_k \bigg(\frac{\|S_k^{\mathrm{true}}\|_2^2}{2A_k^2} + \frac{\|\min\{X_k, -Z_k^{\mathrm{true}}\}\|_2^2}{2A_k^2} \\
    &\qquad\qquad\qquad\ + \frac{\|c(X_k)\|_2 - \|c(X_k) + \nabla c(X_k)^TV_k\|_2}{A_k}\bigg) \\
    \leq&\ -\eta_1 \chi \tau_{\min}A_k \bigg(\frac{\|S_k^{\mathrm{true}}\|_2^2}{2A_k^2} + \frac{\|\min\{X_k, -Z_k^{\mathrm{true}}\}\|_2^2}{2} \\
    &\qquad\qquad\qquad\ + \frac{\|c(X_k)\|_2 - \|c(X_k) + \nabla c(X_k)^TV_k\|_2}{A_k}\bigg) \\
    \leq&\ -\frac{\eta_1}{(\max\{\sqrt{2},\kappa_T\})^2} \chi \tau_{\min}A_k\epsilon^2 \\
    =&\ -h(A_k).
  \end{align*}
  Hence, it follows with \eqref{phi-diff-abs}, nonnegativity of $E_k$ and $E_k^+$, and $\Tcal_k \leq \tau_{-1}$ that
  \begin{align*}
    \phi(X_k^+,\Tcal_k) - \phi(X_k,\Tcal_k) &\leq \Phi_k^+ - \Phi_k + \Tcal_k(E_k + E_k^+) \\
    &\leq -h(A_k) + 2 \epsilonbar_f + \tau_{-1}(E_k + E_k^+) \\
    &= -h(A_k) + \omega(\epsilonbar_f, E_k + E_k^+).
  \end{align*}
  Combining this inequality, \eqref{w-diff}, $\Theta_k = 1$, the fact that $\omega(\cdot,\cdot)$ is an increasing function in its second argument, and $E_k + E_k^+ \leq 2 \epsilonbar_f$ give
  \begin{align*}
    W_{k+1} - W_k 
    &\leq \phi(X_{k+1},\Tcal_k) - \phi(X_k,\Tcal_k) = \phi(X_k^+,\Tcal_k) - \phi(X_k,\Tcal_k) \\
    &\leq -h(A_k) + \omega(\epsilonbar_f,E_k+E_k^+) \leq -h(A_k) + \omega(\epsilonbar_f,2\epsilonbar_f),
  \end{align*}
  which completes the proof of part~(ii)(a).


  Now consider part (ii)(b), for which we condition on the event that $A_k \leq \bar\alpha$ and $I_k = 1$. Thus, the inequalities in Definition~\ref{def:true} hold. Let us consider two cases.
  \begin{itemize}
    \item Suppose that $\|G_k - \nabla f(X_k)\|_2 \leq \kappa\sqrt{\|S_k\|_2^2 + \|c(X_k)\|_2 - \|c(X_k) + \nabla c(X_k)^TV_k\|_2}$.  Let us first observe from $\tau_{\min} \leq \Tcal_k$ and $A_k \leq \alphabar \leq \omega_1^{-1}$ that 
    \begin{align*}
      & \ 1 - \(\frac{\Tcal_kL_g + L_J}{\Tcal_k}\) A_k - \(\frac{\kappa^2}{2\eta_1}+ 2\eta_1\) A_k \\
      \geq &\ 1 - \(\frac{\tau_{\min}L_g + L_J}{\tau_{\min}} + \frac{\kappa^2}{2\eta_1}+ 2\eta_1\) A_k \\
      \geq &\ 1 - (1-\eta_1)\(\frac{\tau_{\min}(L_g+2\eta_1) + L_J}{\tau_{\min}} + \frac{\kappa^2}{2\eta_1}\)\(\frac{\tau_{\min}(L_g + 2\eta_1) + L_J}{\tau_{\min}} + \frac{\kappa^2}{2\eta_1}\)^{-1} \\
      =&\  \eta_1. 
    \end{align*}
    Combining this inequality with Lemma~\ref{lem:merit-reduction-new}(i) gives
    \begin{align*}
      &\ \phi(X_k + S_k,\Tcal_k) - \phi(X_k,\Tcal_k) \\
      \leq& \ -\frac{\Tcal_k}{2 A_k}\|S_k\|_2^2 + \frac{\Tcal_k L_g + L_J}{2} \|S_k\|_2^2 + \(\frac{\kappa^2}{4\eta_1}+ \eta_1\) A_k \Tcal_k \frac{\|S_k\|_2^2}{A_k} \\
      &\ - \eta_1 (\|c(X_k)\|_2 - \|c(X_k) + \nabla c(X_k)^TV_k\|_2) \\
      = & \ - \(1 - \(\frac{\Tcal_k L_g + L_J}{\Tcal_k}\) A_k - \(\frac{\kappa^2}{2\eta_1} + 2\eta_1\) A_k\) \frac{\Tcal_k}{2A_k} \|S_k\|_2^2 \\
      &\ - \eta_1 (\|c(X_k)\|_2 - \|c(X_k) + \nabla c(X_k)^TV_k\|_2) \\
      \leq & \ - \eta_1\frac{\Tcal_k}{2A_k} \|S_k\|_2^2 - \eta_1 (\|c(X_k)\|_2 - \|c(X_k) + \nabla c(X_k)^TV_k\|_2) \\
      = & \ -\eta_1 \(\frac{\Tcal_k}{2A_k} \|S_k\|_2^2 +(\|c(X_k)\|_2 - \|c(X_k) + \nabla c(X_k)^TV_k\|_2)\). 
    \end{align*}
    \item Suppose that $\|G_k - \nabla f(X_k)\|_2 \leq \epsilon_g$. From $\tau_{\min} \leq \Tcal_k$ and $A_k \leq \alphabar \leq \omega_2^{-1}$,
    \begin{align*}
      &\ 1 - \(\frac{\eta^2}{2\eta_1(1-2\eta)^2} + 2\eta_1\) - \frac{(\Tcal_k L_g + L_J) A_k}{\Tcal_k} \\
      \geq&\ 1 - \(\frac{\eta^2}{2\eta_1(1-2\eta)^2} + 2\eta_1\) - \frac{(\tau_{\min} L_g + L_J)\omega_2^{-1}}{\tau_{\min}} \\
      =&\ 1 - \frac{\eta^2}{2\eta_1(1-2\eta)^2} - 2\eta_1 - \(1 - \frac{\eta^2}{2\eta_1(1-2\eta)^2} - 3\eta_1\) \\
      =&\ \eta_1.
    \end{align*}
    Combining this inequality with Lemma~\ref{lem:merit-reduction-new}(ii) gives
    \begin{align*}
      &\ \phi(X_k + S_k,\Tcal_k) - \phi(X_k,\Tcal_k) \\
      \leq & \ -\frac{\Tcal_k}{2A_k} \|S_k\|_2^2 + \frac{\Tcal_k L_g + L_J}{2} \|S_k\|_2^2 + \(\frac{\eta^2}{4\eta_1(1-2\eta)^2} + \eta_1\) \Tcal_k\frac{\|S_k\|_2^2}{A_k} \\
      &\ - \eta_1 (\|c(X_k)\|_2 - \|c(X_k) + \nabla c(X_k)^TV_k\|_2) \\
      = & \ - \(1 - \(\frac{\eta^2}{2\eta_1(1-2\eta)^2} + 2\eta_1\) - \(\frac{\Tcal_kL_g + L_J}{\Tcal_k}\) A_k \) \frac{\Tcal_k}{2A_k} \|S_k\|_2^2 \\
      &\ - \eta_1(\|c(X_k)\|_2 - \|c(X_k) + \nabla c(X_k)^TV_k\|_2) \\
      \leq & \ -\eta_1 \frac{\Tcal_k}{2A_k} \|S_k\|_2^2 - \eta_1 (\|c(X_k)\|_2 - \|c(X_k) + \nabla c(X_k)^TV_k\|_2) \\
      = & \ -\eta_1 \(\frac{\Tcal_k}{2A_k} \|S_k\|_2^2 +(\|c(X_k)\|_2 - \|c(X_k) + \nabla c(X_k)^TV_k\|_2)\). 
    \end{align*}
  \end{itemize}
  It follows from the previous two cases, \eqref{phi-diff-abs}, $E_k + E_k^+ \leq 2 \epsilonbar_f$, and $\Tcal_k \leq 1$ that
  \begin{align*}
    \Phi_k^+ - \Phi_k
    &\leq \phi(X_k^+,\Tcal_k) - \phi(X_k,\Tcal_k) + \Tcal_k(E_k + E_k^+) \leq \phi(X_k^+,\Tcal_k) - \phi(X_k,\Tcal_k) + 2 \epsilonbar_f \\
    &\leq -\eta_1 \(\frac{\Tcal_k}{2A_k} \|S_k\|_2^2 +(\|c(X_k)\|_2 - \|c(X_k) + \nabla c(X_k)^TV_k\|_2)\)  + 2\epsilonbar_f,
  \end{align*}
which implies that $\Theta_k = 1$, thus completing the proof of part (ii)(b).

  Now consider part (ii)(c).  Note that $W_{k+1} \leq W_k$ if the $k$th iteration is not successful since $\Tcal_{k+1} \leq \Tcal_k$ and $X_{k+1} = X_k$, in which case the inequality in part (ii)(c) holds trivially. Thus, for the remainder of the proof of this part, let us condition on the event that $\Theta_k = 1$.  In this case, one finds that
  \begin{equation*}
    \Phi_k^+ - \Phi_k \leq -\eta_1 \(\frac{\Tcal_k}{2A_k} \|S_k\|_2^2 + (\|c(X_k)\|_2 - \|c(X_k) + \nabla c(X_k)^TV_k\|_2)\) + 2\epsilonbar_f \leq 2\epsilonbar_f.
  \end{equation*}
  It follows from this inequality, $\Theta_k = 1$, \eqref{w-diff}, \eqref{phi-diff-abs}, and $\Tcal_k \leq \tau_{-1}$ that 
  \begin{align*}
    W_{k+1} - W_k &\leq \phi(X_k^+,\Tcal_k) - \phi(X_k,\Tcal_k) \leq \Phi_k^+ - \Phi_k + \Tcal_k(E_k+E_k^+) \\
    &\leq 2\epsilonbar_f + \tau_{-1}(E_k+E_k^+) = \omega(\epsilonbar_f,E_k+E_k^+),
  \end{align*}
  which completes the proof of part (ii)(c).


  Finally, consider part (iii). Letting $1\{A\}$ denote the indicator function for the event $A$, let us define for convenience the random variables 
  \begin{align*}
    I_{k,1} &:= 1\bigg\{\|G_k - \nabla f(X_k)\| \\
    &\qquad\quad\ \leq \max\left\{\epsilon_g, \kappa \sqrt{\|S_k\|_2^2 + \|c(X_k)\|_2 - \|c(X_k) + \nabla c(X_k)^TV_k\|_2} \right\} \bigg\} \\
    \text{and}\ \ 
    I_{k,2} &:= 1\left\{ E_k + E_k^+ \leq 2\epsilonbar_f \right\}.
  \end{align*}
  It follows from these definitions and Assumption~\ref{ass:stochastic_first_order_oracle} that
  \begin{align}
    \P_{M_k}[I_k=0 | \Fcal_k] &= \P_{M_k}[I_{k,1}=0 \text{ or } I_{k,2} = 0 | \Fcal_k] \nonumber \\
    &\leq \P_{M_k}[I_{k,1} = 0  |  \Fcal_k] + \P_{M_k}[I_{k,2} = 0  |  \Fcal_k] \nonumber \\
    &\leq \delta + \P_{M_k}[I_{k,2} = 0 | \Fcal_k]. \label{prob-to-bound}
  \end{align}
  We now bound the probability on the right-hand side of~\eqref{prob-to-bound}.  It follows from~\eqref{eq:subexp-E}, \eqref{eq:subexp-E+}, $M_k = (\Xi_{k,g},\Xi_{k,f},\Xi^+_{k,f     })$, linearity of the expectation operator, and the Cauchy-Schwarz inequality for expectations that
  \begin{align*}
    \E_{M_k}[E_k+E_k^+ | \Fcal_k] \leq&\ 2\epsilon_f
    \quad \text{and} \\
    \E_{M_k}\!\left[
      \exp\!\left\{
        \lambda\bigl(E_k+E_k^+ - \E_{M_k}[E_k+E_k^+|\Fcal_k]\bigr)
      \right\} | \Fcal_k
    \right]
    \leq&\ \exp\(\frac{\lambda^2 (2\nu)^2}{2}\) \\
    &\ \text{for all} \ \ \lambda \in \left[0, \frac{1}{2b}\right].
  \end{align*}
  Using a one-sided Bernstein inequality, it follows for all $t \geq 0$ that
  \begin{equation}\label{eq:one-sided-bernstein}
    \P_{M_k}[E_k + E^+_k - \E_{M_k}[E_k + E_k^+ | \Fcal_k] \geq t  |  \Fcal_k] \leq \exp\(-\min\left\{\frac{t^2}{2(2\nu)^2}, \frac{t}{2 \cdot 2b}\right\}\).
  \end{equation}
  Note that
  \begin{align}
    \P_{M_k}[I_{k,2} = 0 | \Fcal_k]
    &= \P_{M_k} \bigg[E_k + E_k^+ - \E_{M_k}[E_k + E_k^+ | \Fcal_k] \nonumber \\
    &\qquad\qquad > 2\epsilonbar_f - \E_{M_k}[E_k + E_k^+ | \Fcal_k] \Big| \Fcal_k \bigg]. \label{Prob-for-Ik2}
  \end{align}
  Next, let us define $\tbar := 2\epsilonbar_f - \E_{M_k}[E_k+E_k^+\, | \, \Fcal_k]$, and note from linearity of the expectation operator, \eqref{eq:subexp-E}, \eqref{eq:subexp-E+}, and Assumption~\ref{ass:p} that $\tbar \geq  2u_f \geq 0$. Combining this inequality with~\eqref{Prob-for-Ik2} and~\eqref{eq:one-sided-bernstein}, one obtains
  \begin{align*}
    \P_{M_k}[I_{k,2} = 0 | \Fcal_k]
    &= \P_{M_k} [E_k + E_k^+ - \E_{M_k}[E_k + E_k^+ | \Fcal_k] ] > \tbar | \Fcal_k ] \\
    &\leq \exp\(-\min\left\{ \frac{\tbar^2}{2(2\nu)^2}, \frac{\tbar}{2\cdot 2b}
\right\}\) \\
&\leq \exp\(-\min\left\{
  \frac{4u_f^2}{2(2\nu)^2},
  \frac{2u_f}{2\cdot 2b}
\right\}\) 
= \exp\(-\min\left\{
  \frac{u_f^2}{2\nu^2},
  \frac{u_f}{2b}
\right\}\). 
%
  \end{align*}
  Combining this inequality with~\eqref{prob-to-bound} gives 
  \bequationNN
    \P_{M_k}[I_k = 1 | \Fcal_k] = 1 - \P_{M_k}[I_k = 0 | \Fcal_k] \geq 1 - \delta - \exp\(-\min\left\{\frac{u_f^2}{2\nu^2}, \frac{u_f}{2b}\right\}\) = p,
  \eequationNN
  which completes the proof.
  \qed
\end{proof}





Our next result gives a concentration bound related to the sum of $\{I_k\}$.

\begin{lemma}\label{lem:true-total-prob}
  Let $\hat{p} \in [0, p)$.  Then, it holds that
  \bequationNN
    \P\left[\sum_{k=0}^{t-1} I_k \leq \hat{p} t \right] \leq \exp\(-\frac{(p - \hat{p})^2}{2p^2}t\) \ \ \text{for all $t \in \N{}$ with $t \geq 1$.}
  \eequationNN
\end{lemma}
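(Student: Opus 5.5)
The plan is to treat $\{I_k\}$ as a sequence of Bernoulli variables adapted to the filtration $\{\Fcal_{k+1}\}$ whose conditional success probability is at least $p$. By Lemma~\ref{lem:alg-properties}(iii), $\P_{M_k}[I_k=1\mid\Fcal_k]\geq p$ for every $k$, and $I_k$ is $\Fcal_{k+1}$-measurable. From this we get a multiplicative Chernoff bound by the standard exponential-supermartingale argument. Independence of the $I_k$ is not available, and we will not need it.

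Fix $\lambda>0$. First, we show that for every $k$,
\begin{equation*}
\E\bigl[e^{-\lambda I_k}\mid\Fcal_k\bigr] = 1-(1-e^{-\lambda})\P[I_k=1\mid\Fcal_k] \leq 1-p(1-e^{-\lambda}) \leq \exp\bigl(-p(1-e^{-\lambda})\bigr).
\end{equation*}
Next, we apply the tower property iteratively. Conditioning on $\Fcal_{t-1}$, then on $\Fcal_{t-2}$, and so on, and using that $I_0,\dots,I_{k-1}$ are $\Fcal_k$-measurable, gives
\begin{equation*}
\E\Bigl[e^{-\lambda\sum_{k=0}^{t-1}I_k}\Bigr]\leq \exp\bigl(-tp(1-e^{-\lambda})\bigr).
\end{equation*}
Markov's inequality applied to $e^{-\lambda\sum_k I_k}$ then yields
\begin{equation*}
\P\Bigl[\sum_{k=0}^{t-1} I_k\leq \hat p t\Bigr]\leq \exp\bigl(t(\lambda\hat p - p(1-e^{-\lambda}))\bigr).
\end{equation*}

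The last step is to optimize over $\lambda$ and obtain the specific exponent $(p-\hat p)^2/(2p^2)$. If $\hat p=0$, take $\lambda\to\infty$. The bound becomes $e^{-tp}$, and since $p\leq1$ we have $p\geq p/2 = p^2/(2p^2)\cdot p$, which suffices because $p^2/(2p^2)=1/2\leq p$ for $p>1/2$. For $\hat p\in(0,p)$, choose $\lambda=\ln(p/\hat p)$. This gives the Chernoff exponent $-t\bigl(p-\hat p-\hat p\ln(p/\hat p)\bigr)$. Writing $\hat p=(1-\delta')p$ with $\delta'=(p-\hat p)/p\in(0,1)$, this exponent equals $-tp\bigl(\delta'+(1-\delta')\ln(1-\delta')\bigr)$. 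The elementary inequality $\delta'+(1-\delta')\ln(1-\delta')\geq \delta'^2/2$ then bounds it by $-tp\,\delta'^2/2 = -t(p-\hat p)^2/(2p)$. Finally, since $p\leq 1$, we have $(p-\hat p)^2/(2p)\geq (p-\hat p)^2/(2p^2)\cdot p$. This is slightly weaker than the target, so instead we use the cruder form directly: $(p-\hat p)^2/(2p)\geq (p-\hat p)^2/(2p^2)$ holds precisely when $p\geq 1$, which fails in general. I would therefore redo the bound with $\lambda$ chosen as $\lambda=(p-\hat p)/p^2$ in place of the Chernoff optimizer. Using $1-e^{-\lambda}\geq\lambda-\lambda^2/2$, the exponent becomes $-t\bigl(\lambda(p-\hat p)-p\lambda^2/2\bigr)$. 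With $\lambda=(p-\hat p)/p$ this equals $-t(p-\hat p)^2/(2p)$, and that is at most $-t(p-\hat p)^2/(2p^2)\cdot p$. Because the target exponent $(p-\hat p)^2/(2p^2)$ is larger than what we obtain when $p<1$, I expect the intended argument to be Azuma--Hoeffding-type or to carry an extra factor. The main obstacle is therefore matching the exact constant. I would first try Hoeffding/Azuma on the martingale differences $I_k-\P[I_k=1\mid\Fcal_k]$ to get $\exp(-2(p-\hat p)^2t)$, which dominates $\exp(-(p-\hat p)^2t/(2p^2))$ since $2\geq 1/(2p^2)$ for $p>1/2$. This route delivers the stated bound cleanly, and it is the version I would write up.

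Concretely, let $D_k:=I_k-\P[I_k=1\mid\Fcal_k]$, which satisfies $|D_k|\leq 1$ and lies in an interval of length one, and note that $\sum_k I_k\leq\hat pt$ implies $\sum_k D_k\leq -(p-\hat p)t$. The Azuma--Hoeffding inequality gives probability at most $\exp(-2(p-\hat p)^2 t)\leq \exp\bigl(-(p-\hat p)^2t/(2p^2)\bigr)$, where the last inequality uses $p>1/2$ from Assumption~\ref{ass:p}.
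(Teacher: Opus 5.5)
Your final write-up is correct and is essentially the paper's argument. You apply Azuma--Hoeffding to $D_k=I_k-\P[I_k=1\mid\Fcal_k]$ and use Lemma~\ref{lem:alg-properties}(iii) to get $\sum_k D_k\le-(p-\hat p)t$ on the event; the paper instead applies Azuma--Hoeffding to the supermartingale $pt-\sum_{k<t}I_k$, whose increments are bounded in absolute value by $\max\{p,1-p\}=p$, which gives the stated constant directly.

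Your version yields the sharper $\exp(-2(p-\hat p)^2t)$ and uses $p>1/2$ only in the final comparison. It does, however, rest on the interval-length form of Hoeffding's lemma. That form is valid here because the endpoints $-\P[I_k=1\mid\Fcal_k]$ and $1-\P[I_k=1\mid\Fcal_k]$ are $\Fcal_k$-measurable; the $|D_k|\le 1$ form alone would give only $\exp(-(p-\hat p)^2t/2)$, which is too weak. You should state this explicitly, and drop the abandoned Chernoff detour.
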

\begin{proof}
  Consider arbitrary $\hat{p} \in [0, p)$.  For all $t \in \N{}$ with $t \geq 1$ define the $\Fcal_t$-measurable random variable $\Ybar_t := pt - \sum_{k=0}^{t-1}I_k$ and $\Ybar_0 := 0$.  It follows that
  \begin{equation}\label{Y-recursion}
    \Ybar_{t+1} =  p(t+1) - \sum_{k=0}^{t}I_k =  pt + p - I_t - \sum_{k=0}^{t-1}I_k = \Ybar_t + p - I_t.
  \end{equation}
  Combining~\eqref{Y-recursion}, linearity of expectation, and Lemma~\ref{lem:alg-properties}(iii) gives
  \begin{align*}
    \E_{M_t}[\Ybar_{t+1} |  \Fcal_t]
    &= \E_{M_t}[\Ybar_t + p - I_t | \Fcal_t] = \Ybar_t + p - \E_{M_t}[I_t | \Fcal_t] \\ 
    &= \Ybar_t + p - \P_{M_t}[I_t=1 | \Fcal_t] \leq \Ybar_t + p - p
    = \Ybar_t,
  \end{align*}
  from which it follows that $\{\Ybar_t\}$ is a supermartingale. Next, it follows from \eqref{Y-recursion}, the fact that $I_k \in \{0,1\}$ for all $k\in\N{}$, and $p > 1/2$ (see~\eqref{def:p}) that
  \begin{align*}
    |\Ybar_t - \Ybar_{t-1}| = |p - I_{t-1}| \leq \max\{p, 1-p\} = p.
  \end{align*}
  Thus, each increment of the supermartingale $\{\Ybar_t\}$ is bounded in absolute value by $p$. Hence, the Azuma--Hoeffding inequality~\cite{azuma1967weighted} gives
  \begin{align*}
    \P\left[\sum_{k=0}^{t-1} I_k \leq \hat{p} t \right] &= \P\left[pt - \sum_{k=0}^{t-1} I_k \geq (p-\hat{p})t \right] 
    = \P[\Ybar_t \geq (p-\phat)t] \\
    &\leq \exp \(-\frac{(p-\hat{p})^2t^2}{2tp^2}\) = \exp \(-\frac{(p-\hat{p})^2}{2p^2}t\),
  \end{align*}
  which completes the proof.
  \qed
\end{proof}

Next, we show that the number of successful iterations with a ``large'' proximal parameter (Definition~\ref{def:largestep}) is at least as large as the number of unsuccessful iterations with a ``large'' proximal parameter minus a constant. The result and its proof are similar to~\cite[Lemma~3.2]{jin2024high} with a small modification to handle the upper bound enforced on the proximal parameter in Algorithm~\ref{alg:main}, which was not used in~\cite{jin2024high}.

\begin{lemma}\label{lem:large}
With $\ibar$ from Definition~\ref{def:stoppingtime}, it holds for all $t\in\N{}\setminus\{0\}$ that
$$
\sum_{k=0}^{t-1} L_k \Theta_k 
\geq \sum_{k=0}^{t-1} 1\{A_k< \alpha_0\}L_k \Theta_k
\geq \sum_{k=0}^{t-1} L_k (1 - \Theta_k) - (\ibar+1).
$$
\end{lemma}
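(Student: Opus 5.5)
The first inequality is immediate, since every summand satisfies $1\{A_k<\alpha_0\}L_k\Theta_k \le L_k\Theta_k$. For the second, I will use a counting argument on the exponent of the proximal parameter. Write $A_k = \gamma^{j_k}\alpha_0$ with $j_k\in\N{}$; this is possible because $A_0=\alpha_0$ and each update multiplies by $\gamma$ or $\gamma^{-1}$ with a cap at $\alpha_0$, and the cap can only bring the value back to $\alpha_0$ (exponent $0$). Since $\alphabar = \gamma^{\ibar+1}\alpha_0$, the event $L_k=1$ (i.e., $\min\{A_k,A_{k+1}\}\ge\alphabar$) means that both $j_k$ and $j_{k+1}$ lie in $\{0,1,\dots,\ibar+1\}$. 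Conversely, by Lemma~\ref{lem:A-wd} and Definition~\ref{def:largestep}, when $L_k=0$ both $A_k,A_{k+1}\le\alphabar$, so $j_k,j_{k+1}\ge \ibar+1$.

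I will use the potential $\psi_k := \min\{j_k,\ibar+1\}\in\{0,\dots,\ibar+1\}$ and track its increments. There are four cases.
\begin{itemize}
\item If $L_k=0$, both exponents are $\ge \ibar+1$, so $\psi_{k+1}-\psi_k=0$.
\item If $L_k=1$ and $\Theta_k=0$, then $j_{k+1}=j_k+1$ and both lie in $[0,\ibar+1]$, so $\psi_{k+1}-\psi_k=+1$.
\item If $L_k=1$, $\Theta_k=1$ and $A_k<\alpha_0$, then $j_k\ge1$ and $j_{k+1}=j_k-1$ (the cap is inactive since $\gamma^{-1}A_k\le\alpha_0$), so $\psi_{k+1}-\psi_k=-1$.
\item If $L_k=1$, $\Theta_k=1$ and $A_k=\alpha_0$, the cap gives $A_{k+1}=\alpha_0$, so $\psi_{k+1}-\psi_k=0$.
\end{itemize}
Hence in every case
$$\psi_{k+1}-\psi_k = L_k(1-\Theta_k) - 1\{A_k<\alpha_0\}L_k\Theta_k.$$

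Summing from $k=0$ to $t-1$ telescopes:
$$\sum_{k=0}^{t-1}L_k(1-\Theta_k)-\sum_{k=0}^{t-1}1\{A_k<\alpha_0\}L_k\Theta_k=\psi_t-\psi_0\le \ibar+1,$$
using $\psi_t\le\ibar+1$ and $\psi_0=0$. Rearranging gives the second inequality.

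The main care point is the case analysis for the increments: one must verify that when $L_k=1$ neither exponent exceeds $\ibar+1$, and that when $L_k=0$ both exponents are at least $\ibar+1$. The second fact is exactly what Lemma~\ref{lem:A-wd} provides, and it is what makes the truncated potential unchanged on such iterations. The capped successful step at $A_k=\alpha_0$ is the only modification relative to \cite[Lemma~3.2]{jin2024high}, and it is handled by the indicator $1\{A_k<\alpha_0\}$.
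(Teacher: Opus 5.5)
Your proof is correct and uses the same counting idea as the paper's proof: unsuccessful iterations with a large proximal parameter can outnumber successful ones with $A_k<\alpha_0$ by at most the $\ibar+1$ levels between $\alpha_0$ and $\alphabar$. Your truncated-exponent potential $\psi_k$ and the telescoping identity make rigorous what the paper states only verbally, and your four-case check of the increments is accurate.
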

\begin{proof}
  The first inequality is trivial.  Next, since a large proximal parameter in iteration $k$ has $A_k$ and $A_{k+1}$ bounded below by $\bar{\alpha}$, the number of large proximal parameters for which $A_k$ gets decreased is bounded by the number of large proximal parameters with $A_k < \alpha_0$ for which $A_k$ gets increased, plus $\ibar+1$, which is the number of unsuccessful steps it takes to decrease the proximal parameter from $\alpha_0$ to $\bar{\alpha}$. This establishes the second inequality above.
%
  \qed
\end{proof}

The next result is the same as~\cite[Corollary 3.3]{jin2024high}.

\begin{lemma}\label{lem:sum-LTheta}
  For all $t\in\N{}\setminus\{0\}$, it holds that
$
\sum_{k=0}^{t-1} L_k \Theta_k \geq \frac{1}{2} \( \sum_{k=0}^{t-1} L_k - (\ibar+1)\).
$
\end{lemma}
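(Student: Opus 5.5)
This follows directly from Lemma~\ref{lem:large}. Since each $L_k$ and $\Theta_k$ is $\{0,1\}$-valued, split the sum of the indicators of large proximal parameters into successful and unsuccessful parts:
\[
\sum_{k=0}^{t-1} L_k = \sum_{k=0}^{t-1} L_k\Theta_k + \sum_{k=0}^{t-1} L_k(1-\Theta_k).
\]
Use the outer inequality of Lemma~\ref{lem:large} to bound the unsuccessful part by
\[
\sum_{k=0}^{t-1} L_k(1-\Theta_k) \leq \sum_{k=0}^{t-1} L_k\Theta_k + (\ibar+1).
\]
Substituting this into the decomposition gives
\[
\sum_{k=0}^{t-1} L_k \leq 2\sum_{k=0}^{t-1} L_k\Theta_k + (\ibar+1).
\]
Rearranging and dividing by two yields the claim:
\[
\sum_{k=0}^{t-1} L_k\Theta_k \geq \tfrac12\Bigl(\sum_{k=0}^{t-1} L_k - (\ibar+1)\Bigr).
\]

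There is no real obstacle here; the content lies entirely in Lemma~\ref{lem:large}. The only check is that $L_k$ is well defined for every $k$, which Lemma~\ref{lem:A-wd} guarantees. This makes the decomposition of $\sum_k L_k$ into successful and unsuccessful iterations exact, not merely an inequality.
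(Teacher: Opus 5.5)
Your proof is correct and is essentially the paper's argument. The paper likewise substitutes the identity $L_k(1-\Theta_k) = L_k - L_k\Theta_k$ into the outer inequality of Lemma~\ref{lem:large} and rearranges.
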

\begin{proof}
Use the identity $L_k(1 - \Theta_k) = L_k - L_k\Theta_k$ in Lemma~\ref{lem:large} and rearrange.
  \qed
\end{proof}

The next result is the same as~\cite[Lemma~3.4]{jin2024high}, with additional details of the proof provided here for enhanced clarity.

\begin{lemma}\label{lem:diff-true-iters}
  Conditioned on $t < T_\epsilon$, we have $\sum_{k=0}^{t-1} (1 - L_k) I_k \leq \sum_{k=0}^{t-1} (1 - L_k)(1 - I_k)$.
\end{lemma}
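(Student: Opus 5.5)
The plan is to follow the counting argument of \cite[Lemma~3.4]{jin2024high}, tracking the proximal parameter only on iterations with small proximal parameter, i.e., those with $L_k = 0$. By Lemma~\ref{lem:A-wd} and Definition~\ref{def:largestep}, $L_k = 0$ means $\max\{A_k,A_{k+1}\}\le\alphabar$. In particular $A_k\le\alphabar$. Since $t < T_\epsilon$ implies $k < T_\epsilon$ for every $k\le t-1$, Lemma~\ref{lem:alg-properties}(ii)(b) applies. So every iteration $k\le t-1$ with $L_k = 0$ and $I_k = 1$ is successful, and $A_{k+1} = \gamma^{-1}A_k$; the cap $\alpha_0$ is inactive because $\gamma^{-1}A_k \le \gamma^{-1}\alphabar = \gamma^{\ibar}\alpha_0 \le \alpha_0$. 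Iterations with $L_k = 0$ and $I_k = 0$ may be successful or not, so $A_{k+1}\in\{\gamma A_k,\gamma^{-1}A_k\}$ there.

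The key step is an up-crossing/down-crossing count on the level $\alphabar$. I would work with the integer exponent $j_k$ defined by $A_k = \gamma^{j_k}\alpha_0$; this is well defined since all updates multiply by $\gamma^{\pm1}$ and the cap only ever returns to $\alpha_0$. Iterations with $L_k = 0$ are exactly the transitions in which both endpoints satisfy $j\ge \ibar+1$. Every such transition changes $j$ by $\pm1$.

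Decompose $\{0,\dots,t-1\}$ into maximal blocks of consecutive indices with $L_k = 0$. Within a block, the sequence $j_k$ starts at some level $j_{\text{start}}\ge \ibar+1$. Consider first a block that is preceded by an iteration with $L = 1$ or starts at $k=0$. Since $A_0 = \alpha_0 > \alphabar$ forces $L_0 = 1$ at the start, such a block is entered from $j = \ibar+1$, which is the smallest admissible level, i.e., $A = \alphabar$. A block ends either at time $t$ or when the next iteration has $L = 1$, which requires returning to $j = \ibar+1$.

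For a block that ends before $t$, the net change in $j$ is therefore zero. Hence the number of decreases of $A$ (steps $j\to j+1$) equals the number of increases. For a block truncated at $t$, the net change is $j_{\text{end}} - j_{\text{start}}\ge 0$, so decreases are at least as many as increases.

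Every $L_k = 0$ iteration with $I_k = 1$ is an increase of $A$. Every decrease of $A$ is an unsuccessful iteration, and by Lemma~\ref{lem:alg-properties}(ii)(b) an unsuccessful iteration with $L_k=0$ must have $I_k = 0$. Summing over blocks then gives
\[
\sum_{k=0}^{t-1}(1-L_k)I_k \le \#\{\text{increases}\} \le \#\{\text{decreases}\} \le \sum_{k=0}^{t-1}(1-L_k)(1-I_k).
\]

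The main obstacle is making the block bookkeeping rigorous. In particular, I need to verify that every block starts at level $j=\ibar+1$, which is where Lemma~\ref{lem:A-wd} and the fact that $\alphabar$ lies on the grid $\gamma^{\,j}\alpha_0$ are used. I also need to confirm that the cap $\min\{\alpha_0,\cdot\}$ never interferes inside a block. If the block framing becomes awkward, the fallback is to prove directly, by induction on $t$, the inequality $\sum_{k<t}(1-L_k)(\text{inc}_k - \text{dec}_k) = \min\{j_t,\ibar+1\}$-type telescoping, i.e., to show $\sum_{k<t}(1-L_k)(\text{dec}_k-\text{inc}_k) = \max\{j_t-(\ibar+1),0\}\ge 0$.
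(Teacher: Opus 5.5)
Your proposal is correct and follows the paper's chain. Among iterations with $L_k=0$ the cap $\alpha_0$ is inactive, so increases of $A_k$ are exactly the successful iterations and decreases are exactly the unsuccessful ones. Your three inequalities are therefore the paper's $\sum(1-L_k)I_k \le \sum(1-L_k)\Theta_k \le \sum(1-L_k)(1-\Theta_k) \le \sum(1-L_k)(1-I_k)$, with the outer two coming from Lemma~\ref{lem:alg-properties}(ii)(b). Your block argument, or equivalently the identity $\sum_{k<t}(1-L_k)\bigl[(1-\Theta_k)-\Theta_k\bigr] = \max\{j_t-(\ibar+1),0\}$, rigorously justifies the middle counting inequality that the paper only asserts in one sentence.
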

\begin{proof}
  It follows from Lemma~\ref{lem:alg-properties}(ii)(b) and the fact that $A_k \leq \alphabar$ when $L_k = 0$ that
  \begin{equation}\label{eq:bd-small-steps-1}
    \sum_{k=0}^{t-1} (1 - L_k) I_k \leq \sum_{k=0}^{t-1} (1 - L_k) \Theta_k \ \ \text{and} \ \ \sum_{k=0}^{t-1} (1 - L_k)(1 - \Theta_k) \leq \sum_{k=0}^{t-1} (1 - L_k)(1 - I_k).
  \end{equation}
  Also, since the number of small proximal parameters where $A_k$ is increased is bounded by the number of small proximal parameters where $A_k$ is decreased,
  \begin{equation}\label{eq:bd-small-steps-2}
    \sum_{k=0}^{t-1} (1 - L_k) \Theta_k \leq \sum_{k=0}^{t-1} (1 - L_k)(1 - \Theta_k).
  \end{equation}
  The claimed result follows by combining~\eqref{eq:bd-small-steps-1} and~\eqref{eq:bd-small-steps-2}.
  \qed
\end{proof}

We can now prove that the probability that a certain event occurs is zero.

\begin{lemma}\label{lem:zero-prob}
  Let $\hat{p} \in (\thalf, 1]$.  Then, for all $t\in\N{}\setminus\{0\}$, it holds that
  \bequationNN
    \P{}\left[ T_{\epsilon} > t,\ \sum_{k=0}^{t-1} I_k \geq \hat{p}t,\ \text{and}\ \sum_{k=0}^{t-1} L_k\Theta_kI_k < \(\hat{p} - \frac{1}{2}\) t - \frac{(\ibar+1)}{2}\right] = 0.
  \eequationNN
\end{lemma}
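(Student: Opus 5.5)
We show that on the event $\{T_\epsilon>t\}\cap\{\sum_{k=0}^{t-1} I_k\ge \hat p t\}$ one always has $\sum_{k=0}^{t-1} L_k\Theta_k I_k \ge (\hat p-\tfrac12)t - \tfrac{\ibar+1}{2}$. The statement is deterministic given the two conditioning events, so no probabilistic tools are needed. We argue realization by realization, combining Lemma~\ref{lem:sum-LTheta} and Lemma~\ref{lem:diff-true-iters}.

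Fix a realization with $T_\epsilon>t$ and $\sum_{k=0}^{t-1} i_k\ge \hat p t$. We split the true iterations by the indicator $L_k$:
\begin{equation*}
\sum_{k=0}^{t-1} I_k = \sum_{k=0}^{t-1} L_k I_k + \sum_{k=0}^{t-1}(1-L_k)I_k .
\end{equation*}
Since $t<T_\epsilon$, Lemma~\ref{lem:diff-true-iters} bounds the second sum by $\sum_{k=0}^{t-1}(1-L_k)(1-I_k)$. Adding $\sum_{k=0}^{t-1}(1-L_k)I_k$ to both sides gives
\begin{equation*}
2\sum_{k=0}^{t-1}(1-L_k)I_k \le \sum_{k=0}^{t-1}(1-L_k),
\end{equation*}
so $\sum_{k=0}^{t-1}(1-L_k)I_k \le \tfrac12\bigl(t-\sum_{k=0}^{t-1}L_k\bigr)$. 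Consequently,
\begin{equation*}
\sum_{k=0}^{t-1} L_kI_k \ge \hat p t - \tfrac12 t + \tfrac12\sum_{k=0}^{t-1} L_k .
\end{equation*}

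Next we lower bound $\sum_{k=0}^{t-1} L_k\Theta_kI_k$. Since $\Theta_kI_k \ge \Theta_k+I_k-1$, we have
\begin{equation*}
\sum_{k=0}^{t-1} L_k\Theta_kI_k \ge \sum_{k=0}^{t-1} L_k\Theta_k + \sum_{k=0}^{t-1} L_kI_k - \sum_{k=0}^{t-1} L_k.
\end{equation*}
Lemma~\ref{lem:sum-LTheta} gives $\sum_{k=0}^{t-1} L_k\Theta_k\ge \tfrac12\sum_{k=0}^{t-1} L_k - \tfrac{\ibar+1}{2}$. Substituting this and the bound on $\sum_{k=0}^{t-1} L_kI_k$ from the previous step, the terms in $\sum_{k=0}^{t-1} L_k$ cancel:
\begin{equation*}
\sum_{k=0}^{t-1} L_k\Theta_kI_k \ge \bigl(\hat p-\tfrac12\bigr)t - \tfrac{\ibar+1}{2}.
\end{equation*}
This contradicts the third event, so the intersection of the three events is empty and has probability zero.

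The main point to check is that Lemma~\ref{lem:diff-true-iters} may be invoked pathwise on the event $\{T_\epsilon>t\}$. Its proof relies on Lemma~\ref{lem:alg-properties}(ii)(b), which requires $k<T_\epsilon$ for every $k\le t-1$, and this holds on that event. We also need the lower bound in Lemma~\ref{lem:sum-LTheta} to hold for all realizations, which it does.
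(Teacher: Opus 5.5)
Your argument is correct and is essentially the paper's proof, which defers to \cite[Lemma~3.5]{jin2024high}: on the event $\{T_\epsilon>t\}$ you argue pathwise, using Lemma~\ref{lem:diff-true-iters} to get $\sum_{k=0}^{t-1}(1-L_k)I_k\le\tfrac12\sum_{k=0}^{t-1}(1-L_k)$, then Lemma~\ref{lem:sum-LTheta} together with $L_k\Theta_kI_k\ge L_k\Theta_k-L_k(1-I_k)$, after which the $\sum_{k=0}^{t-1}L_k$ terms cancel. Your check that Lemma~\ref{lem:diff-true-iters} applies because every $k\le t-1$ satisfies $k<T_\epsilon$ on that event is exactly what makes the statement deterministic.
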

\begin{proof}
The proof follows exactly as the proof for~\cite[Lemma 3.5]{jin2024high}, which uses~\cite[Corollary 3.3]{jin2024high}) (equivalent to our Lemma~\ref{lem:sum-LTheta}), \cite[Lemma 3.4]{jin2024high} (equivalent to our Lemma~\ref{lem:diff-true-iters}), and the notation $U_k$ (analogous to our notation $L_k$). \qed
\end{proof}

We now prove a worst-case iteration complexity result.

\begin{theorem}\label{thm:main}
  Suppose that Assumptions~\ref{ass:general}--\ref{ass:kappaG} hold, let $T_\epsilon$ be defined as in Definition~\ref{def:stoppingtime}. Then, the set of $(s,\phat,t) \in \R{} \times \R{} \times \N{}$ satisfying
  \bsubequations\label{eq:X_knows_better}
  \begin{align}
s &\in \left(0, \(p - \tfrac{1}{2}\) h(\bar{\alpha}) - \omega(\epsilonbar_f, 2\epsilonbar_f)\), \label{s-in}\\
\hat{p} &\in \( 
\frac{1}{2} + \frac{\omega(\epsilonbar_f, 2\epsilonbar_f) + s}{h(\bar{\alpha})}, p 
\), \ \ \text{and} \label{phat-in}\\
t &\geq \that := 
\frac{\Wbar_0/h(\bar{\alpha}) + (\ibar+1)/2}
{
\hat{p} - 
\big(\frac{1}{2} +
\frac{\omega(\epsilonbar_f, 2\epsilonbar_f) + s}{h(\bar{\alpha})}\big)}
\label{t-geq}
\end{align}
  \esubequations
is nonempty, where $\Wbar_0 := \tau_{-1}\big(f(x_0)-f_{\inf}+r(x_0)-r_{\inf}\big) + \|c(x_0)\|_2 \geq W_0$. Moreover, for all $(s,\phat,t)$ satisfying~\eqref{eq:X_knows_better}, it holds that
\begin{equation}\label{eq:subexponential}
\P[T_{\epsilon} \leq t] \geq 
1 - \exp\(
-\frac{(p - \hat{p})^2}{2p^2}  t
\)
- \exp\(
-\min\left\{
\frac{s^2 t}{2(2\tau_{-1}\nu)^2}, 
\frac{s t}{2(2\tau_{-1}b)}
\right\}
\),
\end{equation}
where the right-hand side of~\eqref{eq:subexponential} is positive for all $t$ sufficiently large. Thus, Lemma~\ref{lem:kkt-result} gives for all such $t$ that, with probability at least the right-hand side of~\eqref{eq:subexponential},  Algorithm~\ref{alg:main} computes an iteration $\bar{k} \leq t$ so that $x_{\kbar}$ is an $\epsilon$-KKT point for problem~\eqref{prob:main}.
\end{theorem}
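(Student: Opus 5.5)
Nonemptiness comes first. By Lemma~\ref{lem:alg-properties}(i), $(p-\tfrac12)h(\bar\alpha) - \omega(\epsilonbar_f,2\epsilonbar_f) > 0$, so the interval in \eqref{s-in} is nonempty. For any such $s$ we have $\tfrac12 + (\omega(\epsilonbar_f,2\epsilonbar_f)+s)/h(\bar\alpha) < p$, so the interval in \eqref{phat-in} is nonempty. For any such $\phat$ the denominator of $\that$ is positive, so every integer $t\ge\that$ works. The bound $\Wbar_0\ge W_0$ follows from $\Tcal_0\le\tau_{-1}$ together with $f(x_0)-f_{\inf}+r(x_0)-r_{\inf}\ge0$.

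For the probability bound I will follow the argument of \cite[Theorem~3.6]{jin2024high} and decompose the failure event:
\begin{align*}
\{T_\epsilon>t\}\subseteq{}&\Bigl\{\textstyle\sum_{k=0}^{t-1}I_k<\phat t\Bigr\}\cup\Bigl\{T_\epsilon>t,\ \textstyle\sum_{k=0}^{t-1}I_k\ge\phat t,\ \textstyle\sum_{k=0}^{t-1} L_k\Theta_kI_k<(\phat-\tfrac12)t-\tfrac{\ibar+1}{2}\Bigr\}\\
&\cup\Bigl\{T_\epsilon>t,\ \textstyle\sum_{k=0}^{t-1} L_k\Theta_kI_k\ge(\phat-\tfrac12)t-\tfrac{\ibar+1}{2}\Bigr\}.
\end{align*}
Lemma~\ref{lem:true-total-prob} bounds the first event by $\exp(-(p-\phat)^2t/(2p^2))$. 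Lemma~\ref{lem:zero-prob} shows the second event has probability zero.

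The third event is the main work. On $\{T_\epsilon>t\}$ we have $k<T_\epsilon$ for every $k\le t-1$, so Lemma~\ref{lem:alg-properties}(ii) applies at each step. Part (a) gives $W_{k+1}\le W_k-h(A_k)+\omega(\epsilonbar_f,E_k+E_k^+)$ when $I_k\Theta_k=1$, and part (c) gives $W_{k+1}\le W_k+\omega(\epsilonbar_f,E_k+E_k^+)$ otherwise. When $L_k=1$ we have $A_k\ge\bar\alpha$, and $h$ is increasing, so $h(A_k)\ge h(\bar\alpha)$. Telescoping and using $W_t\ge0$ yields
\[
0\le W_t\le \Wbar_0 - h(\bar\alpha)\textstyle\sum_{k=0}^{t-1} L_k\Theta_kI_k + 2\epsilonbar_f t+\tau_{-1}\textstyle\sum_{k=0}^{t-1}(E_k+E_k^+).
\]
On the third event, inserting the lower bound on $\sum_k L_k\Theta_kI_k$ gives
\[
\tau_{-1}\textstyle\sum_{k=0}^{t-1}(E_k+E_k^+) \ge h(\bar\alpha)\bigl[(\phat-\tfrac12)t-\tfrac{\ibar+1}{2}\bigr]-\Wbar_0-2\epsilonbar_f t.
\]
We have $\omega(\epsilonbar_f,2\epsilonbar_f)=2\epsilonbar_f+2\tau_{-1}\epsilonbar_f$. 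Using $t\ge\that$, which rearranges to $h(\bar\alpha)(\phat-\tfrac12)t-h(\bar\alpha)\tfrac{\ibar+1}{2}-\Wbar_0\ge(\omega(\epsilonbar_f,2\epsilonbar_f)+s)t$, the right-hand side is at least $(2\tau_{-1}\epsilonbar_f+s)t$. Hence the third event is contained in
\[
\Bigl\{\tau_{-1}\textstyle\sum_{k=0}^{t-1}\bigl(E_k+E_k^+-2\epsilonbar_f\bigr)\ge st\Bigr\}.
\]
Since $\E[E_k+E_k^+\mid\Fcal_k]\le2\epsilon_f\le2\epsilonbar_f$, this event is contained in the event that the centered sum $\sum_k\tau_{-1}(E_k+E_k^+-\E[E_k+E_k^+\mid\Fcal_k])$ is at least $st$.

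The main obstacle is concentration of this martingale-difference sum, because the increments are only one-sided subexponential. I would chain the conditional MGF bounds from the proof of Lemma~\ref{lem:alg-properties}(iii): each increment satisfies the MGF bound with parameter $\tau_{-1}\cdot2\nu$ for $\lambda\in[0,1/(2\tau_{-1}b)]$. Iterated conditioning (tower property) then gives
\[
\E\bigl[\exp\{\lambda\,\textstyle\sum_k\cdot\}\bigr]\le\exp\bigl(t\lambda^2(2\tau_{-1}\nu)^2/2\bigr).
\]
Markov's inequality, optimized over $\lambda$ in the allowed range, gives the bound $\exp(-\min\{s^2t/(2(2\tau_{-1}\nu)^2),\,st/(2\cdot2\tau_{-1}b)\})$, which is a Freedman/Bernstein-type inequality. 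A union bound over the three events yields \eqref{eq:subexponential}. Both exponentials decay in $t$, so the right-hand side is positive for large $t$. On $\{T_\epsilon\le t\}$, Lemma~\ref{lem:kkt-result} then supplies the $\epsilon$-KKT iterate $\kbar\le t$.
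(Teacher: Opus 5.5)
Your proof is correct. It uses the same ingredients as the paper: Lemmas~\ref{lem:true-total-prob}, \ref{lem:zero-prob}, and \ref{lem:alg-properties}(ii), the telescoped bound on $W_t$, the rearranged condition \eqref{t-geq}, and the MGF/Markov concentration bound. What differs is how you split the failure event.

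The paper first splits $\{T_\epsilon>t\}$ by whether the average of $\omega(\epsilonbar_f,E_k+E_k^+)$ exceeds $\omega(\epsilonbar_f,2\epsilonbar_f)+s$. The large-noise part is the event $\Acal$, handled by concentration. It then splits the small-noise part by the size of $\sum_k L_k\Theta_kI_k$. On the piece $\Bcal_2$ (small noise, many good iterations) it derives $W_t\le 0$, hence $W_t=0$. It then needs a separate degenerate-case argument to contradict $T_\epsilon>t$: $c(X_t)=0$, $V_t=0$, $X_t$ is a minimizer, LICQ holds via Assumption~\ref{ass:strong-licq}, $U_t^{\mathrm{true}}=0$, and complementarity gives $\Psi_t=0$.

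You instead keep the lower bound $W_t\ge 0$ in the telescoped inequality. You show that $T_\epsilon>t$ together with many good iterations already forces $\sum_k \tau_{-1}(E_k+E_k^+-\E[E_k+E_k^+\,|\,\Fcal_k])\ge st$. That event is absorbed directly into the non-strict concentration event, which the paper's Markov argument bounds anyway.

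Your route is shorter: the boundary case $W_t=0$ never arises, so you never need to show that a zero shifted merit value yields $T_\epsilon\le t$. The paper's route instead isolates, as a deterministic statement, that small cumulative noise and many true-successful large-parameter iterations cannot coexist before $T_\epsilon$. Both routes yield the identical bound \eqref{eq:subexponential}.
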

\begin{proof}
  The interval in~\eqref{s-in} is nonempty as a consequence of~\eqref{bound-h-and-p}.  Next, let $s$ be any real number satisfying~\eqref{s-in}, which implies that $0 < s <  \(p - \tfrac{1}{2}\) h(\bar{\alpha}) - \omega(\epsilonbar_f, 2\epsilonbar_f)$. After rearranging the second inequality, one obtains $\half + \tfrac{\omega(\epsilonbar_f,2\epsilonbar_f)+s}{h(\alphabar)} < p$ so that the interval in~\eqref{phat-in} is nonempty.  Finally, let $\phat$ be any real number satisfying~\eqref{phat-in}. The denominator in~\eqref{t-geq} is positive and finite since $\phat$ satisfies~\eqref{phat-in}, and the numerator is nonnegative and finite by the definitions of $\Wbar_0$, $h(\cdot)$, $\alphabar$, and $\ibar$. Therefore, the right-hand side of~\eqref{t-geq} is nonnegative and finite, meaning that all $t$ sufficiently large satisfy~\eqref{t-geq}, which completes the proof of the first claim.

  Next, let $(s,\phat,t)$ satisfy~\eqref{eq:X_knows_better}. By the law of total probability,
  \begin{align*}
    \P[T_{\epsilon} > t]
    =&\ \P\left[ \underbrace{T_\epsilon > t \ \text{and} \ \frac{1}{t}\sum_{k=0}^{t-1} \omega(\epsilonbar_f, E_k+E_k^+) > \omega(\epsilonbar_f,2\epsilonbar_f)+s}_{\Acal} \right] \\
    &\ + \P\left[\underbrace{T_\epsilon > t \ \text{and} \ \frac{1}{t}\sum_{k=0}^{t-1} \omega(\epsilonbar_f,E_k+E_k^+) \leq \omega(\epsilonbar_f,2\epsilonbar_f)+s}_{\Bcal} \right].
  \end{align*}
  Let us now bound $\P[\Acal]$ and $\P{}[\Bcal]$ in order to prove~\eqref{eq:subexponential}.

  First, consider $\P[\Acal]$.  For any $k \in \N{}$, Assumption~\ref{ass:zeroth_order_oracle}, Assumption~\ref{ass:p} (specifically, $u_f \geq 0$), \eqref{eq:subexp-E+}, $M_k = (\Xi_{k,g},\Xi_{k,f},\Xi^+_{k,f})$, and linearity of expectation show that 
  \begin{equation}\label{exp-bounds-complexity}
    \begin{aligned}
      &\ \E_{M_k}[\tau_{-1}(E_k+E_k^+) | \Fcal_k] 
      \leq 2\tau_{-1}\epsilon_f \leq 2\tau_{-1}\epsilonbar_f \ \ \text{and} \\
      &\ \E_{M_k} \left[ \exp \left\{ \lambda\bigl(\tau_{-1}(E_k+E_k^+) - \E_{M_k}[\tau_{-1}(E_k+E_k^+)|\Fcal_k]\bigr)
      \right\} | \Fcal_k \right] \\
      \leq&\ \exp\(\frac{\lambda^2 (2\tau_{-1}\nu)^2}{2}\)\ \ \text{for all}\ \ \lambda \in \left[0, \frac{1}{2\tau_{-1}b}\right].
    \end{aligned}
  \end{equation}
  Next, \eqref{exp-bounds-complexity} and letting $\Ecal_k := \tau_{-1}(E_k+ E_k^+) - \E_{M_k}[\tau_{-1}(E_k+E_k^+) \, | \, \Fcal_k]$ gives 
\begin{equation}\label{eq:bound-A-I}
\begin{aligned}
\P[\Acal]
& \leq
\P{}\left[
\frac{1}{t}\sum_{k=0}^{t-1}
\omega(\epsilonbar_f,E_k+E_k^+)
> \omega(\epsilonbar_f,2\epsilonbar_f) + s
\right] \\
&= \P\left[
\frac{1}{t}\sum_{k=0}^{t-1}
\(2\epsilonbar_f + \tau_{-1}(E_k+E_k^+)\)
> 2\epsilonbar_f + 2\tau_{-1}\epsilonbar_f + s
\right] \\
& = \P\left[
\frac{1}{t}\sum_{k=0}^{t-1} ( \tau_{-1}(E_k + E_k^+) - 2 \tau_{-1}\epsilonbar_f ) > s
\right] \\
&\leq \P\left[
\frac{1}{t}\sum_{k=0}^{t-1} ( \tau_{-1}(E_k + E_k^+) - 
\E_{M_k}[\tau_{-1}(E_k+E_k^+) |  \Fcal_k] )  >  s
\right] \\ 
&= \P\left[\frac{1}{t}\sum_{k=0}^{t-1} \Ecal_k > s \right]
\leq \P\left[\sum_{k=0}^{t-1} \Ecal_k \geq st \right].
\end{aligned}
\end{equation}
Also, it follows from the tower property and~\eqref{exp-bounds-complexity} that 
\begin{align}
\E\!\left[\mathrm{exp}\left(\lambda\sum_{k=0}^{t-1} \Ecal_k\right)\right]  
&= \E\!\left[\E_{M_{t-1}}\left[ \mathrm{exp}\left(\lambda\sum_{k=0}^{t-2} \Ecal_k\right) \mathrm{exp}\left(\lambda\Ecal_{t-1}\right) \Big| \Fcal_{t-1} \right]\right] \label{peel-off-one} \\
&\leq \mathrm{exp}\left( \frac{\lambda^2(2\tau_{-1}\nu)^2}{2}\right) \E\left[  \mathrm{exp}\left(\lambda\sum_{k=0}^{t-2} \Ecal_k\right)  \right] \ \text{for all $\lambda \in \left[0, \frac{1}{2\tau_{-1}b}\right]$.} \nonumber
\end{align}
We can now use the Markov inequality and an iterative application of~\eqref{peel-off-one} to get
\begin{align*}
\P\left[\sum_{k=0}^{t-1} \Ecal_k \geq st \right]
&\leq \P\left[\mathrm{exp}\(\lambda\sum_{k=0}^{t-1}\Ecal_k\) \geq \mathrm{exp}(\lambda st) \right] 
\leq \mathrm{exp}(-\lambda st) \E\!\left[\mathrm{exp}\left(\lambda\sum_{k=0}^{t-1} \Ecal_k\right)\right] \\ 
&\leq \mathrm{exp}(-\lambda st) \mathrm{exp}\left( \frac{t\lambda^2(2\tau_{-1}\nu)^2}{2}\right) \\
&= \mathrm{exp}\(-\lambda st + \frac{t(2\tau_{-1}\nu)^2\lambda^2}{2}\)\ \ \text{for all $\lambda \in \left[0, \frac{1}{2\tau_{-1}b}\right]$}.
\end{align*}
We now combine this result with~\eqref{eq:bound-A-I} to obtain
\begin{equation}\label{ProbA-bound}
\P[\Acal] 
\leq \mathrm{exp}\(-\lambda st + \frac{t(2\tau_{-1}\nu)^2\lambda^2}{2}\) =: h_\Acal(\lambda)
\ \ \text{for all $\lambda \in \left[0, \frac{1}{2\tau_{-1}b}\right]$.}
\end{equation}
We now minimize $h_\Acal(\cdot)$ over all  $\lambda$ in~\eqref{ProbA-bound}. The \emph{unconstrained} minimizer of $h_\Acal$ is
\begin{align*}
\lambda^*_u := \frac{s}{(2\tau_{-1}\nu)^2}.
\end{align*}
\textbf{Case 1.} If $\lambda^*_u < \frac{1}{2\tau_{-1}b}$, i.e., if $s < \frac{(2\tau_{-1}\nu)^2}{2\tau_{-1}b}$, then the minimizer of $h_\Acal$ is $\lambda^* = \lambda^*_u$ with $h_\Acal(\lambda^*) = \mathrm{exp}\(-\frac{s^2 t}{2(2\tau_{-1}\nu)^2}\)$. \\
\textbf{Case 2.} If $\lambda^*_u \geq \frac{1}{2\tau_{-1}b}$, i.e., if $s \geq \frac{(2\tau_{-1}\nu)^2}{2\tau_{-1}b}$, then the \emph{constrained} minimizer of $h_\Acal$ is $\lambda^* = \frac{1}{2\tau_{-1}b}$ with $h_\Acal(\lambda^*) = \exp\(-\frac{st}{2\tau_{-1}b} + \frac{t(\tau_{-1}\nu)^2}{2(\tau_{-1}b)^2}\)$. Also note that since $s \geq \frac{(2\tau_{-1}\nu)^2}{2\tau_{-1}b} = \frac{2(\tau_{-1}\nu)^2}{\tau_{-1}b}$, one has $\frac{(\tau_{-1}\nu)^2}{\tau_{-1}b} \leq \frac{s}{2}$.  Combining these two results shows
\begin{align*}
h_\Acal(\lambda^*)
&= \exp\(-\frac{st}{2\tau_{-1}b} + \frac{t(\tau_{-1}\nu)^2}{2(\tau_{-1}b)^2}\)
= \exp\( -\frac{st}{2\tau_{-1}b} + \frac{t}{2\tau_{-1}b} \cdot \frac{(\tau_{-1}\nu)^2}{\tau_{-1}b} \) \\
&\leq \exp\( -\frac{st}{2\tau_{-1}b} + \frac{st}{4\tau_{-1}b}\) 
= \exp\( -\frac{st}{4\tau_{-1}b} \).
\end{align*}
Combining the above two cases gives 
\bequation\label{eq:bound-A-II}
  \begin{aligned}
\P[\Acal]
&\leq \max\left\{\exp\(-\frac{s^2t}{2(2\tau_{-1}\nu)^2}\), \exp\(-\frac{st}{4\tau_{-1}b}\)\right\} \\
&= \mathrm{exp}\(-\min\left\{\frac{s^2 t}{2(2\tau_{-1}\nu)^2},\frac{st}{2(2\tau_{-1}b)}\right\}\),
  \end{aligned}
  \eequation
which is our desired bound on $\P[\Acal]$.

To bound $\P[\Bcal]$, first, the law of total probability gives
\[
\begin{aligned}
  &\ \P[\Bcal] \\
  =&\ \P\left[
\underbrace{T_{\epsilon}>t, \, \,
\frac{1}{t}\sum_{k=0}^{t-1}
\omega(\epsilonbar_f,E_k+E_k^+)
\leq \omega(\epsilonbar_f,2\epsilonbar_f)+s, \ \text{and} \ 
\sum_{k=0}^{t-1}\Theta_k I_k L_k
<
\(\hat{p}-\frac{1}{2}\)t-\frac{(\ibar+1)}{2}}_{\Bcal_1}
\right] \\
  & + \P\left[
\underbrace{T_{\epsilon}>t, \, \, 
\frac{1}{t}\sum_{k=0}^{t-1}
\omega(\epsilonbar_f,E_k+E_k^+)
\leq \omega(\epsilonbar_f,2\epsilonbar_f)+s, \ \text{and} \ 
\sum_{k=0}^{t-1}\Theta_k I_k L_k
\geq
\(\hat{p}-\frac{1}{2}\)t-\frac{(\ibar+1)}{2}}_{\Bcal_2}
\right].
\end{aligned}
\]
It follows from the definition of $h$ and the fact that $L_k = 1$ implies $A_k \geq \alphabar$ that $L_k = 1$ also implies $h(A_k) \geq h(\alphabar)$. One can combine this fact with Lemma~\ref{lem:alg-properties}(ii)(a) and Lemma~\ref{lem:alg-properties}(ii)(c) to get, for all $k < T_{\epsilon}$, that
\begin{equation}\label{eq:two_case_reduction}
\begin{aligned}
W_{k+1} &\leq W_k - h(\bar\alpha) + \omega(\epsilonbar_f, E_k + E_k^+) \\
  &\leq W_k - h(\bar\alpha) + \omega(\epsilonbar_f, 2\epsilonbar_f) \ \ \text{if $L_kI_k\Theta_k = 1$, and} \\
  W_{k+1} &\leq W_k + \omega(\epsilonbar_f, E_k + E_k^+) \ \ \text{if $L_kI_k\Theta_k = 0$.}
\end{aligned}
\end{equation}
  One can now prove that $\P[\Bcal_2]=0$ by showing that 
\begin{equation}\label{contradiction-firstpart}
T_{\epsilon} > t
\ \ \text{and} \ \
\frac{1}{t}\sum_{k=0}^{t-1}
\omega(\epsilonbar_f, E_k+E_k^+)
\leq \omega(\epsilonbar_f,2\epsilonbar_f)+s
\end{equation}
together imply that
\[
\sum_{k=0}^{t-1}\Theta_k I_k L_k
<
\(\hat{p}-\frac{1}{2}\)t-\frac{(\ibar+1)}{2}.
\]
For a proof by contradiction, suppose that~\eqref{contradiction-firstpart} holds and $\sum_{k=0}^{t-1}\Theta_k I_k L_k
\geq
\(\hat{p}-\frac{1}{2}\)t-\frac{(\ibar+1)}{2}$. This inequality, \eqref{eq:two_case_reduction}, $h(\alphabar) > 0$, and the second inequality in~\eqref{contradiction-firstpart} show that
\begin{align*}
W_t
  =&\ W_0 + \sum_{k = 0}^{t-1} (W_{k+1} - W_k) \\
  =&\ W_0 + \sum_{k = 0}^{t-1} (W_{k+1} - W_k)1\{L_kI_k\Theta_k = 1\} + \sum_{k = 0}^{t-1}(W_{k+1} - W_k)1\{L_kI_k\Theta_k = 0\} \\
  \leq&\ W_0 + \sum_{k=0}^{t-1}\(\omega(\epsilonbar_f, E_k + E_k^+) - h(\bar\alpha)\)1\{L_kI_k\Theta_k = 1\} \\
  &\ + \sum_{k = 0}^{t-1}\omega(\epsilonbar_f,E_k+E_k^+) 1\{L_kI_k\Theta_k = 0\}  \\
  =&\ W_0 - \sum_{k=0}^{t-1}1\{L_kI_k\Theta_k = 1\} h(\bar\alpha) + \sum_{k = 0}^{t-1}\omega(\epsilonbar_f,E_k+E_k^+) \\
  \leq&\ W_0 - \Big[(\phat-\half)t-\tfrac{\ibar+1}{2}\Big]h(\bar\alpha) + t\(\omega(\epsilonbar_f,2\epsilonbar_f) + s\) \\
  =&\ W_0 - \(\(\hat{p} - \frac{1}{2}\)h(\bar\alpha) - \omega(\epsilonbar_f, 2\epsilonbar_f) - s\)t + \frac{(\ibar+1)}{2} h(\bar\alpha) \leq 0,
\end{align*}
where the last inequality follows from~\eqref{t-geq}.
This inequality and the definition of $W_t$ gives $W_t = 0$, which means that $X_t$ is a local minimizer to problem~\eqref{prob:main}; in particular, $c(x_t) = 0$ and $X_t\in\Omega$.  It now follows from~\eqref{eq:delta} that $\Delta_t = 0$ so that $V_t = V_t^c = 0$ (see~\eqref{subprob:trust-region_conditions}). Combining this fact with Assumption~\ref{ass:strong-licq} shows that $[\nabla c(X_t), I_{\Acal_t}^T]^T$ has full row rank, where $\Acal_t = \{i \in [n]: [X_t]_i = 0 \}$. Thus, the LICQ holds for problem~\eqref{prob:main} at $X_t$. Since the LICQ holds at the minimizer $X_t$, one concludes that  $U_t^{\text{true}} = 0$, which in turn implies $S_t^{\text{true}} = V_t + U_t^{\text{true}} = 0$. Combining this fact with the complementary slackness condition (see~\eqref{kkt:complementarity}) for subproblem~\eqref{subprob:tangential} with $k$ replaced by $t$ and $g_k$ replaced by the true gradient $\nabla f(X_t)$, one finds that $Z_t^{\text{true}}$ satisfies $\min\{X_t+V_t+U_t^{\text{true}},-Z_t^{\text{true}}\} = \min\{X_t,-Z_t^{\text{true}}\} = 0$. To summarize, one has $S_t^{\text{true}} = 0$, $c(X_t) = 0$, $V_t = 0$, and $\min\{X_t,-Z_t^{\text{true}}\} = 0$, which means that $T_\epsilon \leq t$, which contradicts the first inequality in~\eqref{contradiction-firstpart}.  Therefore, $\P[\Bcal_2] = 0$.

Next, to bound $\P[\Bcal_1]$, Lemma~\ref{lem:true-total-prob} and Lemma~\ref{lem:zero-prob} give
\begin{align}
  &\ \P[\Bcal_1] \nonumber \\
  \leq&\ \P{}\left[ T_{\epsilon} > t \ \ \text{and} \ \ \sum_{k=0}^{t-1} \Theta_k I_k L_k < \( \hat{p} - \frac{1}{2} \) t - \frac{(\ibar+1)}{2} \right] \nonumber \\
  =&\ \P{}\left[ T_{\epsilon} > t, \,\, \sum_{k=0}^{t-1} \Theta_k I_k L_k < \(\hat{p} - \frac{1}{2} \) t - \frac{(\ibar+1)}{2}, \ \ \text{and} \ \ \sum_{k=0}^{t-1} I_k < \hat{p} t \right] \nonumber \\
  &\ + \P{}\left[ T_{\epsilon} > t, \, \, \sum_{k=0}^{t-1} \Theta_k I_k L_k < \(\hat{p} - \frac{1}{2} \) t - \frac{(\ibar+1)}{2}, \ \ \text{and} \ \ \sum_{k=0}^{t-1} I_k \geq \hat{p}t \right] \nonumber \\
  \leq&\ \P{}\left[ \sum_{k=0}^{t-1} I_k < \hat{p}t \right] + \P{}\left[ T_{\epsilon} > t, \,\, \sum_{k=0}^{t-1} \Theta_k I_k L_k < \( \hat{p} - \frac{1}{2} \) t - \frac{(\ibar+1)}{2}, \ \ \text{and} \ \ \sum_{k=0}^{t-1} I_k \geq \hat{p} t \right] \nonumber \\
  \leq&\ \mathrm{exp}\(-\frac{(p - \hat{p})^2}{2p^2} t\) + 0 = \mathrm{exp}\(-\frac{(p - \hat{p})^2}{2p^2} t\). \label{bound-probB1}
\end{align}

Finally, one can use $\P{}[\Bcal_2] = 0$ to obtain 
\begin{align*}
\P[T_\epsilon \leq t]
&= 1 - \P[T_\epsilon > t]
= 1 - \P[\Acal] - \P[\Bcal]
= 1 - \P[\Acal] - \P[\Bcal_1].
\end{align*}
The result~\eqref{eq:subexponential} follows from this equation, \eqref{eq:bound-A-II}, and~\eqref{bound-probB1}. \qed
\end{proof}

The parameter $s$ measures how much ``above expectation" the function evaluation error is allowed to be. From  Lemma~\ref{lem:alg-properties}(ii)(c), we have $W_{k+1} - W_k \leq \omega(\epsilonbar_f, E_k + E_k^+) = 2\epsilonbar_f + \tau_{-1}(E_k + E_k^+)$. The parameter $s$ serves as a bound on how far $\tau_{-1}(E_k + E_k^+)$ exceeds its mean bound $2\tau_{-1}\epsilonbar_f$; a Bernstein-type inequality is then used to control the probability that the cumulative excess exceeds its mean bound over $t$ iterations, yielding the second exponential in~\eqref{eq:subexponential}. Thus, $s$ trades off two effects: a smaller $s$ slows the decay of $\exp(-\min\{s^2 t/(2(2\tau_{-1}\nu)^2), st/(2(2\tau_{-1}b))\})$ but enlarges the admissible range for $\hat{p}$ and lowers the threshold on $t$; and a larger $s$ sharpens the concentration bound but shrinks the range for $\hat{p}$ (vanishing as $s \to (p - \tfrac{1}{2})h(\bar{\alpha}) - \omega(\epsilonbar_f, 2\epsilonbar_f)$) and raises the threshold on $t$. If $s = 0$ and we further assume, for all $k \in \N{}$, that $E_k + E_k^+ \leq 2\epsilon_f$ then the Bernstein-type term in~\eqref{eq:subexponential} is no longer needed, and the concentration argument invoked on $E_k + E_k^+$ is also not needed.  Therefore, in this case, the bound in~\eqref{eq:subexponential}  simplifies to 
\begin{equation*}
\P{}(T_{\epsilon} \leq t) \ge 
1 - \exp\(
-\frac{(p - \hat{p})^2}{2p^2}  t
\),
\end{equation*}
which is a high-probability iteration complexity in the bounded-noise setting.


From Theorem~\ref{thm:main}, we can derive an almost sure termination result and an  expected iteration complexity result, both of which are stated in the next result.

\begin{corollary}\label{cor:convergence_expected_complexity}
Let the suppositions in Theorem~\ref{thm:main} hold and suppose that $\epsilon$ satisfies~\eqref{eq:epsilon_lower_bound}.   
\begin{itemize}
\item[(i)] 
An $\epsilon$-KKT point is computed in a finite number of iterations with probability~$1$.
\item[(ii)] 
The expected number of iterations to reach an $\epsilon$-KKT point is $O(\epsilon^{-2})$.
\end{itemize}
\end{corollary}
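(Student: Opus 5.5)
Both parts follow from Theorem~\ref{thm:main}. Fix $\epsilon$ satisfying~\eqref{eq:epsilon_lower_bound}, and fix one admissible pair $(s,\phat)$ from~\eqref{s-in}--\eqref{phat-in}; the theorem guarantees such a pair exists. Set
\[
c_1 := \frac{(p-\phat)^2}{2p^2} > 0
\quad\text{and}\quad
c_2 := \min\left\{\frac{s^2}{2(2\tau_{-1}\nu)^2}, \frac{s}{2(2\tau_{-1}b)}\right\} > 0 .
\]
Then~\eqref{eq:subexponential} gives, for every integer $t \geq \that$,
\[
\P[T_\epsilon > t] \leq e^{-c_1 t} + e^{-c_2 t}.
\]

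For part~(i), note that $\{T_\epsilon = \infty\} \subseteq \{T_\epsilon > t\}$ for every $t$. Hence $\P[T_\epsilon = \infty] \leq e^{-c_1 t} + e^{-c_2 t}$ for all $t \geq \that$. Letting $t \to \infty$ gives $\P[T_\epsilon<\infty]=1$. Lemma~\ref{lem:kkt-result} then shows that the iterate $x_{T_\epsilon}$ is an $\epsilon$-KKT point, so one is computed in finitely many iterations almost surely.

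For part~(ii), write $\E[T_\epsilon] = \sum_{t\geq 0}\P[T_\epsilon > t]$. Bound each term by $1$ for $t < \lceil \that\rceil$, and by the exponential tail for $t\geq \lceil\that\rceil$. This gives
\[
\E[T_\epsilon] \leq \lceil \that \rceil + \frac{1}{1-e^{-c_1}} + \frac{1}{1-e^{-c_2}}.
\]
It remains to track the $\epsilon$-dependence, which is the main step. The quantities $p$, $\alphabar$, $\ibar$, $\Wbar_0$, $\omega(\epsilonbar_f,2\epsilonbar_f)$, $\tau_{-1}$, $\nu$ and $b$ do not depend on $\epsilon$. By contrast, $h(\alphabar) = C\epsilon^2$, where $C>0$ is independent of $\epsilon$.

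I would not let $(s,\phat)$ depend on $\epsilon$. Instead, choose them using the smallest admissible $\epsilon$, namely any fixed $\epsilon_0$ satisfying~\eqref{eq:epsilon_lower_bound}. Since $h(\alphabar)$ increases with $\epsilon$, the interval~\eqref{s-in} only widens and the lower endpoint of~\eqref{phat-in} only decreases as $\epsilon$ grows. So the same $(s,\phat)$ remains admissible for every $\epsilon \geq \epsilon_0$. With this choice, $c_1$ and $c_2$ are constants independent of $\epsilon$.

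Now look at $\that$. Its numerator is $\Wbar_0/(C\epsilon^2) + (\ibar+1)/2$. Its denominator is $\phat - \tfrac12 - (\omega+s)/(C\epsilon^2)$, which is bounded below by its value at $\epsilon_0$, a positive constant. Hence
\[
\that \leq a\,\epsilon^{-2} + a'
\]
for constants $a$ and $a'$ independent of $\epsilon$.

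Substituting this into the bound on $\E[T_\epsilon]$ gives $\E[T_\epsilon] = O(\epsilon^{-2}) + O(1)$. Because $\epsilon$ is bounded below by $\epsilon_0$, the additive constant can be absorbed, yielding the stated $O(\epsilon^{-2})$. In the regime where the constants are viewed as fixed and $\epsilon$ is the only varying quantity, this is exactly the claimed bound.
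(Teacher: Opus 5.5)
Your part (i) is correct and more direct than the paper's. The paper sums the tail bounds from Theorem~\ref{thm:main} and invokes Borel--Cantelli, whereas your $\{T_\epsilon=\infty\}\subseteq\{T_\epsilon>t\}$ with $t\to\infty$ already suffices.

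For part (ii) your route genuinely differs. The paper chooses $s$ and $\hat p$ depending on $\epsilon$: $s=\sigma h(\bar\alpha)/3$ and $\hat p=\tfrac12+\rho+\tfrac{2\sigma}{3}$, with $\rho=\omega(\bar\epsilon_f,2\bar\epsilon_f)/h(\bar\alpha)$ and $\sigma=(p-\tfrac12)-\rho$. Then $p-\hat p$ and the denominator of $\hat t$ both equal $\sigma/3$, and the paper bounds $\sigma$ below independently of $\epsilon$. You instead freeze one pair $(s,\hat p)$. You observe, correctly, that since $h(\bar\alpha)$ is proportional to $\epsilon^2$, the intervals \eqref{s-in}--\eqref{phat-in} only widen as $\epsilon$ grows. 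That monotonicity argument is sound and avoids the $\rho,\sigma$ bookkeeping.

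The gap is at the threshold. Let $\underline\epsilon$ denote the right-hand side of \eqref{eq:epsilon_lower_bound}. That inequality is strict, so there is no smallest admissible $\epsilon$. Fixing some admissible $\epsilon_0>\underline\epsilon$ gives uniform constants only on $[\epsilon_0,\infty)$; the range $(\underline\epsilon,\epsilon_0)$ is not covered. Nothing in your argument rules out $c_1,c_2\to0$ and $\hat t\to\infty$ as $\epsilon_0\downarrow\underline\epsilon$.

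The missing idea is the factor $\theta\in(0,1)$ in \eqref{eq:epsilon_lower_bound}, which you never use. Write $h_0$ for the value of $h(\bar\alpha)$ at $\epsilon=\underline\epsilon$. The computation in the proof of Lemma~\ref{lem:alg-properties}(i), carried out at $\epsilon=\underline\epsilon$, gives $h_0\ge\omega(\bar\epsilon_f,2\bar\epsilon_f)/(\theta(p-\tfrac12))$. Hence $(p-\tfrac12)h_0-\omega(\bar\epsilon_f,2\bar\epsilon_f)\ge(\theta^{-1}-1)\,\omega(\bar\epsilon_f,2\bar\epsilon_f)>0$.

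So you may pick $(s,\hat p)$ from \eqref{s-in}--\eqref{phat-in} with $h_0$ in place of $h(\bar\alpha)$. By your monotonicity, this pair is admissible for every $\epsilon>\underline\epsilon$. Then $c_1$, $c_2$, and the denominator of $\hat t$ are bounded away from zero uniformly. This is exactly the slack the paper exploits via $\sigma>(1-\theta)(p-\tfrac12)$.

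The paper also bounds $\nu<\bar\epsilon_f/\sqrt{\ln 4}$ and $b<\bar\epsilon_f/\ln 4$, using $p>\tfrac12$ and the definition of $p$. As a result its lower bound on $c_2$ depends only on $\theta$ and $\tau_{-1}$. You do not need this for the statement as posed, but it makes the hidden constant explicit.

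Finally, your closing justification is backwards. A lower bound $\epsilon\ge\epsilon_0$ bounds $\epsilon^{-2}$ from \emph{above}. It therefore cannot absorb the additive constant $a'$, or the $\epsilon$-independent tail terms, into a multiple of $\epsilon^{-2}$; that would require $\epsilon$ bounded above. The paper's final bound $\lceil\hat t\rceil+1+\ldots$ also keeps such additive constants inside its $O(\epsilon^{-2})$. So this is a looseness of the asymptotic claim itself, but the reason you give for dropping them is not valid.
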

\begin{proof}
We first prove part (i).  Fix any $s$ and $\hat{p}$ satisfying~\eqref{s-in} and \eqref{phat-in}, respectively, and let $\that$ be as defined in~\eqref{t-geq}. 
With $A_t := \{T_{\epsilon}> t\}$, Theorem~\ref{thm:main} implies 
\begin{align*}
& \ \sum_{t=\lceil \that\, \rceil}^{\infty} \P{}(A_t) \\
\leq & \ \sum_{t=\lceil \that\, \rceil}^{\infty} \left[\exp\(
-\frac{(p - \hat{p})^2}{2p^2} \, t
\)
+ \exp\(
-\min\left\{
\frac{s^2 t}{2(2\tau_{-1}\nu)^2}, \,
\frac{s t}{2(2\tau_{-1}b)}
\right\}
\)\right] \\
= & \ \sum_{t=\lceil \that\, \rceil}^{\infty} \left\{ {\left[\underbrace{\exp\(
-\frac{(p - \hat{p})^2}{2p^2}
\)}_{<1}\right]}^t
+ {\left[\underbrace{\exp\left(
-\min\left\{
\frac{s^2}{2(2\tau_{-1}\nu)^2}, \,
\frac{s}{2(2\tau_{-1}b)}
\right\}\right)}_{< 1}\right]}^t \right\} \\
< & \ \infty,
\end{align*}
which uses $\hat{p} < p$ and $s > 0$. It follows from this and the Borel-Cantelli lemma that $\P[\lim\sup_{t \to \infty} A_t] = 0$. Thus, $\P[T_\epsilon < \infty] = 1$, meaning that an $\epsilon$-KKT point is found in a finite number of iterations with probability~1.

Next, consider part (ii).  Let us define the following:
\begin{alignat}{2}
\rho &:= \tfrac{\omega(\epsilonbar_f,2\epsilonbar_f)}{h(\alphabar)}, &
\sigma &:= (p-\thalf) - \rho, \label{def:rho-and-sigma} \\ 
M &:= \max\{\sqrt{2},\kappa_T\}, \qquad & 
\chi &:= \min\left\{\tfrac{1}{2(1+\kappa)^2},\tfrac{1-2\eta}{2}\right\}, \label{def:M-and-chi}
\end{alignat}
and note that it follows from~\eqref{def:M-and-chi} that $h(\alphabar)$ from Lemma~\ref{lem:alg-properties} may be written as 
\begin{equation}\label{def:h-equivalent}
h(\alphabar) \equiv \eta_1M^{-2}\chi\taumin\alphabar\epsilon^2.
\end{equation}
It follows from~\eqref{def:h-equivalent}, \eqref{eq:epsilon_lower_bound}, and $\max\left\{(1+\kappa)^2,\frac{1}{1-2\eta}\right\} = \tfrac{1}{2\chi}$ that
\begin{align*}
\frac{M^2h(\alphabar)}{\eta_1\chi\taumin\alphabar}
&= \epsilon^2
> M^2\frac{1}{2\chi}\frac{(4+4\tau_{-1})\epsilonbar_f}{\theta\taumin\alphabar\eta_1(p-1/2)} 
= M^2\frac{1}{\chi}\frac{(2+2\tau_{-1})\epsilonbar_f}{\theta\taumin\alphabar\eta_1(p-1/2)},
\end{align*}
which after simplification and using $\omega(\epsilonbar_f,2\epsilonbar_f) = (2+2\tau_{-1})\epsilonbar_f$, one obtains
\begin{equation}\label{rho-bound-above}
\rho = \frac{\omega(\epsilonbar_f,2\epsilonbar_f)}{h(\alphabar)} < \theta(p-\thalf)
\ \implies \ \sigma > (1-\theta)(p-\thalf) > 0.
\end{equation}
Next, let us choose for $s$, which is required to satisfy~\eqref{s-in}, the specific value
\begin{equation}\label{def:specific-s}
s = \frac{\sigma h(\alphabar)}{3} = \frac{(p-\thalf - \rho)h(\alphabar)}{3} = \frac{(p-\thalf)h(\alphabar) - \omega(\epsilonbar_f,2\epsilonbar_f)}{3},
\end{equation}
which satisfies the requirement in~\eqref{s-in}. With this choice of $s$, let us next choose for $\phat$, which is required to satisfy~\eqref{phat-in}, the specific value
\begin{align*}
\phat &= \half + \rho + \frac{2\sigma}{3}.
\end{align*}
Note that the lower bound in~\eqref{phat-in}, with $s$ chosen as above, satisfies
$$
\half +\frac{\omega(\epsilonbar_f,2\epsilonbar_f)+s}{h(\alphabar)}
= \half +\frac{\omega(\epsilonbar_f,2\epsilonbar_f)+\sigma h(\alphabar)/3}{h(\alphabar)}
= \half + \rho + \frac{\sigma}{3} < \phat,
$$
meaning that $\phat$ as chosen above satisfies the lower bound in~\eqref{phat-in}. On the other hand, one has from the definition of $\sigma$, \eqref{rho-bound-above}, and $\theta\in(0,1)$ that
$$
\phat 
= \half + \rho + \frac{2\sigma}{3}
= \half + \frac{2}{3}\left(p-\half\right) + \frac{1}{3} \rho
< \half + \left(\frac{2}{3} + \frac{1}{3}\theta\right)\left(p-\half\right)
< \half + \left(p-\half\right)
= p
$$
meaning that $\phat$ also satisfies the upper bound in~\eqref{phat-in}. With these specific choices for $s$ and $\phat$, one can now compute the denominator of $\that$ in~\eqref{t-geq} as being equal to
\begin{equation}\label{def:Delta-equiv}
\begin{aligned}
\Delta 
&:= \hat{p} - \left( \frac{1}{2} + \tfrac{\omega(\bar{\epsilon}_f, 2\bar{\epsilon}_f) + s}{h(\bar{\alpha})}\right) 
= \half + \rho + \frac{2\sigma}{3} - \left( \frac{1}{2} + \tfrac{\omega(\bar{\epsilon}_f, 2\bar{\epsilon}_f) + \frac{\sigma h(\alphabar)}{3}}{h(\bar{\alpha})}\right) \\
&=  \half + \rho + \frac{2\sigma}{3} - \left( \frac{1}{2} + \rho + \frac{\sigma}{3}\right)
= \frac{\sigma}{3},
\end{aligned}
\end{equation}
and can similarly calculate, for the choice of $\phat$ above, that
\begin{equation}\label{diff-ps}
p-\phat = p - \half - \rho - \frac{2\sigma}{3} = \sigma -  \frac{2\sigma}{3} = \frac{\sigma}{3}.
\end{equation}
One can now also see from~\eqref{t-geq}, \eqref{def:Delta-equiv}, \eqref{def:h-equivalent}, \eqref{rho-bound-above}, and~\eqref{def:M-and-chi} that
\begin{equation}\label{that-bigO-bound}
\begin{aligned}
\that 
&= \frac{\Wbar_0/h(\alphabar) + (\ibar+1)/2}{\Delta} 
= \frac{3}{\sigma}
\left( \frac{\Wbar_0}{\eta_1M^{-2}\chi\taumin\alphabar\epsilon^2} + \frac{\ibar+1}{2} \right) \\
&< \frac{3}{(1-\theta)(p-\thalf)}
\left( \frac{M^2\Wbar_0}{\eta_1\chi\taumin\alphabar\epsilon^2} + \frac{\ibar+1}{2} \right) \\
&= \frac{3}{(1-\theta)(p-\thalf)}
\left( \frac{\max\{2,\kappa_T^2\}\Wbar_0}{\eta_1\min\left\{\tfrac{1}{2(1+\kappa)^2},\tfrac{1-2\eta}{2}\right\}\taumin\alphabar\epsilon^2} + \frac{\ibar+1}{2} \right)
= O(\epsilon^{-2}).
\end{aligned}
\end{equation}
Next, it follows from Theorem~\ref{thm:main} that, for all $t \geq \that$,
\begin{equation}\label{tail-prob-bound-for-corollary}
\P[T_\epsilon > t] \leq \exp\(-\frac{(p - \hat{p})^2 t}{2p^2}\) + \exp\(-\min\left\{\frac{s^2 t}{2(2\tau_{-1}\nu)^2}, \frac{st}{2(2\tau_{-1}b)}\right\}\).
\end{equation}
Using the tail sum formula for expectation, one has that $\E[T_{\epsilon}] = \sum_{k=0}^{\infty} \P[T_{\epsilon} > k]$, which one can split at the ceiling of $\that$ to obtain
\begin{equation}\label{eq:expectation.stopping.time}
\E[T_{\epsilon}]
= \sum_{k=0}^{\lceil \that\, \rceil} \P[T_{\epsilon} > k] + \sum_{m=1}^\infty \P[T_{\epsilon} > \lceil \that\, \rceil + m]
\leq \lceil \that\, \rceil + 1 + \sum_{m=1}^{\infty} \P[T_{\epsilon} > \lceil \that\, \rceil + m].
\end{equation}
Next, let us define $c_1 := \tfrac{(p - \hat{p})^2}{2p^2} > 0$ and $c_2 := \min\left\{\tfrac{s^2}{2(2\tau_{-1}\nu)^2}, \tfrac{s}{2(2\tau_{-1}b)}\right\} > 0$, which are motivated by~\eqref{tail-prob-bound-for-corollary}.  One can use~\eqref{diff-ps} and~\eqref{rho-bound-above} to obtain
\begin{equation}\label{c1-bound}
c_1 
= \frac{(p-\phat)^2}{2p^2} 
= \frac{(\sigma/3)^2}{2p^2} 
= \frac{\sigma^2}{18p^2} 
> \frac{(1-\theta)^2(p-\thalf)^2}{18p^2} 
=: \underline{c_1}.
\end{equation}
To bound $c_2$, we must compute appropriate bounds for $s$, $\nu$, and $b$.  To bound $s$, it follows from~\eqref{def:specific-s}, \eqref{def:rho-and-sigma}, and~\eqref{rho-bound-above} that
\begin{equation}\label{s-bound-above}
\begin{aligned}
s 
&= \frac{\sigma h(\alphabar)}{3}
= \frac{\sigma}{3}\frac{\omega(\epsilonbar_f,2\epsilonbar_f)}{\rho} \\
&\geq \frac{(1-\theta)(p-\thalf)}{3}\frac{(2+2\tau_{-1})\epsilonbar_f}{\theta(p-\thalf)}
= \frac{(1-\theta)}{3}\frac{(2+2\tau_{-1})\epsilonbar_f}{\theta}.
\end{aligned}
\end{equation}
Next, since $p > 1/2$, we know that $1/2 + p > 1-\delta$, which implies that $1/2 > 1-\delta-p$, which in turn implies that $2 < 1/(1-\delta-p)$, which in turn implies that
\begin{equation}\label{L-bound-below}
L:= \ln(1/(1-\delta-p)) > \ln(2). 
\end{equation}
We can now also obtain from how $p$ is defined in~\eqref{def:p} that
\begin{equation}\label{L-is-min}
L = \ln\left(\frac{1}{1-\delta-p}\right)
\equiv \min\left\{\frac{u_f^2}{2\nu^2}, \frac{u_f}{2b}\right\}. 
\end{equation}
It follows from~\eqref{L-is-min} and the definition of $u_f$ in Assumption~\ref{ass:p} that
$$
L \leq \frac{u_f^2}{2\nu^2} < \frac{\epsilonbar_f^2}{2\nu^2}
\ \ \text{and} \ \ 
L \leq \frac{u_f}{2b} < \frac{\epsilonbar_f}{2b},
$$
which after rearranging and using~\eqref{L-bound-below} gives
$$
\nu < \frac{\epsilonbar_f}{\sqrt{2L}} < \frac{\epsilonbar_f}{\sqrt{2\ln(2)}} =  \frac{\epsilonbar_f}{\sqrt{\ln(4)}}
\ \ \text{and} \ \ 
b < \frac{\epsilonbar_f}{2L} < \frac{\epsilonbar_f}{2\ln(2)} = \frac{\epsilonbar_f}{\ln(4)} .
$$
Using these bounds on $\nu$ and $b$, and the bound on $s$ in~\eqref{s-bound-above}, one has
\begin{align*}
\frac{s}{\nu} &\geq \frac{\sqrt{\ln(4)}(1-\theta)(2+2\tau_{-1})}{3\theta} \ \ \text{and} \ \ 
\frac{s}{b} \geq \frac{\ln(4)(1-\theta)(2+2\tau_{-1})}{3\theta}.
\end{align*}
These bounds may now be combined with the definition of $c_2$ to obtain
\begin{equation}\label{c2-bound}
\begin{aligned}
c_2 
&= \min\left\{\tfrac{s^2}{2(2\tau_{-1}\nu)^2}, \tfrac{s}{2(2\tau_{-1}b)}\right\} \\
&\geq \min\left\{\tfrac{\ln(4)(1-\theta)^2(1+\tau_{-1})^2}{18\tau_{-1}^2\theta^2}, \tfrac{\ln(4)(1-\theta)(1+\tau_{-1})}{6\tau_{-1}\theta}\right\}
=: \underline{c_2}.
\end{aligned}
\end{equation}
Since the exponents in the bound~\eqref{tail-prob-bound-for-corollary} are linear in $t$, one has 
\begin{align*}
  &\ \sum_{m=1}^{\infty} \P[T_{\epsilon} > \lceil \that\, \rceil + m] \\
  \leq&\ \sum_{m=1}^{\infty} \mathrm{exp}(-c_1(\lceil \that\, \rceil + m)) + \sum_{m=1}^{\infty} \mathrm{exp}(-c_2(\lceil \that\, \rceil + m)) \\
  \leq&\ \sum_{m=1}^{\infty} \mathrm{exp}(-c_1 m) + \sum_{m=1}^{\infty} \mathrm{exp}(-c_2 m) = \frac{\mathrm{exp}(-c_1)}{1 - \mathrm{exp}(-c_1)} + \frac{\mathrm{exp}(-c_2)}{1 - \mathrm{exp}(-c_2)} \\
  =& \frac{1}{ \mathrm{exp}(c_1)-1} + \frac{1}{ \mathrm{exp}(c_2)-1}
  \leq \frac{1}{ \mathrm{exp}(\underline{c_1})-1} + \frac{1}{ \mathrm{exp}(\underline{c_2})-1},
\end{align*}
where one has that this upper bound is independent of $\epsilon$ since both $\underline{c_1}$ and $\underline{c_2}$ are independent of $\epsilon$. One can now combine this result with~\eqref{eq:expectation.stopping.time} and~\eqref{that-bigO-bound} to obtain
\begin{align*}
\E[T_{\epsilon}]
&\leq  \lceil \that\, \rceil + 1 + \frac{1}{ \mathrm{exp}(\underline{c_1})-1} + \frac{1}{ \mathrm{exp}(\underline{c_2})-1}
= O(\epsilon^{-2}).
\end{align*}
The desired result follows by combining this inequality with Lemma~\ref{lem:kkt-result}. \qed
\end{proof}

\section{Conclusion}\label{sec.conclusion}
In this paper, we analyzed a proximal-gradient method for solving regularized stochastic optimization problems with general constraints, where exact objective function and gradient evaluations are inaccessible. We established a high-probability complexity bound under assumptions that accommodate noisy tail distributions commonly encountered in machine learning applications. Our analysis yields three main results: (i) explicit high-probability iteration complexity bounds for both subexponential and bounded-noise settings (Theorem~\ref{thm:main} and the discussion following that theorem), (ii) almost-sure termination guarantee that an $\epsilon$-KKT point is found in a finite number of iterations (Corollary~\ref{cor:convergence_expected_complexity}(i)), and (iii) an expected iteration complexity of $O(\epsilon^{-2})$ (Corollary~\ref{cor:convergence_expected_complexity}(ii)). These results extend the high-probability analysis framework found in~\cite{jin2024high} from unconstrained smooth problems to the setting of constrained nonsmooth problems.
\section{Probabilistic tools}



This section contains some probabilistic results  needed to handle the stochastic gradient estimates. We first provide standard closure properties of one-sided subexponential random variables under affine transformations and summation.


\begin{lemma}\label{lem:subexponential-properties}
Let $X$ and $Y$ be one-sided subexponential random variables
with strictly positive parameters $(\nu_1,b_1)$ and $(\nu_2,b_2)$, respectively; i.e.,
\begin{align*}
\mathbb{E}\big[e^{\lambda(X-\mathbb{E}[X])}\big]
&\le e^{\lambda^2\nu_1^2/2}
\ \ \text{for all } \lambda\in[0,1/b_1] \ \text{and} \\
\mathbb{E}\big[e^{\lambda(Y-\mathbb{E}[Y])}\big]
&\le e^{\lambda^2\nu_2^2/2}
\ \ \text{for all } \lambda\in[0,1/b_2].
\end{align*}
Then, the following results hold:
\begin{enumerate}
\item[(i)] For any $a>0$ and $c\in\R{}$, $aX+c$ is one-sided subexponential with parameters $(a\nu_1,\,a b_1)$.
\item[(ii)] $X+Y$ is one-sided subexponential with parameters $\big(\sqrt{2(\nu_1^2+\nu_2^2)},\,\max\{2b_1,2b_2\}\big)$.
\end{enumerate}
\end{lemma}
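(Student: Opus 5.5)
For part (i) I would compute the centered moment generating function of $aX+c$ directly. Since $\mathbb{E}[aX+c] = a\mathbb{E}[X]+c$, the shift cancels and $(aX+c)-\mathbb{E}[aX+c] = a(X-\mathbb{E}[X])$. Hence, for any $\lambda\ge 0$,
\begin{equation*}
\mathbb{E}\big[e^{\lambda((aX+c)-\mathbb{E}[aX+c])}\big] = \mathbb{E}\big[e^{(\lambda a)(X-\mathbb{E}[X])}\big].
\end{equation*}
If $\lambda\in[0,1/(ab_1)]$, then $\lambda a\in[0,1/b_1]$ because $a>0$. The hypothesis on $X$ then bounds the right-hand side by $e^{\lambda^2a^2\nu_1^2/2} = e^{\lambda^2(a\nu_1)^2/2}$, which is exactly the claim with parameters $(a\nu_1,ab_1)$. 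This part is purely routine.

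For part (ii) I would not assume independence, since the paper applies this to $E_k$ and $E_k^+$ under a joint conditional expectation. The centered sum splits as $\bar X+\bar Y$, where $\bar X := X-\mathbb{E}[X]$ and $\bar Y := Y-\mathbb{E}[Y]$. By the Cauchy--Schwarz inequality,
\begin{equation*}
\mathbb{E}\big[e^{\lambda(\bar X+\bar Y)}\big] \le \big(\mathbb{E}[e^{2\lambda \bar X}]\big)^{1/2}\big(\mathbb{E}[e^{2\lambda \bar Y}]\big)^{1/2}.
\end{equation*}
If $\lambda\in[0,1/\max\{2b_1,2b_2\}]$, then $2\lambda\le 1/b_1$ and $2\lambda\le 1/b_2$, so both hypotheses apply at the argument $2\lambda$. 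This gives
\begin{equation*}
\mathbb{E}\big[e^{\lambda(\bar X+\bar Y)}\big] \le \big(e^{4\lambda^2\nu_1^2/2}\big)^{1/2}\big(e^{4\lambda^2\nu_2^2/2}\big)^{1/2} = e^{\lambda^2\cdot 2(\nu_1^2+\nu_2^2)/2},
\end{equation*}
which is the claimed parameter pair $\big(\sqrt{2(\nu_1^2+\nu_2^2)},\max\{2b_1,2b_2\}\big)$.

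The only real obstacle is the lack of independence between $X$ and $Y$, and Cauchy--Schwarz (equivalently, H\"older with exponents $2,2$) handles it. This is the source of both the factor $2$ in the variance proxy and the factor $2$ in the range parameter. I would also note that the same argument works verbatim for conditional expectations given a $\sigma$-algebra, which is the form in which the result is used with $\Fcal_k$.
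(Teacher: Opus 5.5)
Your proposal is correct and follows the paper's proof essentially line for line. Part (i) cancels the shift and rescales $\lambda$ by $a$, and part (ii) applies Cauchy--Schwarz at $2\lambda\in[0,1/b_1]\cap[0,1/b_2]$ to obtain $e^{\lambda^2(\nu_1^2+\nu_2^2)}$ without assuming independence.
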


\begin{proof}
For part (i), we bound the moment-generating function of $aX+c$ as
\[
\mathbb{E}\big[e^{\lambda(aX+c-\mathbb{E}[aX+c])}\big]
= \mathbb{E}\big[e^{a\lambda(X-\mathbb{E}[X])}\big]
\le e^{a^2\lambda^2\nu_1^2/2}
\quad\text{for all } \lambda\in\Big[0,\frac{1}{a b_1}\Big],
\]
where the above inequality uses the fact that $X$ is one-sided
subexponential with parameters $(\nu_1,b_1)$. This proves part (i).

To prove part (ii), let
$\lambda\in\left[0,\min\left\{\tfrac{1}{2b_1},\tfrac{1}{2b_2}\right\}\right] = [0,\tfrac{1}{\max\{2b_1,2b_2\}}]$ so that
$2\lambda\in[0,1/b_1]\cap[0,1/b_2]$. The Cauchy-Schwarz inequality for expectations gives that
\begin{align*}
& \ \mathbb{E}\big[e^{\lambda(X+Y-\mathbb{E}[X+Y])}\big] \\
\leq & \ \Big(\mathbb{E}\big[e^{2\lambda(X-\mathbb{E}[X])}\big]\,
\mathbb{E}\big[e^{2\lambda(Y-\mathbb{E}[Y])}\big]\Big)^{1/2} \\
\leq & \ \Big(e^{4\lambda^2\nu_1^2/2}\,e^{4\lambda^2\nu_2^2/2}\Big)^{1/2}
= e^{\lambda^2(\nu_1^2+\nu_2^2)} \ \text{for all} \ \lambda \in \left[0,\tfrac{1}{\max\{2b_1,2b_2\}}\right], 
\end{align*}
which completes the proof of part (ii). \qed 
\end{proof}



The next result is a one-sided Bernstein's inequality.

\begin{lemma}\label{lem:bernstein}
If $X$ is a random variable satisfying, with strictly positive parameters $(\nu,b)$, 
\begin{equation}\label{app:claim-bernstein}
\E\left[e^{\lambda(X - \E[X])}\right] \leq e^{\frac{\lambda^2\nu^2}{2}}
\quad \text{for all } \lambda \in \left[0, \frac{1}{b}\right],
\end{equation}
then, for any $t \geq 0$, it holds that
\begin{equation}\label{app:1sided-bernstein}
\P(X - \E[X] \geq t)
\leq \exp\left(-\min\left\{\frac{t^2}{2\nu^2}, \frac{t}{2b}\right\}\right).
\end{equation}
\end{lemma}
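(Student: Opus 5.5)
This is the standard Chernoff argument. Fix $t \ge 0$ and $\lambda \in (0, 1/b]$ (the case $t=0$ is trivial since the right-hand side of \eqref{app:1sided-bernstein} equals $1$). Since $x\mapsto e^{\lambda x}$ is increasing, Markov's inequality applied to $e^{\lambda(X-\E[X])}$ together with \eqref{app:claim-bernstein} gives
\begin{equation*}
\P(X - \E[X] \geq t) \leq e^{-\lambda t}\,\E\left[e^{\lambda(X-\E[X])}\right] \leq \exp\left(-\lambda t + \tfrac{\lambda^2\nu^2}{2}\right) =: g(\lambda).
\end{equation*}
The rest of the argument minimizes $g$ over the admissible interval $\lambda\in(0,1/b]$, in the same two-case fashion used for $h_\Acal$ in the proof of Theorem~\ref{thm:main}.

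The unconstrained minimizer of the exponent is $\lambda_u^* = t/\nu^2$.

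\emph{Case 1: $t/\nu^2 \le 1/b$, i.e., $t \le \nu^2/b$.} Choose $\lambda = t/\nu^2$, which is admissible. This gives $g(\lambda) = \exp(-t^2/(2\nu^2))$.

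\emph{Case 2: $t > \nu^2/b$.} Choose $\lambda = 1/b$, which gives the exponent $-t/b + \nu^2/(2b^2)$. Since $\nu^2/b < t$, we have $\nu^2/(2b^2) < t/(2b)$, so the exponent is at most $-t/(2b)$.

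In either case the bound is at most $\max\{e^{-t^2/(2\nu^2)}, e^{-t/(2b)}\} = \exp(-\min\{t^2/(2\nu^2), t/(2b)\})$, which is \eqref{app:1sided-bernstein}.

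The only point requiring care is Case 2: we must check that the constrained choice $\lambda=1/b$ still yields a decay rate of at least $t/(2b)$. This follows directly from the case condition. Note also that only nonnegative $\lambda$ are used, which is why the one-sided hypothesis suffices.
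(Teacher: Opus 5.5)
Your proposal is correct and follows the paper's proof essentially step for step. Both treat $t=0$ separately, apply Markov's inequality to $e^{\lambda(X-\mathbb{E}[X])}$ for admissible $\lambda$, and minimize $\lambda^2\nu^2/2-\lambda t$ over $[0,1/b]$ in two cases: for $t\le\nu^2/b$ take $\lambda=t/\nu^2$, and for $t>\nu^2/b$ take $\lambda=1/b$ and use $\nu^2/(2b^2)<t/(2b)$.
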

\begin{proof}
If $t = 0$, the right-hand side of~\eqref{app:1sided-bernstein} is equal to $1$ so that the bound holds trivially. Therefore, for the remainder of the proof we suppose that  $t > 0$. By combining Markov's inequality and~\eqref{app:claim-bernstein} we find, for all $\lambda\in[0,1/b]$, that
\begin{equation*}
\P(X - \E[X] \geq t)
\leq \P\left(e^{\lambda(X - \E[X])} \geq e^{\lambda t}\right)
\leq e^{-\lambda t}\,\E\!\left[e^{\lambda(X - \E[X])}\right]
\leq \exp\!\left(\frac{\lambda^2\nu^2}{2} - \lambda t\right).
\end{equation*}
To obtain the tightest bound, we minimize
$h_b(\lambda) = \frac{\lambda^2\nu^2}{2} - \lambda t$ over $\lambda \in [0, 1/b]$.  
Since $\nu^2 > 0$, $h_b$ is a strongly convex quadratic function with \emph{unconstrained} minimizer
at $\lambda_u = t/\nu^2 > 0$, which follows from solving $h_b'(\lambda) = \lambda\nu^2 - t = 0$ for $\lambda$. We now consider two cases depending on whether $\lambda_u$ satisfies the constraint $\lambda \leq 1/b$. If $t \leq \nu^2/b$, then $\lambda^* = \lambda_u \in (0, 1/b]$ so that $h_b(\lambda^*) = -t^2/(2\nu^2)$.
On the other hand, if $t > \nu^2/b$, then $\lambda^* = 1/b$.  Since 
$\nu^2/b < t$ implies $\nu^2/(2b^2) < t/(2b)$, it follows that $h_b(\lambda^*) = \tfrac{\nu^2}{2b^2} - \tfrac{t}{b} \leq \tfrac{t}{2b} - \tfrac{t}{b} = -\tfrac{t}{2b}$. Taking both of these cases into account, we find for all $\lambda\in[0,1/b]$ that
\begin{align*}
\P(X - \E[X] \geq t)
&\leq 
\exp\left(\max\{-\tfrac{t^2}{2\nu^2}, -\tfrac{t}{2b}\}\right)
=  \exp\left(-\min\{\tfrac{t^2}{2\nu^2}, \tfrac{t}{2b}\}\right),
\end{align*}
which completes the proof. \qed
\end{proof}

\bibliographystyle{plain}
\bibliography{dpr_fec}

@article{azuma1967weighted,
  title={Weighted sums of certain dependent random variables},
  author={Azuma, Kazuoki},
  journal={Tohoku Mathematical Journal, Second Series},
  volume={19},
  number={3},
  pages={357--367},
  year={1967},
  publisher={Mathematical Institute, Tohoku University}
}

@article{jin2024high,
  title={High probability complexity bounds for adaptive step search based on stochastic oracles},
  author={Jin, Billy and Scheinberg, Katya and Xie, Miaolan},
  journal={SIAM Journal on Optimization},
  volume={34},
  number={3},
  pages={2411--2439},
  year={2024},
  publisher={SIAM}
}

@inproceedings{chen2018constraint,
  title={Constraint-aware deep neural network compression},
  author={Chen, Changan and Tung, Frederick and Vedula, Naveen and Mori, Greg},
  booktitle={Proceedings of the European Conference on Computer Vision (ECCV)},
  pages={400--415},
  year={2018}
}

@article{shi2025momentum,
  title={A momentum-based linearized augmented Lagrangian method for nonconvex constrained stochastic optimization},
  author={Shi, Qiankun and Wang, Xiao and Wang, Hao},
  journal={Mathematics of Operations Research},
  year={2025},
  publisher={INFORMS}
}

@article{boob2023stochastic,
  title={Stochastic first-order methods for convex and nonconvex functional constrained optimization},
  author={Boob, Digvijay and Deng, Qi and Lan, Guanghui},
  journal={Mathematical Programming},
  volume={197},
  number={1},
  pages={215--279},
  year={2023},
  publisher={Springer}
}

@article{boob2025level,
  title={Level constrained first order methods for function constrained optimization},
  author={Boob, Digvijay and Deng, Qi and Lan, Guanghui},
  journal={Mathematical Programming},
  volume={209},
  number={1},
  pages={1--61},
  year={2025},
  publisher={Springer}
}

@article{jin2022stochastic,
  title={A stochastic primal-dual method for a class of nonconvex constrained optimization},
  author={Jin, Lingzi and Wang, Xiao},
  journal={Computational Optimization and Applications},
  volume={83},
  number={1},
  pages={143--180},
  year={2022},
  publisher={Springer}
}

@article{idrees2025constrained,
  title={Constrained stochastic recursive momentum successive convex approximation},
  author={Idrees, Basil M and Arora, Lavish and Rajawat, Ketan},
  journal={IEEE Transactions on Signal Processing},
  year={2025},
  publisher={IEEE}
}

@article{gorbunov2024high,
  title={High-Probability Complexity Bounds for Non-smooth Stochastic Convex Optimization with Heavy-Tailed Noise},
  author={Gorbunov, Eduard and Danilova, Marina and Shibaev, Innokentiy and Dvurechensky, Pavel and Gasnikov, Alexander},
  journal={Journal of Optimization Theory and Applications},
  volume={203},
  number={3},
  pages={2679--2738},
  year={2024},
  publisher={Springer}
}

@inproceedings{li2020highpro,
    author={Li, Xiaoyu and Orabona, Francesco},
    title={A High Probability Analysis of Adaptive {SGD} with Momentum},
    booktitle={Workshop on Beyond First Order Methods in ML Systems at the 37th International Conference on Machine Learning (ICML)},
    year={2020},
    url={https://arxiv.org/abs/2007.14294}
}

@inproceedings{nguyen2018sgd,
  title={{SGD} and {Hogwild!} Convergence Without the Bounded Gradients Assumption},
  author={Nguyen, Lam M. and Nguyen, Phuong Ha and van Dijk, Marten and Richt{\'a}rik, Peter and Scheinberg, Katya and Tak{\'a}{\v{c}}, Martin},
  booktitle={Proceedings of the 35th International Conference on Machine Learning},
  pages={3750--3758},
  year={2018},
  volume={80},
  series={Proceedings of Machine Learning Research},
  publisher={PMLR}
}

@article{dai2025proximal,
  title={A Proximal-Gradient Method for Equality Constrained Optimization},
  author={Dai, Yutong and Qu, Xiaoyi and Robinson, Daniel P},
  journal={SIAM Journal on Optimization},
  volume={35},
  number={4},
  pages={2654--2683},
  year={2025},
  publisher={SIAM}
}

@inproceedings{sadiev2023high,
  title={High-probability bounds for stochastic optimization and variational inequalities: the case of unbounded variance},
  author={Sadiev, Abdurakhmon and Danilova, Marina and Gorbunov, Eduard and Horv{\'a}th, Samuel and Gidel, Gauthier and Dvurechensky, Pavel and Gasnikov, Alexander and Richt{\'a}rik, Peter},
  booktitle={International Conference on Machine Learning},
  pages={29563--29648},
  year={2023},
  organization={PMLR}
}

@article{cutkosky2021high,
  title={High-probability bounds for non-convex stochastic optimization with heavy tails},
  author={Cutkosky, Ashok and Mehta, Harsh},
  journal={Advances in Neural Information Processing Systems},
  volume={34},
  pages={4883--4895},
  year={2021}
}

@inproceedings{jain2019making,
  title={Making the last iterate of {SGD} information theoretically optimal},
  author={Jain, Prateek and Nagaraj, Dheeraj and Netrapalli, Praneeth},
  booktitle={Conference on Learning Theory},
  pages={1752--1755},
  year={2019},
  organization={PMLR}
}

@book{rockafellar1970convex,
  author    = {Rockafellar, R. Tyrrell},
  title     = {Convex Analysis},
  publisher = {Princeton University Press},
  address   = {Princeton, NJ},
  year      = {1970},
  series    = {Princeton Mathematical Series},
  volume    = {28}
}

@article{curtis2025proximal,
  title={A Proximal-Gradient Method for Solving Regularized Optimization Problems with General Constraints},
  author={Curtis, Frank E and Qu, Xiaoyi and Robinson, Daniel P},
  journal={arXiv preprint arXiv:2512.23166},
  year={2025}
}

@book{shapiro2021lectures,
  title={Lectures on stochastic programming: modeling and theory},
  author={Shapiro, Alexander and Dentcheva, Darinka and Ruszczynski, Andrzej},
  year={2021},
  publisher={SIAM}
}

@inproceedings{tomar2014manifold,
  title={Manifold regularized deep neural networks.},
  author={Tomar, Vikrant Singh and Rose, Richard C},
  booktitle={INTERSPEECH},
  pages={348--352},
  year={2014}
}

@article{zhu2019physics,
  title={Physics-constrained deep learning for high-dimensional surrogate modeling and uncertainty quantification without labeled data},
  author={Zhu, Yinhao and Zabaras, Nicholas and Koutsourelakis, Phaedon-Stelios and Perdikaris, Paris},
  journal={Journal of computational physics},
  volume={394},
  pages={56--81},
  year={2019},
  publisher={Elsevier}
}

@article{bertsimas2018robust,
  title={Robust sample average approximation},
  author={Bertsimas, Dimitris and Gupta, Vishal and Kallus, Nathan},
  journal={Mathematical Programming},
  volume={171},
  number={1},
  pages={217--282},
  year={2018},
  publisher={Springer}
}

@inproceedings{liu2024high,
  title={High-probability bound for non-smooth non-convex stochastic optimization with heavy tails},
  author={Liu, Langqi and Wang, Yibo and Zhang, Lijun},
  booktitle={Forty-first International Conference on Machine Learning},
  year={2024}
}

@article{berahas2025sequential,
  title={A sequential quadratic programming method with high-probability complexity bounds for nonlinear equality-constrained stochastic optimization},
  author={Berahas, Albert S and Xie, Miaolan and Zhou, Baoyu},
  journal={SIAM Journal on Optimization},
  volume={35},
  number={1},
  pages={240--269},
  year={2025},
  publisher={SIAM}
}

@inproceedings{mishchenko2020random,
  title={Random Reshuffling: Simple Analysis with Vast Improvements},
  author={Mishchenko, Konstantin and Khaled, Ahmed and Richt{\'a}rik, Peter},
  booktitle={Advances in Neural Information Processing Systems},
  volume={33},
  year={2020}
}

@article{gurbuzbalaban2021random,
  title={Why Random Reshuffling Beats Stochastic Gradient Descent},
  author={G{\"u}rb{\"u}zbalaban, Mert and Ozdaglar, Asuman and Parrilo, Pablo A.},
  journal={Mathematical Programming},
  volume={186},
  pages={49--84},
  year={2021}
}

\end{document}